\def\my@tag@font{\normalsize}
\def\maketag@@@#1{\hbox{\m@th\normalfont\my@tag@font#1}}
\newcommand{\miniscule}{\fontsize{5}{5.5}\selectfont}
\newcommand{\mminiscule}{\fontsize{4}{5}\selectfont}
\begin{document}

\title{Statistical Inference of Constrained Stochastic Optimization via Sketched Sequential Quadratic Programming}

\author{Sen Na \and Michael W. Mahoney\\ \vskip0.4cm
\addr{H. Milton Stewart School of Industrial \& Systems Engineering, Georgia Tech\\ ICSI and Department of Statistics, University of California, Berkeley}\\ 
}


\editor{}

\maketitle

\begin{abstract}
We consider online statistical inference of constrained stochastic nonlinear optimization problems. We apply the Stochastic Sequential Quadratic Programming (StoSQP) method to solve these problems, which can be regarded as applying second-order Newton's method to the Karush-Kuhn-Tucker (KKT) conditions. 
In each iteration, the StoSQP method computes the Newton direction by solving a quadratic program, and then selects a proper \textit{adaptive} stepsize $\baralpha_t$ to update the primal-dual iterate. To reduce dominant computational cost of the method, we \textit{inexactly} solve the quadratic program in each iteration by employing an iterative sketching solver. Notably, the approximation error of the sketching solver need not vanish as iterations proceed, meaning that the per-iteration computational cost does not blow up. For the above StoSQP method, we show that under mild assumptions, the rescaled primal-dual sequence $1/\sqrt{\baralpha_t}\cdot (\bx_t -\bx^\star, \blambda_t - \blambda^\star)$ converges to a mean-zero Gaussian distribution with a nontrivial~covariance matrix depending on the~\mbox{underlying}~\mbox{sketching}~\mbox{distribution}.~To~\mbox{perform}~inference in practice, we also analyze a plug-in covariance matrix estimator. We illustrate the asymptotic \mbox{normality}~\mbox{result}~of~the~method both on benchmark nonlinear \mbox{problems}~in~CUTEst~test~set~and on linearly/nonlinearly constrained regression problems.
 
\end{abstract}

\vskip5pt
\begin{keywords}
constrained stochastic optimization, Newton sketching, online inference,~uncertainty quantification, randomized numerical linear algebra
\end{keywords}

\section{Introduction}

We consider equality-constrained stochastic nonlinear optimization problems of the form
\begin{equation}\label{pro:1}
\min_{\bx\in \mR^d}\;\; f(\bx ) =  \mE_{\P}[F(\bx; \xi)], \quad \text{s.t.} \;\;  c(\bx) = \0,
\end{equation}\vskip-0.25cm
\noindent where $f:\mR^d\rightarrow\mR$ is a stochastic objective function, $F(\cdot;\xi):\mR^d\rightarrow\mR$ is a realization with a random variable $\xi\sim \P$, and $c:\mR^d\rightarrow \mR^m$ provides deterministic equality constraints. Problems of this form appear widely in a variety of applications in statistics and machine learning, including constrained $M$-estimation \citep{Geyer1991Constrained, Geyer1994Asymptotics, Wets1999Statistical}, multi-stage stochastic optimization \citep{Dantzig1993Multi, Veliz2014Stochastic}, physics-informed neural networks \citep{Karniadakis2021Physics, Cuomo2022Scientific}, and algorithmic fairness \citep{Zafar2019Fairness}. In 
practice, the random variable $\xi$ \mbox{corresponds}~to~a~data~\mbox{sample};~$F(\bx; \xi)$~is~the~loss~\mbox{occurred}~at~the sample $\xi$ when using the parameter $\bx$ to fit the model; and $f(\bx)$ is the expected loss. Deterministic constraints are prevalent in real examples, which can encode prior model~\mbox{information},~address identifiability issue, and/or reduce searching complexity.

In this paper, we are particularly interested in performing statistical inference on a (local) primal-dual solution $(\tx,\tlambda)$ of Problem \eqref{pro:1}. To that end, the classical (offline) approach often generates $N$ samples $\xi_1,\ldots,\xi_N\sim\P$ iid, and then solves the corresponding~empirical~risk minimization (ERM) problem: 
\begin{equation}\label{pro:2}
\min_{\bx\in \mR^d}\;\; \hat{f}(\bx) = \frac{1}{N}\sum_{i=1}^{N}F(\bx;\xi_i),\quad \text{s.t.} \;\;  c(\bx) = \0.
\end{equation}
Under certain regularity conditions, we can establish the asymptotic consistency and normality of the minimizer $(\hat{\bx}_N, \hat{\blambda}_N)$ of \eqref{pro:2}, also called \textit{constrained $M$-estimator}, given by
\begin{equation}\label{equ:M:est}
\footnotesize \sqrt{N}\begin{pmatrix}
\hat{\bx}_N - \tx\\
\hat{\blambda}_N - \tlambda
\end{pmatrix}\stackrel{d}{\longrightarrow}\mN\rbr{\begin{pmatrix}
\0\\
\0
\end{pmatrix}, \begin{pmatrix}
\nabla_{\bx}^2\mL^\star & (G^\star)^T\\
G^\star & \0
\end{pmatrix}^{-1}\begin{pmatrix}
\cov(\nabla F(\tx; \xi)) & \0\\
\0 & \0
\end{pmatrix}\begin{pmatrix}
\nabla_{\bx}^2\mL^\star & (G^\star)^T\\
G^\star & \0
\end{pmatrix}^{-1}},
\end{equation}
where $\mL(\bx, \blambda) = f(\bx) + \blambda^Tc(\bx)$ is the Lagrangian function with $\blambda\in\mR^m$ being the dual~variables associated with the constraints, $\nabla_{\bx}^2\mL^\star$ is the Lagrangian Hessian with respect to~$\bx$ evaluated at $(\tx,\tlambda)$, and $G^\star = \nabla c(\tx)\in \mR^{m\times d}$ is the constraints Jacobian. See \cite[Chapter 5]{Shapiro2014Lectures} for the result of \eqref{equ:M:est}, and \cite{Duchi2021Asymptotic} and \mbox{\cite{Davis2024Asymptotic}}~for showing \eqref{equ:M:est} attains the minimax optimality. Numerous methods can be applied to solve constrained ERM \eqref{pro:2}, including (exact) penalty methods, augmented Lagrangian methods,~and sequential quadratic programming (SQP) methods \citep{Nocedal2006Numerical}.

Given the prevalence of streaming datasets in modern problems, offline methods that require dealing with a large batch set in each step are less attractive. It is desirable to design \textit{fully online methods}, where only a \textit{single} sample is used in each step, and~to~\mbox{perform}~\mbox{\textit{online}}~\textit{statistical inference} by leveraging those methods. Without constraints, one can apply stochastic gradient descent (SGD) and its many variates, whose statistical properties (e.g., \mbox{asymptotic} normality) have been comprehensively studied from different aspects \citep{Robbins1951stochastic, Kiefer1952Stochastic,Polyak1992Acceleration, Ruppert1988Efficient}. However, unlike~solving unconstrained stochastic programs, there are limited methods proposed for constrained stochastic programs \eqref{pro:1} that enable online statistical inference. We refer to Section \ref{sec:1.4} for a detailed literature review. One potential exception is the \mbox{projection-based}~SGD~\mbox{recently}~studied in \cite{Duchi2021Asymptotic, Davis2024Asymptotic}. Although the literature has shown~that~projected methods also exhibit asymptotic normality, there are two major concerns when applying these methods for practical statistical inference. 
\begin{enumerate}[label=(\alph*),topsep=5pt,leftmargin=0.8\parindent,labelsep=2pt]
\setlength\itemsep{0.0em}
\item It is unclear how to online estimate the limiting covariance matrix based on the \mbox{projected}~{primal} iterates.~{Due to the absence of dual update, the joint primal-dual normality as in \eqref{equ:M:est}~is not (at least, immediately) achievable for projected methods. Even for the primal normality, the covariance matrix still depends on the dual solution through~the~\mbox{Lagrangian}~\mbox{Hessian} (cf. \eqref{equ:M:est}).~However, due to intrinsic objective noise, simply using primal iterates and optimality conditions to solve for the dual solution does not yield a consistent dual estimator for the underlying plug-in covariance estimation. One possible \mbox{resolution}~is~to~draw~\mbox{inspiration} from long-run variance estimations of stationary processes, and design batch-means~\mbox{covariance}~estimators that utilize only the projected primal iterates themselves.} That said, this approach is highly nontrivial for projected methods {(because of the non-stationarity)},~as~\mbox{studied}~in~the context of vanilla SGD methods \citep{Chen2020Statistical, Zhu2021Online}.

\item There are prevalent scenarios where the projection operator becomes intractable. For~example, when the constraint function $c(\bx)$ is nonlinear and nonconvex, as in physics-informed neural networks (cf.~Section \ref{sec:1.5}), the projection onto the manifold $\{\bx\in\mR^d: c(\bx) = \0\}$~is generally intractable. Additionally, if we only have local information about the \mbox{constraint} function $c(\bx)$ (e.g., function evaluation and Jacobian) at any given point $\bx$, as in CUTEst benchmark nonlinear problems (cf.~Section \ref{sec:5}), the projection operator is not computable~either, which requires a global characterization of the constraint set.

\end{enumerate}

To perform online statistical inference of Problem \eqref{pro:1} without relying on projections, we draw inspiration from a recent growing series of literature in numerical optimization, which develops various \textit{stochastic sequential quadratic programming} (StoSQP) methods for \eqref{pro:1}. The SQP methods can be regarded as second-order Newton's \mbox{methods}~\mbox{applied} to the Karush-Kuhn-Tucker (KKT) conditions. 
In particular, the StoSQP methods \mbox{compute}~a~\mbox{stochastic}~\mbox{Newton}~direction in each iteration by solving a quadratic program, whose objective model is estimated using the new sample. Then, the methods select a proper stepsize to achieve a sufficient reduction on the \textit{merit function}, which balances the optimality and feasibility of the iterates. We refer to \cite{Na2022adaptive, Na2023Inequality, Berahas2021Sequential, Berahas2023Stochastic, Fang2024Fully} and references therein~for recent StoSQP designs and their promising performance on various problems. The aforementioned literature established the global convergence of StoSQP methods, where the KKT~residual $\|\nabla\mL(\bx_t,\blambda_t)\|$ converges to zero almost surely or in expectation for any \mbox{initialization}.~However, in contrast to \cite{Duchi2021Asymptotic,Davis2024Asymptotic} studying projection-based methods, these literature~\mbox{overlooked} the \mbox{statistical}~\mbox{properties} and failed~to~quantify the~\mbox{uncertainty} 
inherent in the StoSQP~\mbox{methods},~which is yet~\mbox{crucial}~for~\mbox{applying} these methods~on~\mbox{online}~statistical inference tasks. Thus, we pose the following question:

\vskip5pt
\emph{Can we perform online inference on $(\tx, \tlambda)$ based on the StoSQP iterates, while further reducing the computational cost of existing second-order StoSQP methods?}
\vskip5pt

In this paper, we answer this question by complementing the global convergence guarantees and establishing the local asymptotic properties of existing StoSQP methods. Specifically, we focus on an Adaptive Inexact StoSQP scheme, referred to as \texttt{AI-StoSQP}. By \textit{adaptive} we mean that the scheme inherits the critical merit of numerical StoSQP designs \citep{Berahas2021Sequential, Curtis2021Inexact, Berahas2023Stochastic}, allowing for an adaptive stepsize $\baralpha_t$ for the Newton direction. In other words, we do not compromise the adaptivity of~StoSQP~to~\mbox{establish} the local convergence guarantees. By \textit{inexact} we mean that the~scheme~further reduces~the computational cost of StoSQP methods by~\mbox{applying}~an~\mbox{iterative} sketching solver to \mbox{inexactly} solve the Newton system in each step\hskip0.85pt\citep{Strohmer2008Randomized, Gower2015Randomized, Pilanci2016Iterative, Pilanci2017Newton, Lacotte2020Optimal}. Solving Newton systems is considered the most computationally expensive step of second-order methods; and randomized solvers offer advantages over deterministic solvers by requiring less flops and memory~when~equipped with proper sketching matrices (e.g., sparse sketches). \mbox{Notably},~we~\mbox{perform} a constant~number of sketching steps;~thus,~the per-iteration computational cost remains fixed even near~\mbox{stationarity}.

For the above sketched StoSQP scheme, we quantify its uncertainty consisting of three components: random sampling, random sketching, and random stepsize. We establish~the~asymptotic normality of the primal-dual iterate:
\begin{equation}\label{nequ:1}
1/\sqrt{\baralpha_t}\cdot (\bx_t-\bx^\star, \blambda_t-\blambda^\star)\stackrel{d}{\longrightarrow}\N(0, \Xi^\star),
\end{equation}
where the limiting covariance $\Xi^\star$ solves a Lyapunov equation that depends on the underlying sketching distribution used in the sketching solver.~Let $\tOmega$ denote the \mbox{limiting}~\mbox{covariance}~of~constrained $M$-estimator in \eqref{equ:M:est}.
Our result suggests that if $\baralpha_t\asymp 1/t$, then we have two cases:
\begin{equation}\label{nequ:2}
\tXi = \tOmega \quad \text{for the exact solver},\hskip2cm \tXi\succeq\tOmega \quad \text{for the sketching solver}.
\end{equation}
This implies that (i) if we solve Newton systems exactly, then online StoSQP \mbox{estimator}~(even with adaptive stepsizes) achieves the same estimation efficiency as offline \mbox{$M$-estimator}.~{In~fact, if we focus solely on the primal variables $\bx$, the marginal covariance of $\tXi\hskip-1pt =\hskip-1pt \tOmega$~also~matches~the limiting covariance of online projection-based estimators established in \cite{Duchi2021Asymptotic, Davis2024Asymptotic}, which is known to be \textit{asymptotic minimax optimal} (see \mbox{Remark}~\ref{rem:2}).}~(ii) If we solve Newton systems inexactly, then the~{sketching}~solver~hurts~the~{asymptotic}~\mbox{optimality}~of StoSQP as $\tXi\succeq \tOmega$.~Fortunately, the hurt is tolerable as seen from the bound (cf.~\mbox{Corollary}~\ref{cor:3})
\begin{equation*}
\|\tXi - \tOmega\| \lesssim \rho^\tau \hskip0.3cm \text{for some } \rho\in(0,1), 
\end{equation*}
where $\tau$ is the number of iterations we run for the sketching solver at each step. 
In~\mbox{addition}~to asymptotic normality, we also present some by-product results of independent interest, including the local convergence rate, sample complexity, and the Berry-Esseen bound that quantitatively measures the convergence rate in \eqref{nequ:1}.~To facilitate practical \mbox{inference},~we~also~\mbox{analyze}~a plug-in covariance estimator that can be computed in online fashion. We illustrate our results on benchmark nonlinear problems in CUTEst test set and on~\mbox{linearly/nonlinearly}~constrained regression problems.

\vskip5pt
\noindent\textit{Structure of the paper.} 
We introduce some motivating examples of Problem \eqref{pro:1} and provide a literature review in Section \ref{sec:11}. Then, we introduce \texttt{AI-StoSQP} in Section \ref{sec:2}~and~prove~the global almost sure convergence with iteration complexity in Section \ref{sec:3}. Asymptotic normality with~covariance estimation is established in Section \ref{sec:4}. Experiments and conclusions~are~\mbox{presented}~in Sections \ref{sec:5} and \ref{sec:6}, respectively. We defer all the proofs to the appendices.

\vskip5pt
\noindent\textit{Notation.} Throughout the paper, we use $\|\cdot\|$ to denote $\ell_2$ norm for vectors and spectral norm for matrices. For scalars $a$, $b$, $a\vee b = \max(a,b)$ and $a\wedge b = \min(a,b)$. We use $O(\cdot)$ (or $o(\cdot)$) to denote big (or small) $O$ notation in usual almost sure sense. For a~sequence~of~\mbox{compatible}~matrices $\{A_i\}_i$, we let $\prod_{k=i}^{j}A_k = A_jA_{j-1}\cdots A_i$ if $j\geq i$ and $I$ (the identity matrix) if $j<i$. We use the bar notation, $\bar{(\cdot)}$, to denote algorithmic quantities that are random (i.e., depending~on realized samples), except for the iterates. We reserve the notation $G(\bx)$ to~denote~the~constraints Jacobian, i.e., $G(\bx) = \nabla c(\bx) = (\nabla c_1(\bx),\ldots,\nabla c_m(\bx))^T\in \mR^{m\times d}$.

\section{Applications and Literature Review}\label{sec:11}

We present two motivating examples of \eqref{pro:1} in Section \ref{sec:1.5}, and then review related literature in Section \ref{sec:1.4}.

\subsection{Motivating examples}\label{sec:1.5}

Many statistical and machine learning problems can be cast into the form of Problem \eqref{pro:1}.$\quad$

\begin{example}[Constrained regression problems]\label{exp:1}

Let $\xi_t = (\xi_{\ba_t}, \xi_{b_t})$ be the $t$-th sample, where $\xi_{\ba_t}\in\mR^d$ is the feature vector independently drawn from some multivariate distribution and $\xi_{b_t}$ is the response. We consider different regression models, such as\vskip-0.45cm
\begin{alignat*}{3}
\text{linear models:} \quad & \hskip1.5cm\xi_{b_t} && = \xi_{\ba_t}^T\tx + \epsilon_t \quad \quad  & &\text{ with } \epsilon_t \text{ iid noise},\\
\text{logistic models:} \quad & P\rbr{\xi_{b_t} \mid \xi_{\ba_t}} &&= \frac{\exp\rbr{\xi_{b_t}\cdot \xi_{\ba_t}^T\tx}}{1 + \exp\rbr{\xi_{b_t}\cdot \xi_{\ba_t}^T\tx}}\quad \quad & &\text{ with }  \xi_{b_t} \in\{-1,1\},
\end{alignat*}\vskip-0.1cm
\noindent where $\tx\in \mR^d$ is the true model parameter. For the above models, we define the corresponding loss functions at $\bx$:\vskip-0.55cm
\begin{align*}
\text{linear models:} \quad & F(\bx; \xi_t) = \frac{1}{2}(\xi_{\ba_t}^T\bx - \xi_{b_t})^2,\\
\text{logistic models:} \quad & F(\bx;\xi_t) = \log\rbr{1+\exp(-\xi_{b_t}\cdot\xi_{\ba_t}^T\bx)}.
\end{align*}\vskip-0.1cm
\noindent Then, we can verify that $\tx = \argmin_{\bx} f(\bx) = \argmin_{\bx}\mE[F(\bx; \xi)]$.
In many cases, we access prior information about the model parameters, which is encoded as constraints.~For~\mbox{example},~in portfolio selection, $\bx$ represents the portfolio allocation vector that satisfies $\bx^T\b1 = 1$ (total allocation is 100\%). We may also fix the target percentage on each sector or each region, which is translated into the constraint $A\bx = \bd$. See \cite[(4.3, 4.4)]{Fan2007Variable}, \cite[(2.1)]{Fan2012Vast}, \cite[(1)]{Du2022High} and references therein for such applications. In principle component analysis and semiparametric single/multiple index regressions, we enforce $\bx$ to have a unit norm to address the identifiability issue, leading to a nonlinear constraint $\|\bx\|^2=1$.~We point to \cite{Kaufman1978Method, Kirkegaard1972POSITRONFIT, Sen1979Asymptotic, Nagaraj1991Estimation, Dupacova1988Asymptotic, Shapiro2000asymptotics, Na2019High, Na2021High} for various examples of linearly/nonlinearly~\mbox{constrained}~\mbox{estimation}~\mbox{problems}.~{For~some~\mbox{constrained}~estimation problems, projecting into the feasible set can be intractable.~For instance,~in~\mbox{factor}~\mbox{analysis}, researchers may estimate a covariance matrix $\Sigma$ under so-called \textit{tetrad constraints}: $\Sigma_{i_1i_2}\Sigma_{i_3i_4} - \Sigma_{i_1i_4}\Sigma_{i_2i_3} = 0$ for every set of four distinct variables $\{i_1,i_2,i_3,i_4\}$ \citep{Bollen2000tetrad, Drton2016Wald, Sturma2024Testing}.~For such highly nonlinear \mbox{constraints},~the linear-quadratic approximation performed in SQP can be a promising resolution.$\hskip3.5cm$
}

\end{example}

\begin{example}[Physics-informed machine learning]
	
Recent decades have seen machine~learning (ML) making significant inroads into science. The major task in ML is to learn an unknown mapping $\bz(\cdot): \mA\rightarrow\mB$ from data that can perform well in the downstream tasks.~Since $\bz(\cdot)$ is infinite-dimensional, one key step in ML is to use neural networks (NNs) to parameterize $\bz(\cdot)$ as $\bz_{\bx}(\cdot)$, and learn the optimal weight parameters $\bx\in \mR^d$ instead (called function approximation). One of the trending topics in ML now is physics-informed ML, where one requires $\bz(\cdot)$ to obey some physical principles that are often characterized by partial differential equations (PDEs) \citep{Karniadakis2021Physics, Cuomo2022Scientific}.~In such applications, we can use the squared loss function, defined for the $t$-th sample $\xi_t = (\xi_{\ba_t},\xi_{\bb_t})\in \mA\times\mB$~as \vskip-0.15cm
\begin{equation*}
F(\bx; \xi_t) = \frac{1}{2}\rbr{\bz_{\bx}(\xi_{\ba_t}) - \xi_{\bb_t}}^2.
\end{equation*}
Here, $\xi_{\ba_t}$ is NN inputs that can be spatial and/or temporal coordinates; $\xi_{\bb_t}$ is measurements that can be speed, velocity, and temperature, etc; and $\bz_{\bx}\in\mC^\infty(\mA, \mB)$ is NN architecture. Let $\mF: \mC^{\infty}(\mA, \mB)\rightarrow \mC^{\infty}(\mA, \mB)$ be the PDE operator, which encodes the underlying physical law (e.g., energy conservation law). We aim to find optimal weights $\tx$ that not only minimize~the mean squared error of observed data, but also satisfy the constraints $\mF(\bz_{\bx}) = \0$. To this~end, we select some leverage points $\{\xi_{\ba_i}'\}_{i=1}^m$ in $\mA$ and impose \textit{deterministic} constraints: 
\begin{equation*}
\mF(\bz_{\bx})(\xi_{\ba_i}') = \0, \quad \forall i = 1,2,\ldots, m.
\end{equation*}
Here, we abuse the notation $\0$ to denote either a zero mapping of $\mC^{\infty}(\mA, \mB)$ or a zero element of $\mB$. For more details on this problem~formulation, see \cite[(2.3)]{Lu2021Physics},~\cite[(2)]{Krishnapriyan2021Characterizing}, and references therein.~{Due to the nonlinearity nature of~NNs,~projection-free methods are desired, and SQP can achieve competitive performance compared~to~penalty~methods and augmented Lagrangian methods \citep{Cheng2024Physics}. 
}

\end{example}

\subsection{Related literature and contribution}\label{sec:1.4}

There are numerous methods for solving constrained optimization problems, such as projection-based methods, penalty methods, augmented Lagrangian methods, and sequential quadratic programming (SQP) methods \citep{Nocedal2006Numerical}. This paper particularly considers solving constrained stochastic optimization problems via Stochastic SQP (StoSQP) methods, which can be regarded as an application of stochastic Newton's method on constrained~problems. \cite{Berahas2021Sequential} designed the very first online StoSQP scheme. At each step, the method selects a suitable penalty parameter of an $\ell_1$-penalized objective; ensures the Newton direction produces a sufficient reduction on the penalized objective; and then selects~an~adaptive stepsize $\beta_t\leq \baralpha_t\leq \eta_t=\beta_t+\chi_t$ based on input sequences~$\beta_t$~and~$\chi_t = O(\beta_t^2)$.~An~alternative StoSQP scheme was then reported in \cite{Na2022adaptive}, where $\baralpha_t$ is selected by performing stochastic line search on the augmented Lagrangian with batch sizes increasing as iteration proceeds. Subsequently, \cite{Curtis2021Inexact, Na2023Inequality, Berahas2023Stochastic, Fang2024Fully} proposed different variates of StoSQP to cope with inequality~constraints, degenerate constraints, etc. These works all proved the global convergence of StoSQP methods --- the KKT residual $\|\nabla\mL(\bx_t,\blambda_t)\|$ converges to zero from any initialization.~However,~they~fall~short of uncertainty quantification and online statistical inference goals.

On the other hand, a growing body of literature leverages optimization procedures~to~facilitate online inference, starting with \cite{Robbins1951stochastic, Kiefer1952Stochastic} and continuing through \cite{Robbins1971convergence, Fabian1973Asymptotically, Ermoliev1983Stochastic}. To study the asymptotic distribution of stochastic gradient descent (SGD), \cite{Ruppert1988Efficient} and \cite{Polyak1992Acceleration} averaged SGD iterates and established the optimal central limit theorem rate. \cite{Toulis2017Asymptotic} designed an implicit SGD method and showed the asymptotics of averaged implicit SGD iterates. \cite{Li2018Statistical}~\mbox{designed}~an~\mbox{inference}~\mbox{procedure} for constant-stepsize SGD by averaging the iterates with recurrent burn-in periods. \cite{Mou2020linear} further showed the asymptotic covariance of constant-stepsize SGD with Ployak-Ruppert averaging. \cite{Liang2019Statistical} designed a moment-adjusted SGD method and provided non-asymptotic results that characterize the statistical distribution as the batch size of each step tends to infinity. \cite{Chen2020Statistical} and \cite{Zhu2021Online} proposed different covariance matrix estimators constructed by grouping SGD iterates. Additionally, \cite{Chen2021First} designed a distributed method for the inference of non-differentiable~convex problems; \cite{Roy2023Online} analyzed a batch-mean covariance estimator under a Markovian sampling setup; and \cite{Duchi2021Asymptotic} and \cite{Davis2024Asymptotic} applied projection-based SGD methods for the inference of inequality-constrained convex~problems.~The~aforementioned literature all studied first-order methods with deterministic stepsizes.

The asymptotics of second-order Newton's methods for unconstrained problems have recently been investigated. \cite{Bercu2020Efficient} designed an online Newton's method for logistic regression, and \cite{Boyer2023asymptotic} generalized that method to general regression problems. Compared to first-order methods that often consider~\mbox{averaged}~\mbox{iterates}~and/or exclude the stepsize $1/t$ due to technical challenges, both works showed the normality of the \textit{last} iterate with $1/t$ stepsize.~However, those analyses are not applicable to our~study~for~two~reasons. 
First, they studied unconstrained regression problems with objectives in the form~$F(\bx^T\xi)$, resulting in objective Hessians owning rank-one updates that cannot be employed for our general problem \eqref{pro:1}. 
Second, they solved Newton systems exactly and utilized $1/t$ deterministic stepsize. In contrast, we use a randomized sketching solver to solve~\mbox{Newton}~\mbox{systems}~\mbox{inexactly} to reduce the computational cost associated with higher-order methods, along with an~\mbox{adaptive} random stepsize inspired by numerical designs in \cite{Berahas2021Sequential}. Both of these components affect the uncertainty quantification and lead to a different normality result (cf.~\eqref{nequ:2}). 
To our knowledge, this is the first work that performs online inference by taking into account not only the randomness of samples but also the randomness of computation (i.e., sketching and stepsize); the latter is particularly important for making second-order methods~computationally promising.

{ 
We briefly review the literature on sketched Newton methods.~Compared to the works~\mbox{below}, the sketching step in StoSQP is only a subroutine for solving linear-quadratic programs; a complete method also involves merit function reduction and stepsize selection (here, stepsize refers to that of StoSQP rather than the sketching solver). This paper focuses~on~uncertainty quantification and statistical inference of online (sketched) Newton methods, which differs significantly from the following literature that focuses on design and convergence of sketched Newton methods. In particular, for many (regression) problems, the objective Hessian can be expressed as $\Hb = AA^T\in\mR^{d\times d}$ with a data matrix $A\in\mR^{d\times n}$ and $n\geq d$. Then, one can generate a sketch matrix $S\hskip-2pt \in\hskip-2pt\mR^{n\times s}$ and compute the approximate Hessian $\hat{\Hb}\hskip-2pt = \hskip-2pt ASS^TA^T$.~{\cite{Pilanci2016Iterative}} developed an iterative Hessian sketch algorithm for solving least-squares problems $\min_{\bx} \|A\bx-b\|^2$ (subject to convex constraints). 
The authors sketched only the data matrix~$A$ rather than~both~the~data~\mbox{matrix} $A$ and vector~$b$,~and~\mbox{established}~a~high-probability convergence result.~\cite{Lacotte2020Optimal}~later~\mbox{extended} this study by \mbox{showing}~the~\mbox{optimal}~\mbox{stepsize}~and convergence rate for Haar sketches.~\cite{Pilanci2017Newton} designed a sketched Newton method that approximates the Hessian using the Johnson–Lindenstrauss transform.~Building on this, \cite{Agarwal2017Second, Derezinski2019Distributed, Derezinski2020Debiasing, Derezinski2020Precise, Derezinski2021Newton, Lacotte2021Adaptive} introduced various sketching methods to explore the trade-off~between the computational cost of $\hat{\Hb}$ and the convergence rate of the~algorithm.~In~addition~to the above series of literature, another type of sketched Newton method~is~based~on Sketch-and-Project framework, where one approximates a generic Hessian inverse $\Hb^{-1}$ by $S(S^T\Hb S)^\dagger S^T$ for a sketch matrix $S\in\mR^{d\times s}$.~See \cite{Strohmer2008Randomized, Gower2015Randomized, Luo2016Efficient, Doikov2018Randomized, Gower2019RSN, Derezinski2024Sharp} and references therein for the convergence properties of this family of methods.$\hskip1.5cm$

Compared to deterministic methods for solving Newton systems, such as conjugate~gradient and broad preconditioned Krylov (or minimal residual) methods, randomized sketching methods may behave better in terms of improved convergence rates and range of convergence, while requiring less computation and memory (when using suitable sketches)~to~be~\mbox{scalable} and parallelizable \citep{Gower2016Sketch}.~We refer to \cite{Hong2023Constrained} for an empirical~demonstration of the advantages of sketching solvers over deterministic solvers in the context~of~SQP~methods.\;\;\;

}

\section{Adaptive Inexact StoSQP Method}\label{sec:2}

Let $\mL(\bx,\blambda) = f(\bx) + \blambda^Tc(\bx)$ be the Lagrangian function of \eqref{pro:1}, where $\blambda\in\mR^m$ is~the~dual~vector.~Under certain constraint qualifications (introduced later), a necessary condition~for~$(\bx^\star, \blambda^\star)$ being a local solution to \eqref{pro:1} is the KKT conditions: $\nabla\mL^\star = (\nabla_{\bx}\mL^\star, \nabla_{\blambda}\mL^\star) = (\0,\0)$.

\texttt{AI-StoSQP} applies Newton's method to the equation $\nabla\mL(\bx, \blambda) = \0$, involving three~steps: estimating the objective gradient and Hessian, (inexactly) solving Newton's system, and~updating the primal-dual iterate. We detail each step as follows. For simplicity, we denote~$c_t = c(\bx_t)$ (similarly, $G_t = \nabla c(\bx_t)$, $\nabla\mL_t = \nabla\mL(\bx_t,\blambda_t)$, etc.).

\vskip5pt
\noindent\textbf{$\bullet$ Step 1:\hskip2.5pt Estimate the gradient and Hessian}.~We realize a sample $\xi_t\sim \P$ and~\mbox{estimate}~{the gradient $\nabla f_t$ and Hessian $\nabla^2 f_t$ of the objective as}
\begin{equation*}
\barg_t = \nabla F(\bx_t; \xi_t)\quad\quad \text{ and }\quad\quad \barH_t = \nabla^2F(\bx_t; \xi_t).
\end{equation*}
Then, we compute three quantities:
\begin{equation*}
\bnabla_{\bx}\mL_t = \barg_t + G_t^T\blambda_t,\quad \bnabla_{\bx}^2\mL_t = \barH_t + \sum_{i=1}^m(\blambda_t)_i\nabla^2c_i(\bx_t), \quad B_t = \frac{1}{t}\sum_{i=0}^{t-1}\bnabla_{\bx}^2\mL_i + \Delta_t.
\end{equation*}
Here, $\bnabla_{\bx}\mL_t$ and $\bnabla_{\bx}^2\mL_t$ are the estimates of the Lagrangian gradient and Hessian with respect to $\bx$, respectively; and $B_t$ is a regularized averaged Hessian used in the quadratic~program~\eqref{equ:QP}. We let $\Delta_t = \Delta(\bx_t, \blambda_t)$ be any regularization term ensuring $B_t$ to be positive definite in~the~null space $\{\bx\in \mR^d: G_t\bx = \0\}$. Note that the average $\sum_{i=0}^{t-1}\bnabla_{\bx}^2\mL_i/t$~can~be~\mbox{updated}~online.~We~explain the matrix $B_t$ in the following remark.

\begin{remark}

We note that {the Lagrangian Hessian average in $B_t$ is over samples $\{\xi_0, \ldots, \xi_{t-1}\}$, meaning that the Hessian estimate $\bar{H}_t$, which depends on the new sample $\xi_t$, will only~be~used~in the $(t+1)$-th iteration.}~Thus, $B_t$ and $\Delta_t$ are deterministic given $(\bx_t, \blambda_t)$.~In addition,~$\Delta_t$~can~simply be Levenberg-Marquardt type regularization of the form $\delta_t I$ with suitably large $\delta_t>0$ \citep{Nocedal2006Numerical}. The Hessian regularization is standard for nonlinear problems, together with linear independence constraint qualification (LICQ, Assumption~\ref{ass:1}),~\mbox{ensuring}~that the quadratic program \eqref{equ:QP} is solvable. For convex problems, we just set $\Delta_t=\0$, $\forall t$. See \cite{Bertsekas1982Constrained, Nocedal2006Numerical} for various regularization approaches. \mbox{Moreover}, \cite{Na2022Hessian} showed that Hessian averaging accelerates the local rate of Newton's method~on unconstrained deterministic problems.

\end{remark}

\noindent\textbf{$\bullet$ Step 2: Solve the quadratic program}. 
With the above estimates, we solve the~quadratic program (QP):
\begin{equation}\label{equ:QP}
\min_{\tDelta\bx_t\in\mR^d} \;\; \frac{1}{2}\tDelta\bx_t^TB_t\;\tDelta\bx_t + \barg_t^T\tDelta\bx_t,\quad \text{s.t.}\;\; c_t +  G_t\tDelta\bx_t = \0.
\end{equation}
For the above QP, the objective can be seen as a quadratic approximation of $F(\bx;\xi)$~at~$(\bx_t;\xi_t)$, and the constraint can be seen as a linear approximation of $c(\bx)$ at $\bx_t$. It is easy to observe~that solving the above QP is equivalent to solving the following Newton system
\begin{equation}\label{equ:Newton}
\underbrace{\begin{pmatrix}
B_t & G_t^T\\
G_t & \0
\end{pmatrix}}_{K_t}\underbrace{\begin{pmatrix}
\tDelta\bx_t\\
\tDelta\blambda_t
\end{pmatrix}}_{\tbz_t} = -\underbrace{\begin{pmatrix}
\bnabla_{\bx}\mL_t\\
c_t
\end{pmatrix}}_{\bnabla\mL_t},
\end{equation}\vskip-0.1cm
\noindent where $K_t$, $\bnabla\mL_t$ are the Lagrangian Hessian and gradient, and $\tbz_t$ is the exact Newton~direction.

Instead of solving the QP \eqref{equ:QP} exactly, we solve it inexactly by an iterative sketching solver. This approach proves more efficient than deterministic solvers, especially when~equipped~with suitable sketching matrices \citep{Strohmer2008Randomized, Gower2015Randomized, Pilanci2016Iterative, Pilanci2017Newton, Lacotte2020Optimal}.~In particular, we generate a~\mbox{random}~sketching matrix $S \in \mR^{(d+m)\times s}$, whose column dimension $s\geq 1$ can also be random, and transform the original large-scale linear system to the sketched, small-scale system as
\begin{equation*}
K_t\bz_t = -\bnabla\mL_t \hskip1cm \Longrightarrow \hskip1cm S^TK_t\bz_t = -S^T\bnabla\mL_t.
\end{equation*}
Clearly, there are multiple solutions to the sketched system, and $\bz_t=\tbz_t$ is one of them.~We~prefer the solution that is closest to the current solution approximation.~That is,~the~\mbox{$j$-th}~iteration of the sketching solver has the form ($\bz_{t,0} = \0$) 
\begin{equation}\label{equ:sap}
\bz_{t,j+1} = \arg\min_{\bz}\|\bz - \bz_{t,j}\|^2\quad\;\;  \text{s.t.} \;\;\;\; S_{t,j}^TK_t\bz = -S_{t,j}^T\bnabla\mL_t, 
\end{equation}
where $S_{t,j} \sim S, \forall j$ are independent and identically distributed and are also independent of $\xi_t$. An explicit recursion of \eqref{equ:sap} is given by
\begin{equation}\label{equ:pseduo}
\bz_{t,j+1} = \bz_{t,j} - K_tS_{t,j}(S_{t,j}^TK_t^2S_{t,j})^\dagger S_{t,j}^T(K_t\bz_{t,j} +\bnabla\mL_t),
\end{equation}
where $(\cdot)^\dagger$ denotes the Moore–Penrose pseudoinverse.~One can let $s=1$ (i.e., using~\mbox{sketching}~vectors) so that $S_{t,j}^TK_t^2S_{t,j}$ reduces to a scalar and the pseudoinverse reduces to the reciprocal.$\quad\quad\;$

We perform $\tau\geq 1$ iterations of \eqref{equ:pseduo} and use
\begin{equation*}
(\barDelta\bx_t, \barDelta\blambda_t) \coloneqq \bz_{t,\tau}
\end{equation*}
as the approximate Newton direction. We emphasize that $\tau$ is independent of $t$; thus,~we~do~not require a vanishing approximation error and blow up the computational cost as $t\rightarrow \infty$.

\begin{remark}
	
A significant difference between randomized solvers and deterministic solvers~is that the approximation error $\|\bz_{t,j}-\tbz_t\|$ of randomized solvers may not be monotonically decreasing as $j$ increases.  This subtlety challenges both inference and convergence analysis. In classical optimization world, it is unanimously agreed that if the search direction is asymptotically close to the exact Newton direction (here $\tbz_t$), then the algorithm will locally behave just like Newton's method with a similar convergence rate.~A~\mbox{precise}~\mbox{characterization} is called the Dennis-Mor\'e condition \citep{Dennis1974characterization}.~In our study, {although~Lemma~\ref{lem:1}~shows~that the expected error $\mE[\|\bz_{t,\tau} - \tbz_t\|\mid \bx_t,\xi_t]$ decays exponentially in $\tau$}, $\bz_{t,\tau}$ can still be far from $\tbz_t$ for any $t$ and large $\tau$ {in the \textit{almost sure} sense. In particular, \cite[Theorem~4.2]{Patel2021Implicit} proved that only a subsequence of $\{\|\bz_{t,\tau} - \tbz_t\|\}_{\tau=0}^{\infty}$ would decrease monotonically~with~the~subsequence indices being also random.}

\end{remark}

\noindent\textbf{$\bullet$ Step 3: Update the iterate with an adaptive stepsize}.~With the direction~$(\barDelta\bx_t, \barDelta\blambda_t) = \bz_{t,\tau}$ from Step 2, we update the iterate $(\bx_t, \blambda_t)$ by an adaptive stepsize $\baralpha_t$:
\begin{equation}\label{equ:update}
(\bx_{t+1}, \blambda_{t+1}) = (\bx_t, \blambda_t) + \baralpha_t\cdot (\barDelta\bx_t, \barDelta\blambda_t).
\end{equation}
In principle, the stepsize $\baralpha_t$ may rely on the random direction $(\barDelta\bx_t, \barDelta\blambda_t)$, so it is also random. We allow using any adaptive stepsize selection schemes but require a safeguard~condition~on~$\baralpha_t$:
\begin{equation}\label{equ:sandwich}
0< \beta_t\leq \baralpha_t \leq \eta_t\quad  \text{ with }\quad  \eta_t \coloneqq \beta_t + \chi_t,
\end{equation}
where $\{\beta_t, \eta_t\}$ are upper and lower bound sequences and $\chi_t$ is the adaptivity gap. We~do~not~require specific stepsize selection schemes beyond the condition \eqref{equ:sandwich} to achieve our \mbox{online} inference goals.~The schemes reported in \cite{Berahas2021Sequential, Berahas2023Stochastic,Curtis2021Inexact}~all~\mbox{adhere}~to the condition \eqref{equ:sandwich}.~See \cite[Lemma 3.6]{Berahas2021Sequential} and \cite[(25,~28)]{Curtis2021Inexact}~for details. Their numerical experiments suggest that adaptive random~stepsizes~\mbox{offer} promising empirical benefits over non-adaptive deterministic stepsizes (i.e., $\chi_t=0$).~For sake of~completeness, we present a selection scheme from \cite{Berahas2021Sequential} in Appendix~\ref{app:1}.$\hskip3cm$

\begin{algorithm}[!t]
\caption{Adaptive Inexact StoSQP Method}\label{alg:1}
\begin{algorithmic}[1]
\State\textbf{Input:} initial iterate $(\bx_0, \blambda_0)$, positive sequences $\{\beta_t, \eta_t\}$, an integer $\tau>0$,~$B_0 = I$;
\For{$t = 0,1,2,\ldots$}
\State Realize $\xi_t$ and compute $\barg_t = \nabla f(\bx_t;\xi_t)$, $\barH_t = \nabla^2 f(\bx_t;\xi_t)$, and $\bnabla_{\bx}^2\mL_t$;
\State Compute the regularized Hessian average $B_t = \frac{1}{t}\sum_{i=0}^{t-1}\bnabla_{\bx}^2\mL_i + \Delta_t$; 
\State Generate sketching matrices $S_{t,j}\sim S$, $\forall j$ iid and iterate \eqref{equ:pseduo} for $\tau$~times;
\State Select any adaptive stepsize $\baralpha_t$ with $\beta_t\leq \baralpha_t\leq \eta_t$, and update the iterate as \eqref{equ:update};
\EndFor
\end{algorithmic}
\end{algorithm}

\vskip4pt 

We combine the above three steps and summarize \texttt{AI-StoSQP} in Algorithm \ref{alg:1}. To end this section, we introduce a filtration notation for later use. We let $\mF_t = \sigma( \{\xi_i, \{S_{i,j}\}_j, \baralpha_i\}_{i=0}^t)$, $\forall t\geq 0$ be the $\sigma$-algebra generated by the random variables $\{\xi_i, \{S_{i,j}\}_j, \baralpha_i\}_{i=0}^t$. Moreover,~we let $\mF_{t-2/3} =  \sigma(\{\xi_i, \{S_{i,j}\}_j, \baralpha_i\}_{i=0}^{t-1} \cup \xi_t)$, $\mF_{t-1/3} = \sigma(\{\xi_i, \{S_{i,j}\}_j, \baralpha_i\}_{i=0}^{t-1}\cup \xi_t\cup\{S_{t,j}\}_j)$, and~have $\mF_{t-1}\subseteq \mF_{t-2/3}\subseteq \mF_{t-1/3} \subseteq \mF_t$. For consistency, $\mF_{-1}$ is the trivial $\sigma$-algebra.
With these~notation, Algorithm \ref{alg:1} has a generating process as follows: given $(\bx_t, \blambda_t)$,~we~first~\mbox{realize}~$\xi_t$~to get {the estimates of} the gradient $\barg_t$ and Hessian $\barH_t$ and derive $\mF_{t-2/3}$; then we generate~$\{S_{t,j}\}_j$ to obtain the inexact Newton direction and derive $\mF_{t-1/3}$; then we select the stepsize $\baralpha_t$ and derive $\mF_t$. 
We also let $(\Delta\bx_t, \Delta\blambda_t)$ be the exact Newton direction~solved~from~\eqref{equ:Newton}~with~$\bnabla_{\bx}\mL_t$~being replaced by $\nabla_{\bx}\mL_t$.

\section{Global Almost Sure Convergence}\label{sec:3}

In this section, we present the global almost sure convergence\footnote{Global convergence in nonlinear optimization refers to the convergence to a stationary point from~any~initialization, in contrast to the convergence to a global solution, which is not achievable without particular~problem structures \citep{Nocedal2006Numerical}. However, they are equivalent for convex problems as~studied for projection-based methods in \cite{Duchi2021Asymptotic, Davis2024Asymptotic}.} for the StoSQP method.~We~show that the KKT residual $\|\nabla\mL_t\|$ converges to zero from any initialization.~This~global~\mbox{convergence} serves as a preliminary result of our inference analysis in Section \ref{sec:4}.

We use an adapted augmented Lagrangian function as the Lyapunov function to show~the convergence, which has two penalty terms of the form 
\begin{equation*}
\mL_{\mu, \nu}(\bx, \blambda) = \mL(\bx, \blambda) + \frac{\mu}{2}\|c(\bx)\|^2 + \frac{\nu}{2}\|\nabla_{\bx}\mL(\bx, \blambda)\|^2,\quad\quad \text{with } \mu, \nu>0.
\end{equation*}
The first penalty term biases the feasibility error; while, in contrast to standard \mbox{augmented} Lagrangian ($\nu=0$), the extra second penalty term biases the optimality error. 
We will~first show that the inner product between the \textit{exact} Newton direction $(\Delta\bx_t, \Delta\blambda_t)$ and the~augmented Lagrangian gradient $\nabla\mL_{\mu, \nu}$, with proper parameters $\mu$ and $\nu$, is sufficiently negative~(Lemma \ref{lem:2}). Thus, the Newton direction is a descent direction of~$\mL_{\mu, \nu}$.~Then,~we~will~show that the~augmented Lagrangian $\mL_{\mu, \nu}$ decreases at each step even with an \textit{inexact} Newton~\mbox{direction}~(Lemma \ref{lem:3}). This implies that the residual $\|\nabla\mL_t\|$ finally vanishes to zero (Theorem \ref{thm:1}).

\subsection{Assumptions and preliminary results}\label{sec:3.1}

We state the following assumptions that are standard and proposed in the optimization~litera- ture \citep{Kushner1978Stochastic,Bertsekas1982Constrained, Nocedal2006Numerical, Na2022adaptive}.

\begin{assumption}\label{ass:1}

We assume the existence of a closed, bounded, convex set $\mX\times \Lambda$ containing the iterates $\{(\bx_t, \blambda_t)\}_t$, such that $f$ and $c$ are twice continuously differentiable over $\mX$.~We~also assume that the Hessian $\nabla^2\mL$ is $\Upsilon_L$-Lipschitz continuous over $\mX\times \Lambda$. In other words,
\begin{equation}\label{Lip:mL}
\|\nabla^2\mL(\bx, \blambda) - \nabla^2\mL(\bx', \blambda')\| \leq\Upsilon_L \|(\bx-\bx', \blambda-\blambda')\|, \quad \forall\; (\bx, \blambda), (\bx', \blambda')\in \mX\times \Lambda.
\end{equation}
Furthermore, we assume that the constraints Jacobian $G_t$ has full row rank with $G_tG_t^T\succeq \gamma_{G} I$ for a constant $\gamma_{G} >0$. Additionally, the regularization $\Delta_t$ ensures that $B_t$ \mbox{satisfies}~\mbox{$\|B_t\|\leq \Upsilon_B$} and $\bx^TB_t\bx\geq \gamma_{RH}\|\bx\|^2$ for any $\bx\in\{\bx\in \mR^d: G_t\bx = \0\}$, for some constants~$\gamma_{RH}, \Upsilon_B>0$.\;\;\;\;
	
\end{assumption}

Assumption \ref{ass:1} assumes $G_t$ has full row rank, which is referred to as the linear~independence constraint qualification (LICQ). LICQ is a common constraint qualification ensuring~the uniqueness of the dual solution, and is also necessary for the inference analysis (see \cite[Assumption B]{Duchi2021Asymptotic} and \cite[Example 2.1]{Davis2024Asymptotic}).~LICQ~and~\mbox{conditions}~on~$B_t$ are critical for SQP methods; they imply QP \eqref{equ:QP} has a unique solution \cite[Lemma 16.1]{Nocedal2006Numerical}. The Lipschitz continuity of the Hessian matrix is also standard for~analyzing Newton's method.

{ 
The bounded iterates condition is commonly assumed in the literature on (stochastic) nonlinear nonconvex  optimization, both for first-order gradient-based methods \citep{Bolte2013Proximal, Song2014Weak, Davis2016sound,Davis2019Stochastic, Atchade2017perturbed,Asi2019Stochastic, Liu2023Statistical} and for second-order SQP methods (\citep[Proposition 4.15]{Bertsekas1982Constrained}, \citep[Theorem 18.3]{Nocedal2006Numerical}).~This assumption ensures that all functions with their gradients and Hessians are bounded over $\mX \times \Lambda$ as long as they are smooth.~Some literature replaces~the 
bounded iterates condition by directly \mbox{imposing}~\mbox{boundedness} on~the~\mbox{gradients}~and~\mbox{Hessians}~of the objective and constraints, although the main use of the condition in the proof is rather~similar \citep{Berahas2021Sequential, Berahas2023Stochastic, Curtis2021Inexact, Ramprasad2022Online, Liu2023Online}.\;\;\;

We provide two justifications for the boundedness condition.~First, in our study, the~StoSQP iterates presumably track a deterministic feasible set $\{\bx \in \mR^d: c(\bx) = \0\}$, so we believe that an unbounded iteration sequence is generally rare especially when the feasible set is bounded. Second, a practical way to enforce the boundedness condition may be through adaptive truncation \citep{Andrieu2005Stability, Liang2010Trajectory}.~Under some~\mbox{conditions}~on~the~Markov~\mbox{transition}~\mbox{kernel} of the iteration sequence, one can show that the truncation occurs only finitely many~times,~ensuring that the convergence and asymptotic behavior are finally not affected by the truncation.$\quad\;\;$
}

\vskip-0.3cm
We also impose bounded moment conditions on the stochastic estimates $\barg_t$ and $\barH_t$.$\qquad\qquad\qquad$

\begin{assumption}\label{ass:2}
We assume $\mE[\barg_t\mid \bx_t] = \nabla f_t$, $\mE[\barH_t \mid \bx_t] = \nabla^2f_t$, and assume the following moment conditions \textit{when needed}: for a constant $\Upsilon_m>0$, 
\begin{subequations}\label{equ:BM}
\begin{align}
\text{gradient}\quad (\text{bounded 2nd moment}):\quad\quad &\mE[\|\barg_t - \nabla f_t\|^2\mid \bx_t]\leq \Upsilon_m, \label{equ:BM:a}\\
(\text{bounded 3th moment}):\quad\quad &\mE[\|\barg_t - \nabla f_t\|^3\mid \bx_t]\leq \Upsilon_m, \label{equ:BM:b}\\
(\text{bounded 4th moment}):\quad\quad &\mE[\|\barg_t - \nabla f_t\|^4\mid \bx_t]\leq \Upsilon_m, \label{equ:BM:c}
\adjintertext{0cm}{0cm}{and}
\text{Hessian}\quad  (\text{bounded 2nd moment}): \quad\quad & \mE[\|\barH_t - \nabla^2 f_t\|^2\mid \bx_t]\leq \Upsilon_m, \label{equ:BM:d}\\
(\text{bounded 2nd moment}):\quad\quad & \mE[\sup_{\bx\in\mX}\|\nabla^2f(\bx;\xi)\|^2] \leq \Upsilon_m. \label{equ:BM:e}
\end{align}
\end{subequations} 
\end{assumption}

We write $\mE[\cdot\mid \bx_t]$ to express out the conditional variable.~It can also be written~as~\mbox{$\mE[\cdot\mid \mF_{t-1}]$}, meaning the expectation is taken over the randomness of sample $\xi_t$. For conditions \eqref{equ:BM}, we do not impose all of them at once, but impose them step by step. In this section, we only require \eqref{equ:BM:a} to show the convergence of $\nabla\mL_t$.~In the next section,~we~\mbox{require}~\mbox{higher-order}~\mbox{moments}~for inference. In fact, \eqref{equ:BM:c} implies \eqref{equ:BM:b}, which implies \eqref{equ:BM:a}, and \eqref{equ:BM:e} implies~\eqref{equ:BM:d}. $\quad\quad\quad$

Assumption \ref{ass:2} is standard for uncertainty quantification of stochastic methods. We~would like to mention that \eqref{equ:BM:e} is also required for the asymptotic analysis of averaged SGD \citep{Chen2020Statistical}, which ensures the Lipschitz continuity of the mapping $\bx\rightarrow \mE[\nabla f(\bx; \xi)\nabla^T f(\bx; \xi)]$, as proved in \eqref{Lip:map}. Please refer to \cite[Assumption 3.2(2) and Lemma 3.1]{Chen2020Statistical} for further discussions. Moreover, \eqref{equ:BM:e} is satisfied by various objectives, such~as~\mbox{logistic}~and~least squares losses in Example \ref{exp:1}, as long as the feature variable $\xi_{\ba}$ has bounded $4$-th moment.\;\;\;\;\;

In terms of the sketching matrices, we need the following assumption.

\begin{assumption}\label{ass:3}
For $t\geq 0$, we assume that the sketching matrices $S_{t,j}\stackrel{iid}{\sim} S$ satisfy
\begin{equation*}
\mE[K_tS(S^TK_t^2S)^\dagger S^TK_t \mid \bx_t,\blambda_t]\succeq \gamma_{S} I\quad \quad \text{ for some } \gamma_{S}>0.
\end{equation*}
\end{assumption}

Assumption \ref{ass:3} is required for sketching solvers to converge in expectation \cite[Theorem 4.6]{Gower2015Randomized}.~This assumption can be easily verified for various~\mbox{sketching}~matrices.~For example, for randomized Kaczmarz method where {$S\in\mR^{(d+m)\times s}$ has $s$~columns~sampled uniformly (without replacement) from the canonical bases} $\{\be_1,\ldots,\be_{d+m}\}$ \citep{Strohmer2008Randomized}, we have $\hskip20cm$\vskip-0.5cm
\begin{multline}\label{nnequ:1}
\mE[K_tS(S^TK_t^2S)^\dagger S^TK_t \mid \bx_t,\blambda_t] \succeq \frac{\mE[K_tS S^TK_t \mid \bx_t,\blambda_t]}{\sigma_{\max}(K_t^2)} \\ = \frac{{s} K_t^2}{(d+m) \sigma_{\max}(K_t^2)} \succeq \frac{{s}I}{(d+m)\kappa (K_t^2)},
\end{multline}
where {$\sigma_{\max}(K_t^2)$ denotes the largest singular value (which is the same as the largest eigenvalue in this case) of $K_t^2$ and}~$\kappa(K_t^2)$ denotes the condition number of $K_t^2$ (it is independent of~$t$~by Assumption \ref{ass:1}).~{The first inequality is by the eigenvalue interlacing \mbox{theorem},~which leads~to $\sigma_{\max}(S^TK_t^2S)\leq \sigma_{\max}(K_t^2)$, and the second equality is by the sampling mechanism:$\qquad\qquad$\vskip-0.15cm
\begin{equation*}
\mE[SS^T] = \frac{1}{\begin{pmatrix}
d+m\\
s
\end{pmatrix}}\sum_{\mA \in \\ \{\text{sets of $s$ indices}\}} I_{\mA} = \frac{\begin{pmatrix}
d+m-1\\
s-1
\end{pmatrix}}{\begin{pmatrix}
d+m\\
s
\end{pmatrix}} I = \frac{s}{d+m} I.
\end{equation*}
Here, $I_\mA\in\mR^{(d+m)\times (d+m)}$ is a diagonal matrix with $[I_\mA]_{i,i}=1$ if the index $i\in \mA$ and~$[I_\mA]_{i,i}=0$ otherwise.~The set $\mA$ contains $s$ distinct indices.~We note that a recent study \cite[\hskip-2pt(1.3)]{Derezinski2024Sharp} showed a similar lower bound to~\eqref{nnequ:1}~for~\mbox{Gaussian}~\mbox{sketching}.~The~\mbox{authors} improved the denominator from $(d+m) \sigma_{\max}^2(K_t)$ to $\|K_t\|_F^2$, though the latter still grows~linearly with the problem dimension without additional spectral decay assumptions.$\qquad\qquad\qquad$} 

Assumption \ref{ass:3} directly leads to the~following~result.\;\;

\begin{lemma}[Guarantees of sketching solvers]\label{lem:1}
	
Under Assumption \ref{ass:3}, for all $t\geq 0$:
\begin{enumerate}[topsep=1pt,itemsep=0em,partopsep=-2pt,parsep=1ex,label=(\alph*):,beginpenalty=10000]
\item Let $\rho = 1-\gamma_{S}$. We have $0\leq \rho < 1$.
\item $\mE[\bz_{t,\tau} - \tbz_t\mid \mF_{t-2/3}] = -(I - \mE[K_tS(S^TK_t^2S)^\dagger S^TK_t \mid \mF_{t-1}])^\tau\tbz_t \eqqcolon C_t\tbz_t$, and $\|C_t\|\leq \rho^\tau$. 
\item $\mE[\|\bz_{t,\tau} - \tbz_t\|^2\mid \mF_{t-2/3}] \leq \rho^\tau \|\tbz_t\|^2$.
\end{enumerate}
	
\end{lemma}

\newpage

{ 
\begin{remark}

We note that the linear convergence rate $\rho$ of the expected error of the~sketching solver depends on the lower bound $\gamma_S \in (0, 1]$ of the projection matrix, which is proportional~to the sketching dimension $s$ and inversely proportional to the problem dimension $d+m$, as~shown in \eqref{nnequ:1}. The condition number $\kappa^2(K_t)$ is assumed to be uniformly bounded in our study.$\quad\;\;\;\;\;$

By the above relation, and given an error threshold $\delta$, we have $\rho^\tau = (1-\gamma_S)^\tau\leq \delta \Longleftrightarrow\tau\geq \log(1/\delta)/\log(1/\{1-\gamma_S\}) = O(1/\gamma_S) = O((d+m)/s)$.~This implies that,~to~decay the expected error below a threshold, the number of sketching steps $\tau$ is proportional~to~the~problem dimension $d+m$ and inversely proportional to the sketching dimension $s$.~Certainly,~a~larger~sketching dimension leads to a higher computational cost per step in \eqref{equ:pseduo}. Using the Kaczmarz~method as an example, the flops per step are $O((d+m)s^2)$ (dominated by the computation of $S^TK_t^2S$). Thus, the total flops over $\tau$ steps are $O((d+m)^2s)$, indicating that a smaller~sketching~dimension is generally preferable.~That said, this analysis only reflects a worst-case scenario~under~the presumption that $\kappa^2(K_t)$ is uniformly bounded. The optimal choice of~$s$~is~often~\mbox{case-by-case} 
and depends on whether $K_t$ exhibits a particular sparsity or eigenvalue decay structure~\citep{Derezinski2024Sharp}.

\end{remark}
}

\subsection{Almost sure convergence}\label{sec:3.2}

We now set the stage to show global almost sure convergence. The first result shows~that~the exact Newton direction $(\Delta\bx_t, \Delta\blambda_t)$ is a descent direction of $\mL_{\mu, \nu}^t = \mL_{\mu, \nu}(\bx_t, \blambda_t)$ if $\mu$ is~\mbox{sufficiently} large and $\nu$ is sufficiently small.

\begin{lemma}\label{lem:2}

Under Assumption \ref{ass:1}, there exists a deterministic constant $\Upsilon_1>0$, depending only on $(\gamma_{G}, \gamma_{RH}, \Upsilon_B)$, such that
\begin{equation*}
\begin{pmatrix}
\nabla_{\bx}\mL_{\mu, \nu}^t\\
\nabla_{\blambda}\mL_{\mu, \nu}^t
\end{pmatrix}^T\begin{pmatrix}
\Delta\bx_t\\
\Delta\blambda_t
\end{pmatrix} \leq - \frac{\nu}{\Upsilon_1}\cbr{\nbr{\begin{pmatrix}
\Delta\bx_t\\
\Delta\blambda_t
\end{pmatrix}}^2 + \nbr{\begin{pmatrix}
\nabla_{\bx}\mL_t\\
c_t
\end{pmatrix}}^2} ,
\end{equation*}
provided $\mu\nu \geq \Upsilon_1$ and $\nu \leq 1/\Upsilon_1$.
\end{lemma}

With Lemmas \ref{lem:1} and \ref{lem:2}, we are able to show the following one-step recursion of $\mL_{\mu, \nu}^t$.\;\;\;

\begin{lemma}\label{lem:3}
Under Assumptions \ref{ass:1}, \ref{ass:2}(\ref{equ:BM:a}), \ref{ass:3}, we suppose that the pair $(\mu, \nu)$ satisfies the condition in Lemma \ref{lem:2} and $\tau$ satisfies $\rho^\tau\leq \nu/(\mu\Upsilon_1)$. Then, there exists a deterministic constant $\Upsilon_2> 0$, depending on $(\gamma_{G},\gamma_{RH}, \Upsilon_B, \Upsilon_L, \Upsilon_m)$, such that
\begin{equation*}
\mE[\mL_{\mu, \nu}^{t+1}\mid \mF_{t-1}] \leq \mL_{\mu, \nu}^t - \frac{\nu}{2\Upsilon_1}\cdot\beta_t\|\nabla\mL_t\|^2 + \Upsilon_2(\chi_t +\eta_t^2).
\end{equation*}
	
\end{lemma}

With Lemma \ref{lem:3}, we can apply Robbins-Siegmund theorem \citep{Robbins1971convergence} to establish the convergence of the KKT residual $\|\nabla\mL_t\|$.

\begin{theorem}[Global convergence]\label{thm:1}

Consider Algorithm \ref{alg:1} under Assumptions \ref{ass:1}, \ref{ass:2}(\ref{equ:BM:a}), \ref{ass:3}. Suppose we perform the sketching solver (\ref{equ:pseduo}) for $\tau$ steps with $\tau \geq 4\log \Upsilon_1/\log\{1/(1-\gamma_{S})\}$, where $\Upsilon_1$ is from Lemma \ref{lem:2}. Also, we let $\{\beta_t, \eta_t = \beta_t+\chi_t\}$ satisfy
\begin{equation}\label{cond:step}
\sum_{t=0}^{\infty} \beta_t = \infty, \quad\quad \sum_{t=0}^{\infty} \beta_t^2 <\infty, \quad\quad \sum_{t=0}^{\infty}\chi_t  <\infty.
\end{equation}
Then, we have $\|(\bx_{t+1}-\bx_t, \blambda_{t+1}-\blambda_t)\|\rightarrow0$  and $\|\nabla\mL_t\|\rightarrow 0$ as $t\rightarrow\infty$ almost surely.
\end{theorem}

Theorem \ref{thm:1} indicates that all limiting points of the primal-dual iteration sequence~$(\bx_t, \blambda_t)$ are stationary.~Our global convergence guarantee aligns with the guarantee of \mbox{deterministic}~SQP methods \cite[Theorem 18.3]{Nocedal2006Numerical}, as well as the guarantee of recent stochastic SQP methods \citep{Na2022adaptive, Na2023Inequality}, despite the fact that Algorithm \ref{alg:1} \mbox{possesses}~an~additional source of randomness from the sketching solver at each step. The convergence of $(\bx_t, \blambda_t)$ is equivalent to the convergence of $\nabla\mL_t$ if Problem \eqref{pro:1} is convex. Furthermore, the results $\|\nabla\mL_t\|\vee \|(\bx_{t+1}-\bx_t, \blambda_{t+1}-\blambda_t)\| \stackrel{a.s.}{\longrightarrow}0$ imply the existence of an attraction region around~the local solution $(\bx^\star, \blambda^\star)$. Once $(\bx_t, \blambda_t)$ lies in the neighborhood, all subsequent iterates will stay in the neighborhood and $(\bx_t, \blambda_t)\stackrel{a.s.}{\longrightarrow} (\tx, \tlambda)$ \cite[\mbox{Chapter}~4.4]{Bertsekas1982Constrained}.

Based on Theorem \ref{thm:1}, we can immediately show the worst-case iteration complexity.~Due to the online nature of the method, the iteration complexity is equivalent to~the~sample~complexity, as we observe one sample in each iteration.

\begin{corollary}\label{cor:1}

Consider Algorithm \ref{alg:1} under Assumptions \ref{ass:1}, \ref{ass:2}(\ref{equ:BM:a}), \ref{ass:3}. Suppose $\tau$ satisfies the condition in Theorem \ref{thm:1}, and let $\beta_t = (t+1)^{-a}$, $\chi_t = (t+1)^{-b}$ where $a\in(0, 1)$ and $a<b$. Also, define $\T_{\epsilon} = \inf_{t}\cbr{t\geq 1: \mE[\|\nabla\mL_t\|] \leq \epsilon}$. Then, we have
\begin{equation*}
\T_{\epsilon} = O\rbr{\epsilon^{-\frac{2}{a\wedge (1-a)\wedge (b-a)}}}.
\end{equation*}
In particular, $\T_{\epsilon}$ attains the minimum $O(\epsilon^{-4})$ with $a = 1/2$ and $b = 1$.
\end{corollary}

We should mention that a recent work \cite{Curtis2023Worst} also showed an $O(\epsilon^{-4})$ iteration complexity with Newton systems being exactly solved. Our result matches theirs~while~allowing for the use of sketching solvers to inexactly solve Newton systems. We highlight~that~Corollary \ref{cor:1} is based on a \textit{non-asymptotic} convergence rate of the averaged expected KKT residual $\frac{1}{t}\sum_{i=0}^{t-1}\mE[\|\nabla\mL_i\|]$. This non-asymptotic result is in contrast to our main inference analysis in Section \ref{sec:4}, where the results hold asymptotically.

\section{Statistical Inference via StoSQP}\label{sec:4}

We perform online statistical inference for Problem \eqref{pro:1} by leveraging StoSQP. To segue into inference analysis, we suppose in this section that the method converges~to a local solution $(\tx, \tlambda)$ of \eqref{pro:1}; specifically, $G^\star = \nabla c^\star$ has full row rank and $\nabla_{\bx}^2\mL^\star$ is positive definite in~the null space $\{\bx\in\mR^d: G^\star\bx = \0\}$. These optimality conditions ensure~that~the~Lagrangian~Hessian $K^\star = \nabla^2\mL^\star$ is non-singular, as necessary for $M$-estimators in \eqref{equ:M:est}.

\subsection{Iteration recursion}\label{sec:4.1}

From a high-level view, our method generates a stochastic sequence
\begin{equation}\label{equ:Markov}
\begin{pmatrix}
\bx_{t+1} - \bx^\star\\
\blambda_{t+1} - \blambda^\star
\end{pmatrix} = (1 - \baralpha_t) \begin{pmatrix}
\bx_t - \bx^\star\\
\blambda_t - \blambda^\star
\end{pmatrix} + \baralpha_t\begin{pmatrix}
\btheta_{\bx}^t\\
\btheta_{\blambda}^t
\end{pmatrix} + \baralpha_t\begin{pmatrix}
\bdelta_{\bx}^t\\
\bdelta_{\blambda}^t
\end{pmatrix},
\end{equation}
where $(\btheta_{\bx}^t, \btheta_{\blambda}^t)$ is a martingale difference with $\mE[(\btheta_{\bx}^t, \btheta_{\blambda}^t) \mid \mF_{t-1}] = \0$, and $(\bdelta_{\bx}^t, \bdelta_{\blambda}^t)$ is~the~remaining error term.~Compared to existing stochastic first- and second-order methods~\citep{Chen2020Statistical,Bercu2020Efficient,Duchi2021Asymptotic, Davis2024Asymptotic, Boyer2023asymptotic}, the randomness brought by the adaptivity and inexactness (AI)~of~the~method affects all the terms in \eqref{equ:Markov}. This includes the random stepsize $\baralpha_t$, as well as the random approximation errors in $(\btheta_{\bx}^t, \btheta_{\blambda}^t)$ and $(\bdelta_{\bx}^t, \bdelta_{\blambda}^t)$ associated with the sketching solver.

We formalize the recursion \eqref{equ:Markov} in the following lemma.

\begin{lemma}\label{lem:4}
Let $\varphi_t = (\beta_t+\eta_t)/2$. The iteration sequence of Algorithm \ref{alg:1} can be expressed~as$\quad$
\begin{equation*}
\begin{pmatrix}
\bx_{t+1} - \bx^\star\\
\blambda_{t+1} - \blambda^\star
\end{pmatrix} = \I_{1,t} + \I_{2,t} + \I_{3,t}
\end{equation*}
where\allowdisplaybreaks
\begin{subequations}\label{rec}
\begin{align}
\I_{1,t} & = \sum_{i=0}^t\prod_{j=i+1}^{t}\cbr{I - \varphi_j(I+C^\star)}\varphi_i\begin{pmatrix}
\btheta_{\bx}^i\\
\btheta_{\blambda}^i
\end{pmatrix}, \label{rec:a}\\
\I_{2,t} & = \sum_{i=0}^t\prod_{j=i+1}^{t} \cbr{I - \varphi_j(I+C^\star)}\rbr{\baralpha_i - \varphi_i}\begin{pmatrix}
\barDelta\bx_i\\
\barDelta\blambda_i
\end{pmatrix}, \label{rec:b}\\
\I_{3,t} & = \prod_{i=0}^{t}\cbr{I - \varphi_i(I+C^\star)}\begin{pmatrix}
\bx_0 - \bx^\star\\
\blambda_0 - \blambda^\star
\end{pmatrix} + \sum_{i=0}^t\prod_{j=i+1}^{t}\cbr{I - \varphi_j(I+C^\star)}\varphi_i\begin{pmatrix}
\bdelta_{\bx}^i\\
\bdelta_{\blambda}^i
\end{pmatrix}, \label{rec:c}
\end{align}
\end{subequations}
and
\begin{subequations}\label{rec:def}
\begin{flalign}
&\hskip-0.48cm C^\star = -(I - \mE[K^\star S(S^T(K^\star)^2S)^\dagger S^TK^\star])^\tau, \label{rec:def:a}\\ 
&\hskip-1.1cm  \begin{pmatrix}
\btheta_{\bx}^i\\
\btheta_{\blambda}^i
\end{pmatrix}  = -(I+C_i)K_i^{-1}\begin{pmatrix}
\barg_i - \nabla f_i\\
\0
\end{pmatrix} + \cbr{\begin{pmatrix}
\barDelta\bx_i\\
\barDelta\blambda_i
\end{pmatrix} - (I + C_i)\begin{pmatrix}
\tDelta\bx_i\\
\tDelta\blambda_i
\end{pmatrix}}, \label{rec:def:b}\\ 
&\hskip-1.1cm \begin{pmatrix}
\bdelta_{\bx}^i\\
\bdelta_{\blambda}^i
\end{pmatrix}  = -(I+C_i)\cbr{(K^\star)^{-1}\begin{pmatrix}
\bpsi_{\bx}^i\\
\bpsi_{\blambda}^i
\end{pmatrix} + \cbr{K_i^{-1} - (K^\star)^{-1}}\begin{pmatrix}
\nabla_{\bx}\mL_i\\
c_i
\end{pmatrix} } - (C_i-C^\star)\begin{pmatrix}
\bx_i - \bx^\star\\
\blambda_i - \blambda^\star
\end{pmatrix},\label{rec:def:c}\\ 
&\hskip-1.2cm \begin{pmatrix}
\bpsi_{\bx}^i\\
\bpsi_{\blambda}^i
\end{pmatrix} = \begin{pmatrix}
\nabla_{\bx}\mL_i\\
c_i
\end{pmatrix} - K^\star\begin{pmatrix}
\bx_i-\bx^\star\\
\blambda_i-\blambda^\star
\end{pmatrix}. \label{rec:def:d} 
\end{flalign}
\end{subequations}
Under Assumptions \ref{ass:2}, \ref{ass:3}, $\btheta^i = (\btheta_{\bx}^i, \btheta_{\blambda}^i)$ is a martingale difference~with $\mE[\btheta^i\mid \mF_{i-1}] = \0$.	
\end{lemma}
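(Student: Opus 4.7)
The plan is to reduce the three-term decomposition $\I_{1,t}+\I_{2,t}+\I_{3,t}$ to the one-step recursion
\[
\begin{pmatrix}\bx_{t+1}-\bx^\star\\ \blambda_{t+1}-\blambda^\star\end{pmatrix}
= \{I-\varphi_t(I+C^\star)\}\begin{pmatrix}\bx_t-\bx^\star\\ \blambda_t-\blambda^\star\end{pmatrix}
+ \varphi_t\begin{pmatrix}\btheta_{\bx}^t\\ \btheta_{\blambda}^t\end{pmatrix}
+ (\baralpha_t-\varphi_t)\begin{pmatrix}\barDelta\bx_t\\ \barDelta\blambda_t\end{pmatrix}
+ \varphi_t\begin{pmatrix}\bdelta_{\bx}^t\\ \bdelta_{\blambda}^t\end{pmatrix}.
\]
Once this is established, unrolling from iteration $0$ to $t$ with propagator $\prod_{j=i+1}^t\{I-\varphi_j(I+C^\star)\}$ immediately produces the three sums \eqref{rec:a}--\eqref{rec:c}; the initial error $(\bx_0-\bx^\star,\blambda_0-\blambda^\star)$ furnishes the boundary term of $\I_{3,t}$.

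To reach this one-step form, I would start from the update \eqref{equ:update}, split $\baralpha_t=\varphi_t+(\baralpha_t-\varphi_t)$ to peel off the $\I_{2,t}$ contribution $(\baralpha_t-\varphi_t)(\barDelta\bx_t,\barDelta\blambda_t)$, and reduce the claim to the algebraic identity
\[
\begin{pmatrix}\barDelta\bx_t\\ \barDelta\blambda_t\end{pmatrix}+(I+C^\star)\begin{pmatrix}\bx_t-\bx^\star\\ \blambda_t-\blambda^\star\end{pmatrix}
=\begin{pmatrix}\btheta_{\bx}^t\\ \btheta_{\blambda}^t\end{pmatrix}+\begin{pmatrix}\bdelta_{\bx}^t\\ \bdelta_{\blambda}^t\end{pmatrix}.
\]
To verify it, I would substitute the definitions \eqref{rec:def:b}--\eqref{rec:def:d} into the right-hand side, use $\tbz_t = -K_t^{-1}\bnabla\mL_t$ with the split $\bnabla\mL_t=\nabla\mL_t+(\barg_t-\nabla f_t,\0)$, and invoke the linearization $\nabla\mL_t=K^\star(\bx_t-\bx^\star,\blambda_t-\blambda^\star)+\bpsi^t$ implicit in \eqref{rec:def:d}. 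The key telescopings are $(K^\star)^{-1}\bpsi^t+\{K_t^{-1}-(K^\star)^{-1}\}\nabla\mL_t = K_t^{-1}\nabla\mL_t-(\bx_t-\bx^\star,\blambda_t-\blambda^\star)$ and $(I+C_t)-(C_t-C^\star)=I+C^\star$; after these the right-hand side collapses to $(\barDelta\bx_t,\barDelta\blambda_t)+(I+C^\star)(\bx_t-\bx^\star,\blambda_t-\blambda^\star)$, since $\bz_{t,\tau}-(I+C_t)\tbz_t$ is precisely the sketching residual that sits inside \eqref{rec:def:b}.

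For the martingale property $\mE[\btheta^i\mid \mF_{i-1}]=\0$, I would exploit that $B_i$, and hence $K_i,C_i$, and $(I+C_i)K_i^{-1}$, are $\mF_{i-1}$-measurable, since $B_i$ aggregates only $\bnabla_{\bx}^2\mL_j$ for $j\le i-1$ (cf.~\eqref{equ:B_t}). Assumption~\ref{ass:2} then gives $\mE[-(I+C_i)K_i^{-1}(\barg_i-\nabla f_i,\0)\mid \mF_{i-1}]=\0$. For the second bracket in \eqref{rec:def:b}, I would tower-condition on $\mF_{i-2/3}$: since $(\tDelta\bx_i,\tDelta\blambda_i)=\tbz_i$ is $\mF_{i-2/3}$-measurable and Lemma~\ref{lem:1}(b) gives $\mE[\bz_{i,\tau}-\tbz_i\mid \mF_{i-2/3}]=C_i\tbz_i$, the conditional expectation of the bracket vanishes, and the tower property carries this down to $\mF_{i-1}$.

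The main obstacle is not any single computation but the bookkeeping: the identity to verify simultaneously entangles three sources of randomness (stochastic gradient through $\xi_t$, sketching through $\zeta_t$, and stepsize through $\psi_t$) together with three approximation layers ($K_t^{-1}$ versus $(K^\star)^{-1}$, $C_t$ versus $C^\star$, and $\bnabla\mL_t$ versus its $K^\star$-linearization). Partitioning the residuals cleanly into the zero-mean increment $\btheta^t$ and the deterministic higher-order error $\bdelta^t$ exactly as prescribed by \eqref{rec:def:b}--\eqref{rec:def:c} requires careful tracking of each cross-term during the substitution, so that nothing is double-counted nor absorbed into the wrong piece.
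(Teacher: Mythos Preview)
Your proposal is correct and follows essentially the same route as the paper: the paper derives the one-step recursion forward by successively splitting $\baralpha_t$, inserting $(I+C_t)\tbz_t$, replacing $\tbz_t$ via \eqref{equ:Newton}, peeling off the gradient noise, then $K_t^{-1}-(K^\star)^{-1}$, then the linearization \eqref{rec:def:d}, and finally $C_t-C^\star$, before unrolling; you instead posit the one-step form and verify it by substituting \eqref{rec:def:b}--\eqref{rec:def:d}, but the telescopings you single out are exactly the ones the paper uses. Your martingale argument via the tower property on $\mF_{i-2/3}$ and Lemma~\ref{lem:1}(b) matches the paper's \eqref{equ:tower} verbatim; one small wording slip is calling $\bdelta^t$ ``deterministic'' --- it is $\mF_{t-1}$-measurable rather than deterministic, but this does not affect the argument.
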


From Lemma \ref{lem:4}, we observe that the recursion consists of three terms. $\I_{1,t}$ is a~martingale that accounts for the randomness of sampling $\xi_t$ to estimate $\nabla f_t$ and the randomness of sketching $\{S_{t,j}\}_j$ to solve QP \eqref{equ:QP}. $\I_{2,t}$ captures the randomness of the stepsize $\baralpha_t$. $\I_{3,t}$ contains all~the~remainder terms. The asymptotic analysis of each term is provided~in~Appendix~\ref{pf:thm:3}.

Next, we establish a continuity property for the projection matrix $K_tS(S^TK_t^2S)^\dagger S^TK_t$, a critical quantity of the sketching solver appeared in $C_t$ and $C^\star$ (cf. Lemma \ref{lem:1}(b) and~\eqref{rec:def:a}).

\begin{lemma}\label{lem:5}
Suppose $K_t, K^\star \in \mR^{(d+m)\times(d+m)}$ are non-singular. For any $S\in \mR^{(d+m)\times s}$,
\begin{equation*}
\|K_tS(S^TK_t^2S)^\dagger S^TK_t - K^\star S(S^T(K^\star)^2S)^\dagger S^TK^\star\| \leq \frac{2\|K_t-K^\star\|}{\sigma_{\min}(K^\star)}\cdot\|S\|\|S^\dagger\|,
\end{equation*}
where $\sigma_{\min}(\cdot)$ denotes the least singular value.
\end{lemma}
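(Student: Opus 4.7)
The plan is to recognize that $K_tS(S^TK_t^2S)^\dagger S^TK_t$ is the orthogonal projection onto the column space of $K_tS$. Since $K_t$ is the (symmetric) KKT matrix, $S^TK_t^2S = (K_tS)^T(K_tS)$, and therefore
\begin{equation*}
K_tS(S^TK_t^2S)^\dagger S^TK_t = (K_tS)\bigl((K_tS)^T(K_tS)\bigr)^\dagger (K_tS)^T \;=\; P_{\mathrm{Col}(K_tS)},
\end{equation*}
the standard formula for the projector onto $\mathrm{Col}(K_tS)$. An analogous identity holds for $K^\star$. Write $M = K_tS$ and $N = K^\star S$; the task reduces to bounding $\|P_M - P_N\|$ in terms of $\|K_t - K^\star\|$ and structural constants of $S$ and $K^\star$.

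Next I would verify that $M$ and $N$ have the same rank. Because $K_t$ and $K^\star$ are both invertible, $\ker(M)=\ker(K_tS)=\ker(S)$ and likewise $\ker(N)=\ker(S)$, so $\mathrm{rank}(M)=\mathrm{rank}(N)=\mathrm{rank}(S)$. This rank-preservation is the key hypothesis that lets one apply Wedin's $\sin\Theta$ theorem (as noted in the footnote preceding the lemma). Invoking Wedin's theorem in the standard projection form then yields
\begin{equation*}
\|P_M - P_N\| \;\leq\; \frac{2\,\|M-N\|}{\sigma_{\min}^{+}(N)}.
\end{equation*}

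The remaining work is to bound the numerator and denominator. For the numerator, $\|M-N\| = \|(K_t-K^\star)S\| \leq \|K_t-K^\star\|\cdot\|S\|$. For the denominator, since $\ker(N)=\ker(S)$, for any $x\perp\ker(S)$ with $x\neq \mathbf{0}$ I have $\|Sx\|\geq \sigma_{\min}^{+}(S)\|x\|$, and then $\|Nx\| = \|K^\star Sx\| \geq \sigma_{\min}(K^\star)\|Sx\| \geq \sigma_{\min}(K^\star)\sigma_{\min}^{+}(S)\|x\|$, so $\sigma_{\min}^{+}(N) \geq \sigma_{\min}(K^\star)\sigma_{\min}^{+}(S)$. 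Plugging these two inequalities into the Wedin bound gives the desired estimate.

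The main obstacle is the correct invocation of Wedin's theorem: the pseudoinverse $(\cdot)^{\dagger}$ is only continuous under rank-preserving perturbations, so one must justify that the rank of $K_tS$ and $K^\star S$ coincide before applying a perturbation bound with $\sigma_{\min}^{+}(N)$ in the denominator. The invertibility of both $K_t$ and $K^\star$ is precisely what makes this work. Everything else is a two-line calculation; the implicit symmetry of the KKT matrices is the only unstated structural input that needs to be flagged.
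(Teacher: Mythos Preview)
Your proposal is correct and follows essentially the same approach as the paper: identify each matrix as the orthogonal projection onto $\mathrm{Col}(K_tS)$ (respectively $\mathrm{Col}(K^\star S)$), use invertibility of $K_t,K^\star$ to ensure equal ranks, apply Wedin's $\sin\Theta$ theorem, and bound $\sigma_{\min}^{+}(K^\star S)\geq\sigma_{\min}(K^\star)\,\sigma_{\min}^{+}(S)$. The only cosmetic difference is that the paper derives the inequality $\|P_M-P_N\|\leq 2\|\sin\Theta\|$ from an explicit principal-angle calculation before invoking Wedin, whereas you quote the combined projection-form bound directly.
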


Lemma \ref{lem:5} indicates that the difference between the projection matrices $K_tS(S^TK_t^2S)^\dagger S^TK_t $ and $K^\star S(S^T(K^\star)^2S)^\dagger S^TK^\star$ is proportional to the difference between the Hessian matrices $K_t$ and $K^\star$, with a random factor scaling with the condition number of the sketching~matrix~$S$. 
In practice, using sketching vectors ($s=1$) can reduce computational cost and result in~a~unit condition number $\|S\|\|S^\dagger\|= 1$. 

Lemma~\ref{lem:5} leads to the following \mbox{condition}~to~\mbox{ensure}~the~convergence of $C_t$, which is the expectation of the product of projection matrices.$\hskip2.5cm$

\begin{assumption}\label{ass:5}
We assume that $S$ satisfies $\mE[\|S\|\|S^\dagger\|] \leq \Upsilon_S$ for a constant $\Upsilon_S>0$.
\end{assumption}

\begin{corollary}\label{cor:2}
Under Assumption \ref{ass:5}, $\|C_t-C^\star\| \leq 2\tau\Upsilon_S\|K_t - K^\star\|/\sigma_{\min}(K^\star)$.
\end{corollary}

\subsection{Asymptotic rate and normality}\label{sec:4.2}

We are now ready to state inference theory. Let $S_1,\ldots,S_\tau\stackrel{iid}{\sim} S$, and define a random~matrix:
\begin{equation}\label{equ:tC}
\tC^\star = -\prod_{j=1}^{\tau}(I - K^\star S_j(S_j^T(K^\star)^2S_j)^\dagger S_j^TK^\star ).
\end{equation}
Clearly, $\mE[\tC^\star] = C^\star$. Also, define the sandwich matrix that appears as the limiting~covariance of $M$-estimators in \eqref{equ:M:est}:
{\small\begin{align}\label{equ:Omega}
\hskip-0.4cm \tOmega =  (K^\star)^{-1}\cov(\bnabla\mL^\star)(K^\star)^{-1} = \begin{pmatrix}
\nabla_{\bx}^2\mL^\star & (G^\star)^T\\
G^\star & \0
\end{pmatrix}^{-1}\begin{pmatrix}
\cov(\nabla F(\tx; \xi)) & \0\\
\0 & \0
\end{pmatrix}\begin{pmatrix}
\nabla_{\bx}^2\mL^\star & (G^\star)^T\\
G^\star & \0
\end{pmatrix}^{-1}.
\end{align}}
\hskip-4pt To allow for general stepsize control sequences $\{\beta_t, \chi_t\}_t$, we define three quantities: \begin{equation}\label{equ:three:quant}
\beta \coloneqq \lim\limits_{t\rightarrow\infty} t\rbr{1 - \frac{\beta_{t-1}}{\beta_t}}, \quad\quad\quad \tbeta \coloneqq \lim\limits_{t\rightarrow\infty} t\beta_t, \quad\quad\quad  \chi \coloneqq \lim\limits_{t\rightarrow\infty}t\rbr{1 - \frac{\chi_{t-1}}{\chi_t}}.
\end{equation}
The polynomial sequences $1/t^\omega$ are specialized in Lemma \ref{lem:10}. For sake of understanding,~we here mention that if $\beta_t=O(1/t^\omega)$ for any $\omega>0$, then we simply have $\beta = -\omega$.

\begin{theorem}[Local convergence rate]\label{thm:3}
Under Assumptions \ref{ass:1}, \ref{ass:2}(\ref{equ:BM:a}, \ref{equ:BM:e}), \ref{ass:3}, we suppose $\{\beta_t, \chi_t\}_t$ satisfy
\begin{equation}\label{cond:n:1}
\chi<\beta <0,\quad\quad \tbeta\in(0, \infty], \quad\quad 1.5(1-\rho^\tau) + \beta/\tbeta >0.
\end{equation}
Then, for any $\upsilon>0$ and any constant $p\in(0,1]$ such that $(1-\rho^\tau) + p(\chi - 0.5\beta)/\tbeta>0$,~we~have
(if $p<1$, the second $O(\cdot)$ in the following results can be strengthened to $o(\cdot)$)
\begin{equation*}
\|(\bx_t- \bx^\star, \blambda_t - \blambda^\star)\| = o\big(\sqrt{\beta_t\{\log(1/\beta_t)\}^{1+\upsilon}}\;\big) + O(\chi_t^p/\beta_t^p)\quad a.s.
\end{equation*}
Furthermore, if (\ref{equ:BM:a}) is strengthened to (\ref{equ:BM:b}), then
\begin{equation*}
\|(\bx_t- \bx^\star, \blambda_t - \blambda^\star)\|  = O\big(\sqrt{\beta_t\log(1/\beta_t)}\;\big) + O(\chi_t^p/\beta_t^p)\quad a.s.
\end{equation*}

\end{theorem}

The asymptotic convergence rate consists of two terms. The first term $o(\sqrt{\beta_t\{\log(1/\beta_t)\}^{1+\upsilon}})$ comes from the strong law of large number for the martingale $\I_{1,t}$, which can be strengthened to $O(\sqrt{\beta_t\log(1/\beta_t)})$ if the stochastic gradient estimate $\barg_t$ has a bounded moment of order higher than two \citep[Theorem 1.3.15]{Duflo1997Random}. That is, the bounded 3rd~\mbox{moment}~in~\eqref{equ:BM:b}~can be directly replaced by a bounded $2+\delta$ moment. 
The second term $O(\chi_t^p/\beta_t^p)$ comes from~$\I_{2,t}$, which characterizes the adaptivity of the random stepsize $\baralpha_t$. This term is suppressed~if~we~degrade the method to a non-adaptive one ($\chi_t=0$).

We will investigate the condition \eqref{cond:n:1} in Lemma \ref{lem:10} and demonstrate that it is~week~enough to allow for different setups of sequences $\{\beta_t, \chi_t\}_t$. Most importantly, the condition \eqref{cond:n:1}~covers the setup of $\beta_t=1/t$, corresponding to $\beta=-1$ and $\tbeta=1$, which leads to the optimal central limit theorem rate (cf. Theorem \ref{thm:4}). 
In fact, \eqref{cond:n:1} reveals a relationship between~the inexactness of the sketching solver (i.e., the parameter $\tau$) and the setup of the stepsize. 
When $\tbeta = \infty$ (e.g., $\beta_t=1/t^\omega$ for $\omega<1$), the third condition in \eqref{cond:n:1} holds trivially.

Furthermore, we note that $(1-\rho^\tau) + p(\chi - 0.5\beta)/\tbeta>0$ is always satisfied for small~$p$~(specifically, $p=1$ if $\tbeta = \infty$). 
Thus, our condition on the adaptivity gap $\chi_t$ is simply $\chi_t=o(\beta_t)$~(i.e., $\chi<\beta$). We have to strengthen this condition to $\chi_t=o(\beta_t^{1.5})$ to enable inference analysis in Theorem \ref{thm:4}. Even that, it's worth mentioning that the methods proposed in \cite{Berahas2021Sequential, Berahas2023Stochastic, Curtis2021Inexact} all set $\chi_t = O(\beta_t^2)$ (i.e., $\chi\leq 2\beta$). Our analysis relaxes these designs to allow for a larger $\chi_t$, resulting in a wider interval \eqref{equ:sandwich} for stepsize adaptivity.$\quad\quad\;$

\begin{theorem}[Asymptotic normality]\label{thm:4}

Under Assumptions \ref{ass:1}, \ref{ass:2}(\ref{equ:BM:b}, \ref{equ:BM:e}), \ref{ass:3}, \ref{ass:5},~we strengthen $\chi<\beta$ in the condition (\ref{cond:n:1}) to $\chi<1.5\beta$. Then, we have
\begin{equation}\label{nequ:normality}
\sqrt{1/\baralpha_t}\cdot( \bx_t - \bx^\star, \blambda_t-\blambda^\star)\stackrel{d}{\longrightarrow}\N(\0, \Xi^\star),
\end{equation}
where $\Xi^\star$ is the solution of the following Lyapunov equation:
\begin{equation}\label{equ:Lya:equ}
\big(\{1+\beta/(2\tbeta) \} I +C^\star\big) \Xi^\star + \Xi^\star \big(\{1+\beta/(2\tbeta) \} I +C^\star\big) = \mE[ (I+\tC^\star)\tOmega(I+\tC^\star)^T].
\end{equation}
Furthermore, let us define $q = 1$ if $1-\rho^\tau + (\chi-\beta)/\tbeta>0$, otherwise $q = \{(1-\rho^\tau)\tbeta + \epsilon\beta\}/(\beta - \chi)\in(0, 1)$ for any $\epsilon\in(0, 1/6]$. Then, for any $\bw = (\bw_{\bx}, \bw_{\blambda})\in \mR^{d+m}$ such~that~$\bw^T\Xi^\star\bw \neq 0$,
\begin{multline}\label{equ:BE:Inequality}
\sup_{z\in\mR}\abr{P\rbr{\frac{\sqrt{1/\baralpha_t}\cdot \bw^T(\bx_t - \bx^\star,\blambda_t-\blambda^\star)}{\sqrt{\bw^T\Xi^\star\bw}} \leq z}  - P\rbr{\N(0,1)\leq z}} \\
= o(\beta_t^{1/6}\log(1/\beta_t)) + O(\chi_t^q/\beta_t^{q+0.5}),
\end{multline}
where $O(\cdot)$ can be strengthened to $o(\cdot)$ if $q<1$.
\end{theorem}

{
We mention that, to our knowledge, \eqref{nequ:normality} provides the first primal-dual asymptotic normality result for a constrained online estimation procedure,\hskip1pt while existing works in~\cite{Duchi2021Asymptotic, Davis2024Asymptotic} have only established primal asymptotic normality.~Although the uncertainty quantification of primal variables $\tx$ has a natural \mbox{meaning}~as~they~\mbox{represent} model parameters, the uncertainty quantification of dual variables $\tlambda$ is also significant~in~two~ways~beyond the technical interest.
\begin{enumerate}[label=(\alph*),topsep=5pt,leftmargin=0.8\parindent,labelsep=2pt]
\setlength\itemsep{0.0em}
\item 
The dual variables are widely used as optimality certificates in algorithmic designs.~In particular, the normality of $(\bx_t, \blambda_t)$ enables the construction of a confidence region for the gradient vector $\nabla_{\bx}\mL(\bx_t, \blambda_t)$; it suggests that the optimality residual, $\|\nabla_{\bx}\mL(\bx_t,\hskip-1pt\blambda_t)\|^2$,~\mbox{exhibits} a (generalized) chi-squared limiting distribution.~See \cite[Section 4.1]{Jones1983Statistical} and \cite[Section 5.6.2]{Shapiro2014Lectures} for analogous constructions. By checking whether the region contains $\0$, we can determine whether we have sufficiently achieved the optimality condition at the given significance level and terminate the algorithm accordingly.$\hskip10cm$

\item 
Methods for solving inequality-constrained problems often involve equality-constrained subproblems\hskip1pt \citep{Na2023Inequality},\hskip1.4pt and constructing confidence intervals for dual variables~\mbox{associated} with inequality constraints is crucial for active-set identification under uncertainty. This paper serves as a first step toward that goal.~Specifically, let $\tlambda\in\mR^m$ be the dual solution associated with the constraints $c(\bx)\leq \0$.~By the strict complementarity condition (\cite{Davis2024Asymptotic}, Example 2.1), we know $\tlambda_i>0 \Leftrightarrow c_i(\tx)= 0$ and $\tlambda_i=0\Leftrightarrow c_i(\tx)<0$. Thus, if the confidence interval for $\tlambda_i$ contains 0, it suggests that~the $i$-th constraint is not identified as active at the given significance level, and vice versa.$\hskip10cm$

\end{enumerate}

}

The explicit form of the solution to \eqref{equ:Lya:equ} is given by
\begin{equation}\label{equ:Xi:n}
\Xi^\star = U\big(\Theta\circ U^T\mE[ (I+\tC^\star)\tOmega(I+\tC^\star)^T]\; U \big) U^T\quad \text{with}\quad  [\Theta]_{k,l} = 1/(\sigma_k+\sigma_l + \beta/\tbeta),
\end{equation}
where $I+C^\star = U\Sigma U^T$ with $\Sigma = \diag(\sigma_1, \ldots, \sigma_{d+m})$ is the eigenvalue decomposition of~$I+C^\star$, and $\circ$ denotes the matrix Hadamard product.

From Theorem \ref{thm:4}, we see that the limiting covariance $\Xi^\star$ depends on the sandwich~matrix $\tOmega$ in \eqref{equ:Omega}, which is the same as the optimal one for $M$-estimators in \eqref{equ:M:est}, but it also~depends on the underlying sketching distribution. The sketching matrices affect both the left- and right-hand sides of the~Lyapunov \mbox{equation}~\eqref{equ:Lya:equ}. If we degrade the randomized sketching~solver~to~an exact QP solver, then $\tau = \infty$, $C^\star = \tC^\star = \0$, and
\begin{equation}\label{equ:cov:exact}
\Xi^\star = \frac{\tOmega}{2+\beta/\tbeta}.
\end{equation}
We present the following corollary to further connect Theorem \ref{thm:4} with the asymptotic~\mbox{minimax} optimality of constrained estimation problems established in \cite[Theorem 1]{Duchi2021Asymptotic} and \cite[Theorem 3.2]{Davis2024Asymptotic}.

\begin{corollary}\label{cor:3}

Let $\beta_t=1/t$, $\chi_t = o(\beta_t^{1.5})$, and $\tau$ such that $\rho^\tau<1/3$. Then,
\begin{enumerate}[topsep=2pt,itemsep=0em,partopsep=-3pt,parsep=0.5ex,label=(\alph*):,beginpenalty=10000]
\item Exact QP solver: $\tXi = \tOmega$,
\item Sketching solver: $\tXi\succeq \tOmega$ but $\|\tXi - \tOmega\|\leq 3\rho^\tau\|\tOmega\|$.
\end{enumerate}

\end{corollary}

By Corollary \ref{cor:3}, the limiting covariance of StoSQP obtained by suppressing the sketching solver matches the asymptotic minimax optimum $\tOmega$ that is achieved by offline $M$-estimators and online projection-based estimators \citep{Duchi2021Asymptotic, Davis2024Asymptotic}.~To~the~best of our knowledge, our estimator based on StoSQP is the first online estimator that does~not~rely on projection operators; we instead replace \mbox{projection}~by~\mbox{employing}~a~\mbox{series} of \mbox{linear-quadratic} approximation of nonlinear problems. 
On the other hand, when employing the sketching~solver to inexactly solve QPs, the limiting covariance of our method exceeds the optimum $\tOmega$,~indicating that the sketching solver (to address the computation concern of second-order methods) indeed compromises optimality. Fortunately, this \mbox{compromise}~is~marginal~since~the~\mbox{distance} between $\tXi$ and $\tOmega$ decays exponentially fast with the number of iterations $\tau$ performed~for~the sketching solver.

The Berry-Esseen bound in \eqref{equ:BE:Inequality} consists of two terms. The first term is due to the random sample and random sketching, while the second term is due to the random stepsize.~Our~choice of $q$ always guarantees that $\chi_t^q/\beta_t^{q+0.5} = o(1)$. We note that $q=1$ when $\tbeta = \infty$ (e.g.,~$\beta_t = 1/t^\omega$ for $\omega<1$). 

{
\begin{remark}\label{rem:2}
Since existing studies in \cite{Duchi2021Asymptotic, Davis2024Asymptotic} heavily used projection notation and only established the normality of the primal estimator, we~further elucidate in this remark the connection between our joint covariance $\tOmega$ (i.e.,~StoSQP~with~exact QP solver) and the covariance in \cite[Corollary 5.2]{Davis2024Asymptotic} and \cite[Theorem 5]{Duchi2021Asymptotic}.

Recall that $G^\star = \nabla c(\tx)\in\mR^{m\times d}$ is the constraints Jacobian. Let $Z^\star\in\mR^{d\times (d-m)}$ be~a matrix whose columns are orthonormal and form the bases of $\text{ker}(\tG)$. Then, using~the~relation $\tG^T(\tG\tG^T)^{-1}\tG + \tZ\tZ^T = I_d$, we can verify that
\begin{equation}\label{equ:2}
\begin{pmatrix}
\nabla_{\bx}^2\mL^\star & \tG^T\\
\tG & \0_m 
\end{pmatrix}^{-1} = \begin{pmatrix}
A_1 & A_2^T\\
A_2 & A_3
\end{pmatrix}
\end{equation}
where
\begin{align*}
A_1 & = \tZ(\tZ^T\nabla_{\bx}^2\mL^\star\tZ)^{-1}\tZ^T,\quad A_2 = (\tG\tG^T)^{-1}\tG(I - \nabla_{\bx}^2\mL^\star A_1),\\
A_3 & = (\tG\tG^T)^{-1}\tG(\nabla_{\bx}^2\mL^\star A_1 \nabla_{\bx}^2\mL^\star - \nabla_{\bx}^2\mL^\star)(\tG\tG^T)^{-1}\tG.
\end{align*}
Note that all above matrix inverses are well-defined under the conditions~of~\cite[Example 2.1 and Section 5.1]{Davis2024Asymptotic} (or, equivalently, under our conditions).~Plugging~the~above~display into \eqref{equ:Omega}, we see that the marginal covariance of $\bx$ is
\begin{equation}\label{equ:1}
\Omega_{\bx,\bx}^\star = A_1\text{cov}(\nabla F(\tx;\xi))A_1.
\end{equation}
Furthermore, we note that $A_1 = (\tZ\tZ^T\nabla_{\bx}^2\mL^\star\tZ\tZ^T)^\dagger$ by verifying~the definition of~the~Moore-Penrose pseudoinverse. Since $\tZ\tZ^T = \text{Proj}_{\text{ker}(\tG)}$ and $\text{ker}(\tG)$ is the tangent space of the manifold $c(\bx)$ at $\tx$, we see \eqref{equ:1} matches the result in \cite[Corollary~5.2]{Davis2024Asymptotic}.\;\;\;\;\;\;\;

\end{remark}

\begin{remark}\label{rem:3}
We note that $\tOmega$ is clearly singular. We investigate in this remark the support subspace of the limiting distribution. Let us write out the expression of $\tOmega$ using the notation in Remark \ref{rem:2}. We have
\begin{equation*}
\tOmega = \begin{pmatrix}
A_1\\
A_2
\end{pmatrix}\text{cov}(\nabla F(\tx;\xi))\begin{pmatrix}
A_1  & A_2^T
\end{pmatrix}
= \begin{pmatrix}
A_1\text{cov}(\nabla F(\tx;\xi)) A_1 & A_1\text{cov}(\nabla F(\tx;\xi)) A_2^T\\
A_2\text{cov}(\nabla F(\tx;\xi)) A_1 & A_2\text{cov}(\nabla F(\tx;\xi))A_2^T
\end{pmatrix}.
\end{equation*}
Note that $\text{rank}(A_1) \hskip-1.4pt = \hskip-1.4pt \text{rank}(\tZ(\tZ^T\nabla_{\bx}^2\mL^\star\tZ)^{-1/2})\hskip-1.4pt =\hskip-1.4pt d-m$, which implies \mbox{$\text{rank}(A_2)\hskip-1.4pt = \hskip-1.4pt m$}~(since the first block-column matrix $[A_1; A_2]$ in \eqref{equ:2} has rank $d$).~In the following~presentation,~we suppose $\text{rank}(\text{cov}(\nabla F(\tx;\xi))) = d$. 

\vskip5pt
\noindent $\bullet$ \textbf{Primal covariance.} Since
\begin{equation*}
\text{rank}(\Omega_{\bx,\bx}^\star) = \text{rank}(A_1\text{cov}(\nabla F(\tx;\xi))A_1^T) = \text{rank}(A_1\text{cov}(\nabla F(\tx;\xi))^{1/2}) = \text{rank}(A_1) = d-m,
\end{equation*}
the support subspace of $\Omega_{\bx,\bx}^\star$ has $d-m$ dimensions.~Specifically, we decompose $\mR^d$ into~$\mR^d = \ker(\tG) \oplus \text{span}(\tG^T)$, where the tangent space $\ker(\tG)$ is a $(d-m)$-dimensional subspace~of~$\mR^d$ and the normal space $\text{span}(\tG^T)$ is an $m$-dimensional subspace of $\mR^d$. Then, by the definition of $A_1$, we know that $\Omega_{\bx,\bx}^\star$ has support in the tangent space $\ker(\tG)$ and vanishes in the~normal space $\text{span}(\tG^T)$.

\vskip5pt
\noindent $\bullet$ \textbf{Dual covariance.} Since
\begin{equation*}
\text{rank}(\Omega_{\blambda,\blambda}^\star) =\text{rank}(A_2\text{cov}(\nabla F(\tx;\xi))A_2^T) = \text{rank}(A_2\text{cov}(\nabla F(\tx;\xi))^{1/2}) = \text{rank}(A_2) = m,
\end{equation*}
we know $\Omega_{\blambda,\blambda}^\star$ is non-degenerate along all directions in $\mR^m$ and has full support over~$\mR^m$.

\vskip5pt
\noindent $\bullet$ \textbf{Joint primal-dual covariance.} Since
$\text{rank}(\tOmega) = d$, we know the support subspace of $\tOmega$ has $d$ dimensions. Let us decompose $\mR^{d+m}$ into $\mR^{d+m} = (\text{ker}(\tG) \otimes \mR^m) \oplus (\text{span}(\tG^T)\otimes \0_m)$. More clearly, $\text{ker}(\tG) \otimes \mR^m$ is a $d$-dimensional subspace of $\mR^{d+m}$ and $\text{span}(\tG^T)\otimes \0_m$~is~an~$m$-dimensional subspace of $\mR^{d+m}$; there bases can be expressed as:
\begin{equation*}
\begin{pmatrix}
\tZ & \0_{d\times m}\\
\0_{m\times (d-m)} & I_m
\end{pmatrix}\in \mR^{(d+m)\times d}\quad\quad \oplus \quad\quad 	\begin{pmatrix}
\tG^T\\
\0_m
\end{pmatrix}\in\mR^{(d+m)\times m}.
\end{equation*}
Then, the joint covariance $\tOmega$ has support in the subspace $\text{ker}(\tG) \otimes \mR^m$ and vanishes~in~the~subspace $\text{span}(\tG^T)\otimes \0_m$.

\end{remark}

}

\subsection{An estimator of the covariance matrix }\label{sec:4.4}

We analyze a plug-in covariance matrix estimator. The sketch-related quantities in~\eqref{equ:Lya:equ},~$C^\star = \mE[\tC^\star]$ and $P^\star = \mE[(I+\tC^\star)\tOmega(I+\tC^\star)^T]$,  can be estimated by computing (note $\tC^\star$ is defined~in \eqref{equ:tC} and $\Omega_t$ is defined~in \eqref{equ:Xi_t})
\begin{gather*}
\tC_t \coloneqq-\prod_{j = 0}^{\tau-1}\rbr{I - K_tS_{t,j}(S_{t,j}^TK_t^2S_{t,j})^\dagger S_{t,j}^TK_t},\\
\hat{C}_t \coloneqq \frac{1}{t}\sum_{i=0}^{t-1}\tC_i,\quad \quad \text{and}\quad\quad \hat{P}_t\coloneqq\frac{1}{t}\sum_{i=0}^{t-1}(I+\tC_i)\Omega_i(I+\tC_i)^T.
\end{gather*}
Since solving Lyapunov equation requires additional effort and $\hat{P}_t$ requires matrix~\mbox{inverse}~even if we do not perform inference at the current step, in what follows, we~are motivated~by Corollary \ref{cor:3} and simply neglect the sketch-related quantities in~\eqref{equ:Lya:equ}~by~estimating \eqref{equ:cov:exact}~instead.~We demonstrate that such negligence only leads to an $O(\rho^\tau)$ error~term,~which~is~generally small even for a moderate $\tau$. Specifically, our estimator of $\Xi^\star$ is defined as:$\hskip1.5cm$
\begin{equation}\label{equ:Xi_t}
\Omega_t = K_t^{-1}\begin{pmatrix}
{sample\_cov}(\{\barg_i\}_{i=0}^{t-1}) & \0\\
\0 & \0
\end{pmatrix}K_t^{-1} \quad \text{ and }\quad \Xi_t = \frac{\Omega_t}{2+\beta/\tbeta},
\end{equation}
where ${sample\_cov}(\{\barg_i\}_{i=0}^{t-1}) = \frac{1}{t}\sum_{i=0}^{t-1}\barg_i\barg_i^T - \rbr{\frac{1}{t}\sum_{i=0}^{t-1}\barg_i}\rbr{\frac{1}{t}\sum_{i=0}^{t-1}\barg_i}^T$ is the sample \mbox{covariance}.

\begin{theorem}\label{thm:5}
Consider \eqref{equ:Xi_t} under the conditions of Theorem \ref{thm:4} with (\ref{equ:BM:c}). For~any~\mbox{$\upsilon>0$},
\begin{equation}\label{equ:Xit:rate}
\|\Xi_t - \Xi^\star \| = O\big(\sqrt{\beta_t \log(1/\beta_t)}\; \big) + o\big(\sqrt{(\log t)^{1+\upsilon}/t }\big) + O(\rho^\tau)\quad \text{a.s.}
\end{equation}	
Furthermore, for any $\bw = (\bw_{\bx}, \bw_{\blambda})\in \mR^{d+m}$ such that $\bw^T\Xi_t\bw \neq 0$,
\begin{multline}\label{equ:BE:prac}
\sup_{z\in\mR}\abr{P\rbr{\frac{\sqrt{1/\baralpha_t}\cdot \bw^T(\bx_t - \bx^\star,\blambda_t-\blambda^\star)}{\sqrt{\bw^T\Xi_t\bw}} \leq z}  - P\rbr{\N(0,1)\leq z}}\\
= o(\beta_t^{1/6}\log(1/\beta_t)) + O(\chi_t^q/\beta_t^{q+0.5}) +O(\rho^\tau),
\end{multline}
where $q\in(0, 1]$ is defined in Theorem \ref{thm:4}.
\end{theorem}

The second term $o\big(\sqrt{(\log t)^{1+\upsilon}/t }\big)$ can be absorbed into the first term $O\big(\sqrt{\beta_t \log(1/\beta_t)}\; \big)$ if $\beta>-1$, e.g., $\beta_t=1/t^\omega$ for $\omega\in(0,1)$. We require condition \eqref{equ:BM:c} for the estimation of the limiting covariance, which ensures the convergence of the sample covariance of $\{\barg_i\}_{i=0}^{t-1}$.~The bounded $4$-th moment of $\barg_t$ is also standard in the literature, as required for analyzing~different covariance estimators for SGD methods \citep{Chen2020Statistical, Zhu2021Online} .

\begin{remark}[Discussion on control sequences $\{\beta_t, \chi_t=\eta_t-\beta_t\}$]\label{rem:1}
	
We note that the~normality in \eqref{nequ:normality} utilizes the adaptive stepsize $\baralpha_t$, which nevertheless is controlled by the sequences $\{\beta_t, \chi_t\}$. We discuss their conditions in this remark. 

The global convergence requires (\ref{cond:step}) (Theorem \ref{thm:1}); the local convergence requires (\ref{cond:n:1}) (Theorem \ref{thm:3}); and the inference additionally requires $\chi<1.5\beta$ (Theorem~\ref{thm:4}). In fact,~by Raabe's test, (\ref{cond:step}) also relates to the quantities $\beta$ and $\chi$:~(\ref{cond:step})~holds~if $-1\leq \beta<-0.5$~and $\chi<-1$. We now specialize $\beta_t$ and $\chi_t$ to be polynomial in $t$, and demonstrate~that~the~conditions can be easily satisfied. 
	
\begin{lemma}\label{lem:10}
Suppose $\beta_t = c_1/t^{c_2}$ and $\chi_t = \beta_t^{c_3}$. Then,
\begin{enumerate}[topsep=2pt,itemsep=0em,partopsep=-2pt,parsep=0.5ex,label=(\alph*):,beginpenalty=10000]
\item (\ref{cond:step}) holds \textbf{if} $c_1>0$, $c_2\in (0.5, 1]$, $c_3>\frac{1}{c_2}$ $\Longrightarrow$ global convergence.
\item (\ref{cond:step}) and (\ref{cond:n:1}) hold \textbf{if} additionally $c_1> \frac{1}{1.5(1-\rho^\tau)}$ when $c_2=1$ $\Longrightarrow$ local convergence.
\item (\ref{cond:step}) and (\ref{cond:n:1}) hold with $\chi<1.5\beta$ \textbf{if} additionally $c_3>1.5\vee\frac{1}{c_2}$ $\Longrightarrow$ asymptotic~inference.
\end{enumerate}
\end{lemma}
The proof of the above lemma is immediate by noting that $\beta = -c_2$, $\chi = -c_2c_3$,~and~$\tbeta = c_1$ if $c_2 = 1$ and $\infty$ if $c_2<1$.~Thus, we omit it.
	
\end{remark}

\section{Numerical Experiments}\label{sec:5}

We provide experimental results in this section. We apply \texttt{AI-StoSQP} to both \mbox{benchmark}~constrained nonlinear optimization problems in CUTEst set \citep{Gould2014CUTEst} and to linearly/ nonlinearly constrained regression problems.~For regression problems, we explore both squared loss and logistic loss.~For conciseness, some of results are deferred to Appendix \ref{sec:exp:more}.$\hskip2cm$

\subsection{Benchmark constrained problems}\label{sec:5.1}

The CUTEst test set collects a number of nonlinear optimization problems with and without constraints.~We implement {eight} equality-constrained problems:~\texttt{MARATOS}, \texttt{ORTHREGB},~\texttt{HS7},~\texttt{HS48}, \texttt{HS78}, {\texttt{BT9}, \texttt{GENHS28}, \texttt{HS39}}. The solution of each problem is solved by IPOPT solver \citep{Waechter2006implementation} {with the initialization specified by the CUTEst package.~Note~that~the~{benchmark} problems may not have unique solutions;~however, we observed that by \mbox{initializing}~at~the same point, our StoSQP method consistently converges to the same~\mbox{solution}~as~IPOPT,~which is also a widely used (deterministic) SQP-based solver.}

For our method, we perform $10^5$ iterations and, at each step, we perform $\tau = 40$ randomized Kaczmarz steps to approximately solve QPs. Given the iterate $\bx_t$, we generate $\barg_t\sim\N(\nabla f_t, \sigma^2(I+\b1\b1^T))$, where $\b1\in\mR^d$ is an all-one vector. We also generate the $(i,j)$ and $(j,i)$ entries of $\barH_t$ from $\N((\nabla^2f_t)_{i,j}, \sigma^2)$. We vary $\sigma^2\in\{10^{-4}, 10^{-2}, 10^{-1}, 1\}$~and~let~$\beta_t =1/t^{0.501}$ (power slightly larger than $0.5$) and $\chi_t = \beta_t^2$. We randomly choose $\baralpha_t\sim\text{Uniform}([\beta_t, \eta_t])$~with $\eta_t=\beta_t+\chi_t$. 
For each problem, we aim to {perform inference for each individual \mbox{variable}~$\{\tx_i\}_{i=1}^d$} by setting the nominal coverage probability to 95\%. {Note from Remark \ref{rem:3} that $\Omega_{\bx,\bx}^\star$ may~vanish along the direction in the normal space $\text{span}(\tG^T)$. Thus, we consider inferring only those $\tx_i$ such that the canonical basis $\be_i \notin \text{span}(\tG^T)$.}
The confidence intervals are constructed~by estimating the limiting covariance using Theorem \ref{thm:5}. The performance of the method~is~measured by the {mean absolute error (MAE) $\|\bx_t-\tx\|$, the average coverage rate (Avg Cov)}~of~the confidence intervals among individuals $\tx_i$, the average~length~(Avg~Len)~of~the~confidence intervals, and {the computational flops per iteration}. We repeat the experiments 200 times~when computing the coverage rate.

\begin{table}[t!]
\centering
\begin{tabular}{|c|c|c|c|c|c|}
\hline &&&&& \\[-2.4ex]
Prob & $\sigma^2$ & MAE ($10^{-2}$) & Ave Cov (\%) & Ave Len ($10^{-2}$)& Flops/iter\\
\hline &&&&& \\[-2.4ex]
\multirow{4}{*}{MARATOS} & $10^{-4}$ & 0.04(0.03) & 96.00 & 0.11(<0.01) & \multirow{4}{*}{137.00} \\
& $10^{-2}$ &0.41(0.31) & 95.00 & 1.10(0.01) & \\
& $10^{-1}$ &1.42(1.07) & 93.50 & 3.48(0.09) & \\
& $1$ & 4.44(3.29) & 95.50 & 10.96(0.74) &\\
\hline &&&&& \\[-2.4ex]
\multirow{4}{*}{ORTHREGB} & $10^{-4}$ & 1.76(1.17) & 88.90 & 27.64(4.80) & \multirow{4}{*}{3867.40} \\
& $10^{-2}$ &4.44(3.51) & 96.42 & 58.02(12.64) & \\
& $10^{-1}$ &9.90(17.99) & 95.84 & 40.80(9.06) & \\
& $1$ & 38.19(44.51) & 96.72 & 24.36(32.92) & \\
\hline &&&&& \\[-2.4ex]
\multirow{4}{*}{HS7} & $10^{-4}$ & 0.02(0.01) & 93.00 & 0.03(<0.01) & \multirow{4}{*}{137.00} \\
& $10^{-2}$ &0.14(0.12) & 92.50 & 0.35(<0.01) & \\
& $10^{-1}$ &0.45(0.34) & 95.00 & 1.10(0.01) & \\
& $1$ & 1.28(0.98) & 97.00 & 3.48(0.09) & \\
\hline &&&&& \\[-2.4ex]
\multirow{4}{*}{HS48} & $10^{-4}$ & 0.03(0.01) & 94.50 & 0.02(0.01) & \multirow{4}{*}{379.00} \\
& $10^{-2}$ &0.25(0.11) & 95.70 & 0.24(0.11) & \\
& $10^{-1}$ &0.85(0.38) & 94.00 & 0.76(0.35) & \\
& $1$ & 2.51(1.25) & 97.50 & 2.41(1.12) & \\
\hline &&&&& \\[-2.4ex]
\multirow{4}{*}{HS78} & $10^{-4}$ & 0.01(<0.01) & 97.80 & 0.02(<0.01) & \multirow{4}{*}{434.01} \\
& $10^{-2}$ &0.15(0.07) & 99.10 & 0.17(0.04) & \\
& $10^{-1}$ &0.51(0.27) & 96.80 & 0.55(0.12) & \\
& $1$ & 1.41(0.69) & 99.10 & 1.74(0.37) & \\
\hline &&&&& \\[-2.4ex]
\multirow{4}{*}{BT9} & $10^{-4}$ & 0.06(0.03) & 93.00 & 0.05(<0.01) & \multirow{4}{*}{308.01} \\
& $10^{-2}$ &0.55(0.29) & 96.25 & 0.55(0.01) & \\
& $10^{-1}$ &1.91(0.92) & 95.50 & 1.76(0.07) & \\
& $1$ & 24.54(1.07) & 94.50 & 6.58(5.57) & \\
\hline &&&&& \\[-2.4ex]
\multirow{4}{*}{GENHS28} & $10^{-4}$ & 0.03(0.01) & 97.75 & 0.04(0.04) & \multirow{4}{*}{1244.07} \\
& $10^{-2}$ &0.26(0.09) & 98.33 & 0.42(0.37) & \\
& $10^{-1}$ &0.76(0.27) & 98.17 & 1.34(1.18) & \\
& $1$ & 2.44(0.94) & 97.83 & 4.22(3.73) & \\
\hline &&&&& \\[-2.4ex]
\multirow{4}{*}{HS39} & $10^{-4}$ & 0.06(0.03) & 93.00 & 0.05(<0.01) & \multirow{4}{*}{308.01} \\
& $10^{-2}$ &0.55(0.29) & 96.25 & 0.55(0.01) & \\
& $10^{-1}$ &1.91(0.92) & 95.50 & 1.76(0.07) & \\
& $1$ & 24.54(1.07) & 94.50 & 6.58(5.57) & \\
\hline
\end{tabular}
\caption{\hskip-2pt\textit{Results of eight benchmark CUTEst problems. \hskip-4ptWe measure performance using {the mean absolute error (MAE)}, the average coverage rate (Ave Cov) and the average~length~(Ave~Len)~of the confidence intervals, and {the computational flops per iteration}. Standard errors~are reported in the bracket for MAE and Ave Len.~We do not report standard errors for Ave Cov~as~they~are meaningless for a 0-1 indicator vector; the standard error is given by $\sqrt{r(1-r)}$ with $r$ being the coverage rate.~{The flops/iter is uniform over different noise level $\sigma^2$ and different~runs~(i.e., the standard error is 0).}
}}\label{tab:1}
\vskip-0.7cm
\end{table}

The results are summarized in Table \ref{tab:1}. From Table \ref{tab:1}, we have the following observations.
\begin{enumerate}[label=\textbf{(\alph*)},topsep=2pt,align=left, leftmargin=0pt, labelindent=0pt, itemindent=!,labelwidth=0pt,]
\setlength\itemsep{-2pt}
\item {In terms of MAE, our method achieves reasonably small MAE values across~all~problems. As the noise level $\sigma^2$ for the objective gradient and Hessian estimates gradually increases, the MAE also increases.~This aligns with our intuition, as noisier \mbox{estimates}~of~the~\mbox{objective}~quantities will require more samples, i.e., longer iterations, to ensure the accuracy of the~\mbox{estimate}~$\bx_t$.\;\;\;}

\item In terms of coverage rate, we observe for the majority of cases that our constructed~confidence intervals cover the true solution with probability of at least $95\%$, and the coverage~rate is robust to the sampling variance $\sigma^2$. 
{There are scenarios, such as \texttt{HS78}~and~\texttt{ORTHREGB}~($\sigma^2=10^{-4}$), where our confidence intervals may exhibit over- or under-coverage. This phenomenon can be attributed to two factors: (i) our limiting covariance estimate has an $O(\rho^\tau)$ bias~due~to sketching techniques, which may either inflate or deflate the estimated variance for each~individual variable, and (ii) we run only a limited number of steps (i.e., the standardized sequence may have not yet reached a stationary stage), given the problem's scale and the~challenges~inherent in the online inference task.} 
Nevertheless, our observation suggests that neglecting~the sketching randomness in the estimation of the limiting covariance~does~not obviously deteriorate the coverage rate.~\mbox{However},~\mbox{using}~(sparse)~\mbox{sketching}~\mbox{vectors} to~solve~QPs~as~in~\eqref{equ:pseduo}~is~computationally more efficient than exact second-order methods. 

\item In terms of confidence intervals' length, we see that the average length gradually increases as $\sigma^2$ increases.~When $\sigma^2$ increases from $10^{-4}$ to $1$, the length increases from~$10^{-4}$~to~$10^{-2}$. This outcome is expected, as the asymptotic covariance $\Xi^\star$ depends on $\cov(\nabla f(\tx; \xi))$~in~\eqref{equ:Lya:equ}.\;\;\;

\item {In terms of computational flops per iteration, it is uniform over different $\sigma^2$ and~independent runs. This quantity basically reflects the problem scale.}
\end{enumerate}

We testify the almost sure convergence rate of our method (Theorem \ref{thm:3}) in Appendix~\ref{sec:exp:more:1}.

\subsection{Constrained regression problems}\label{sec:5.2}

We implement our method on constrained regression problems, considering both linear~regression and logistic regression (see Example \ref{exp:1}). We also allow for either \mbox{linear}~\mbox{constraints}~$A\bx = \bd$ or nonlinear constraints $\|\bx\|^2=b$.~Therefore, there are four cases in total.~{We~\mbox{compare}~our~online inference method with offline constrained $M$-estimation.~For fair comparisons, we~solve~$M$-estimation problems, which are constrained finite-sum problems, using $\ell_1$-penalized SQP~methods with backtracking line search to select proper stepsizes \citep{Nocedal2006Numerical}.~While offline $M$-estimators enjoy asymptotic minimax optimal performance with the least \mbox{covariance} (as introduced in \eqref{equ:M:est}), the proposed online StoSQP method is promising due to its lower~per-iteration computational complexity. (Our method also achieves optimal performance \mbox{under}~appropriate setups; cf. Corollary \ref{cor:3}.)
}

To be specific, for each case (regression model + constraint type), we vary the \mbox{parameter}~dimension $d\hskip-1pt\in\hskip-1pt\{5,20,40,60\}$, and the true solution $\tx$ is  linearly spaced between 0 and~1.~For~each $d$, our method randomly samples a covariate $\xi_{\ba}\sim \mN(\0, 5I+\Sigma_a)$ at each step,~with~three~different choices of $\Sigma_a$ (also considered in \cite{Chen2020Statistical}). (i) Identity: $\Sigma_a = I$.~(ii)~Toeplitz: $[\Sigma_a]_{i,j}=r^{|i-j|}$. (iii) Equi-correlation:~$[\Sigma_a]_{i,j} = r$~for~$i\neq j$~and~$[\Sigma_a]_{i,i} =1$.~For~\mbox{linear}~constraints, we let $A\in\mR^{m\times d}$ with $m=\lceil\sqrt{d}\rceil$ and entries being independently generated~from~standard normal distribution.~For logistic models, we regularize~the~loss~by~a quadratic~penalty~with unit parameter.~{Following Section \ref{sec:5.1}, we perform inference for each individual \mbox{variable}~$\{\tx_i\}_{i=1}^d$ by setting the nominal coverage probability to 95\%.}
We also follow Section \ref{sec:5.1} to implement our method, including the number of iterations, the setup of the \mbox{stepsize}, and the number of sketching steps. 
{In contrast to online methods, the offline $M$-estimation method generates~all $10^5$ samples at once and uses those fixed samples to compute the estimator.~The~covariance~matrix is also estimated by a plug-in estimator.} We \mbox{report}~the~\mbox{results}~only~for~\mbox{$r=0.5$}~for~Toeplitz and $r=0.2$ for~Equi-correlation. The comprehensive comparisons between inexact and exact methods with varying $d, r$ and $\tau$ are reported in Section \ref{sec:exp:more:2}.

\begin{table}[thbp!]
\centering
\scriptsize{
\resizebox{\linewidth}{!}{\begin{tabular}{|c|c|c|c|c|c|c|c|}
\hline
&&&&&&& \\[-2.4ex]
Obj & Cons & $d$ & Design Cov & MAE $(10^{-2})$ & Ave Cov & Ave Len $(10^{-2})$ & Flops/iter\\
\hline &&&&&&& \\[-2.4ex]
\multirow{48}{*}{\vspace*{-1cm} Lin} & \multirow{24}{*}{\vspace*{-0.5cm} Lin} & \multirow{6}{*}{5} & \multirow{2}{*}{Identity} & 0.20(0.08) & 95.40 & 0.19(0.03) & $>1.6\times 10^7$\\
&&&& 3.09(1.19) & 89.00 & 2.43(0.39) & 380.00\\
\cline{4-8} &&&&&&& \\[-2.4ex]
&&& Toeplitz & 0.20(0.08) & 94.60 & 0.19(0.03) & $>1.7\times 10^7$\\
&&& $(r=0.5)$ & 2.84(1.08) & 90.70 & 2.39(0.41) & 380.00\\
\cline{4-8} &&&&&&& \\[-2.4ex]
&&& Equi-corr & 0.21(0.09) & 94.50 & 0.19(0.03) & $>1.6\times 10^7$\\
&&& $(r=0.2)$ & 3.03(1.04) & 90.70 & 2.41(0.40) & 380.00\\
\cline{3-8} &&&&&&& \\[-2.4ex]
&& \multirow{6}{*}{20} & \multirow{2}{*}{Identity} & 0.51(0.09) & 94.98 & 0.23(0.02) & $>7.1\times 10^7$\\
&&&& 6.82(1.18) & 93.67 & 2.87(0.22) & 2340.11\\
\cline{4-8} &&&&&&& \\[-2.4ex]
&&& Toeplitz & 0.52(0.10) & 94.35 & 0.23(0.02) & $>6.5\times 10^7$\\
&&& $(r=0.5)$ & 6.83(1.07) & 93.42 & 2.89(0.22) & 2340.11\\
\cline{4-8} &&&&&&& \\[-2.4ex]
&&& Equi-corr & 0.52(0.09) & 94.85 & 0.23(0.02) & $>6.3\times 10^7$\\
&&& $(r=0.2)$ & 6.70(1.10) & 94.40 & 2.89(0.23) & 2340.11\\
\cline{3-8} &&&&&&& \\[-2.4ex]
&& \multirow{6}{*}{40} & \multirow{2}{*}{Identity} & 0.75(0.09) & 94.99 & 0.23(0.01) & $>2.3\times10^8$\\
&&&& 10.02(1.15) & 94.32 & 3.01(0.14) & 7160.90 \\
\cline{4-8} &&&&&&& \\[-2.4ex]
&&& Toeplitz & 0.75(0.09) & 94.97 & 0.23(0.01) & $>2.0\times10^8$\\
&&& $(r=0.5)$ & 9.84(1.49) & 94.60 & 3.03(0.16) & 7160.90\\
\cline{4-8} &&&&&&& \\[-2.4ex]
&&& Equi-corr & 0.75(0.09) & 95.35 & 0.24(0.01) & $>1.9\times10^8$\\
&&& $(r=0.2)$ & 9.84(1.16) & 94.69 & 3.03(0.15) & 7160.90\\
\cline{3-8} &&&&&&& \\[-2.4ex]
&& \multirow{6}{*}{60} & \multirow{2}{*}{Identity} & 0.95(0.09) & 94.47 & 0.24(0.01) & $>4.4\times10^8$\\
&&&& 12.34(1.18) & 94.91 & 3.12(0.12) & 14382.86 \\
\cline{4-8} &&&&&&& \\[-2.4ex]
&&& Toeplitz & 0.94(0.09) & 94.81 & 0.24(0.01) & $>3.5\times10^8$\\
&&& $(r=0.5)$ & 12.29(1.18) & 95.21 & 3.14(0.12) & 14382.86\\
\cline{4-8} &&&&&&& \\[-2.4ex]
&&& Equi-corr & 0.94(0.10) & 95.07 & 0.24(0.01) & $>2.9\times10^8$\\
&&& $(r=0.2)$ & 12.07(1.22) & 95.66 & 3.14(0.13) & 14382.86\\
\cline{2-8} &&&&&&& \\[-2.4ex]
& \multirow{24}{*}{\vspace*{-0.5cm} Non} & \multirow{6}{*}{5} & \multirow{2}{*}{Identity} & 0.25(0.10) & 94.90 & 0.22(0.03) & $>9.3\times 10^6$\\
&&&& 3.25(1.10) & 93.40 & 2.82(0.37) & 330.00\\
\cline{4-8} &&&&&&& \\[-2.4ex]
&&& Toeplitz & 0.26(0.10) & 93.90 & 0.23(0.03) & $>9.9\times 10^6$\\
&&& $(r=0.5)$ & 3.16(1.01) & 95.00 & 2.88(0.37) & 330.00\\
\cline{4-8} &&&&&&& \\[-2.4ex]
&&& Equi-corr & 0.25(0.08) & 94.80 & 0.23(0.03) & $>9.7\times 10^6$\\
&&& $(r=0.2)$ & 3.21(1.07) & 93.60 & 2.85(0.37) & 330.00\\
\cline{3-8} &&&&&&& \\[-2.4ex]
&& \multirow{6}{*}{20} & \multirow{2}{*}{Identity} & 0.56(0.09) & 94.65 & 0.25(0.01) & $>3.4\times 10^7$\\
&&&& 7.08(1.11) & 94.55 & 3.14(0.09) & 2100.07\\
\cline{4-8} &&&&&&& \\[-2.4ex]
&&& Toeplitz & 0.56(0.10) & 94.83 & 0.25(0.01) & $>3.7\times 10^7$\\
&&& $(r=0.5)$ & 7.15(1.11) & 95.35 & 3.18(0.08) & 2100.07\\
\cline{4-8} &&&&&&& \\[-2.4ex]
&&& Equi-corr & 0.56(0.09) & 95.23 & 0.25(0.01) & $>3.6\times 10^7$\\
&&& $(r=0.2)$ & 7.11(1.14) & 94.77 & 3.18(0.08) & 2100.07\\
\cline{3-8} &&&&&&& \\[-2.4ex]
&& \multirow{6}{*}{40} & \multirow{2}{*}{Identity} & 0.79(0.09) & 95.23 & 0.25(0.01) & $>6.7\times10^7$\\
&&&& 10.32(1.15) & 94.94 & 3.23(0.06) & 6560.62 \\
\cline{4-8} &&&&&&& \\[-2.4ex]
&&& Toeplitz & 0.81(0.09) & 95.20 & 0.25(0.01) & $>8.0\times10^7$\\
&&& $(r=0.5)$ & 10.34(1.20) & 95.45 & 3.27(0.06) & 6560.62\\
\cline{4-8} &&&&&&& \\[-2.4ex]
&&& Equi-corr & 0.81(0.09) & 94.94 & 0.25(0.01) & $>8.0\times10^7$\\
&&& $(r=0.2)$ & 10.53(1.17) & 94.99 & 3.28(0.06) & 6560.62\\
\cline{3-8} &&&&&&& \\[-2.4ex]
&& \multirow{6}{*}{60} & \multirow{2}{*}{Identity} & 0.99(0.09) & 94.89 & 0.25(0.01) & $>1.0\times10^8$\\
&&&& 12.95(1.28) & 95.04 & 3.30(0.06) & 13422.14 \\
\cline{4-8} &&&&&&& \\[-2.4ex]
&&& Toeplitz & 1.00(0.09) & 94.82 & 0.25(0.01) & $>1.1\times10^8$\\
&&& $(r=0.5)$ & 12.80(1.29) & 95.46 & 3.34(0.05) & 13422.14\\
\cline{4-8} &&&&&&& \\[-2.4ex]
&&& Equi-corr & 0.99(0.09) & 95.28 & 0.25(0.01) & $>1.2\times10^8$\\
&&& $(r=0.2)$ & 12.88(1.20) & 95.63 & 3.35(0.05) & 13422.14\\
\hline
\end{tabular}}
}
\vspace{-0.2cm}
\caption{\textit{Comparison results of online StoSQP and offline $M$-estimation for constrained~regression problems (linear models). For each cell, the top row shows the result of the $M$-estimator, while the bottom row shows the result of StoSQP.}}\label{tab:2}
\vspace{-0.2cm}
\end{table}

\begin{table}[thbp!]
\centering
\scriptsize{
\resizebox{\linewidth}{!}{\begin{tabular}{|c|c|c|c|c|c|c|c|}
\hline
&&&&&&& \\[-2.4ex]
Obj & Cons & $d$ & Design Cov & MAE $(10^{-2})$ & Ave Cov & Ave Len $(10^{-2})$ & Flops/iter\\
\hline &&&&&&& \\[-2.4ex]
\multirow{48}{*}{\vspace*{-1cm} Logit} & \multirow{24}{*}{\vspace*{-0.5cm} Lin} & \multirow{6}{*}{5} & \multirow{2}{*}{Identity} & 0.13(0.06) & 95.00 & 0.13(0.03) & $>1.8\times 10^7$\\
&&&& 2.14(0.86) & 86.40 & 1.56(0.42) & 380.00\\
\cline{4-8} &&&&&&& \\[-2.4ex]
&&& Toeplitz & 0.13(0.06) & 95.60 & 0.12(0.03) & $>1.2\times 10^7$\\
&&& $(r=0.5)$ & 1.97(0.77) & 89.20 & 1.53(0.42) & 380.00\\
\cline{4-8} &&&&&&& \\[-2.4ex]
&&& Equi-corr & 0.14(0.06) & 93.40 & 0.12(0.03) & $>1.3\times 10^7$\\
&&& $(r=0.2)$ & 2.03(0.81) & 88.00 & 1.55(0.41) & 380.00\\
\cline{3-8} &&&&&&& \\[-2.4ex]
&& \multirow{6}{*}{20} & \multirow{2}{*}{Identity} & 0.31(0.05) & 95.20 & 0.14(0.01) & $>1.0\times 10^8$\\
&&&& 4.28(0.79) & 92.25 & 1.73(0.15) & 2340.11\\
\cline{4-8} &&&&&&& \\[-2.4ex]
&&& Toeplitz & 0.30(0.05) & 94.60 & 0.13(0.01) & $>9.4\times 10^7$\\
&&& $(r=0.5)$ & 4.01(0.83) & 92.28 & 1.65(0.15) & 2340.11\\
\cline{4-8} &&&&&&& \\[-2.4ex]
&&& Equi-corr & 0.29(0.05) & 95.27 & 0.13(0.01) & $>9.8\times 10^7$\\
&&& $(r=0.2)$ & 3.86(0.73) & 93.20 & 1.61(0.14) & 2340.11\\
\cline{3-8} &&&&&&& \\[-2.4ex]
&& \multirow{6}{*}{40} & \multirow{2}{*}{Identity} & 0.40(0.05) & 95.05 & 0.13(0.01) & $>3.6\times10^8$\\
&&&& 5.13(0.64) & 94.94 & 1.59(0.09) & 7160.90 \\
\cline{4-8} &&&&&&& \\[-2.4ex]
&&& Toeplitz & 0.39(0.05) & 94.65 & 0.12(0.01) & $>3.5\times10^8$\\
&&& $(r=0.5)$ & 4.89(0.68) & 94.84 & 1.51(0.09) & 7160.90\\
\cline{4-8} &&&&&&& \\[-2.4ex]
&&& Equi-corr & 0.35(0.04) & 95.07 & 0.11(0.01) & $>3.2\times10^8$\\
&&& $(r=0.2)$ & 4.28(0.61) & 95.81 & 1.38(0.09) & 7160.90\\
\cline{3-8} &&&&&&& \\[-2.4ex]
&& \multirow{6}{*}{60} & \multirow{2}{*}{Identity} & 0.48(0.05) & 94.74 & 0.12(0.01) & $>6.2\times10^8$\\
&&&& 5.85(0.66) & 95.20 & 1.51(0.08) & 14382.86 \\
\cline{4-8} &&&&&&& \\[-2.4ex]
&&& Toeplitz & 0.45(0.04) & 94.89 & 0.11(0.01) & $>6.2\times10^8$\\
&&& $(r=0.5)$ & 5.40(0.61) & 95.66 & 1.42(0.08) & 14382.86\\
\cline{4-8} &&&&&&& \\[-2.4ex]
&&& Equi-corr & 0.39(0.03) & 95.00 & 0.10(0.01) & $>5.9\times10^8$\\
&&& $(r=0.2)$ & 4.61(0.52) & 96.19 & 1.24(0.07) & 14382.86\\
\cline{2-8} &&&&&&& \\[-2.4ex]

& \multirow{24}{*}{\vspace*{-0.5cm} Non} & \multirow{6}{*}{5} & \multirow{2}{*}{Identity} & 0.17(0.07) & 94.70 & 0.17(0.02) & $>5.2\times 10^6$\\
&&&& 2.74(0.91) & 89.30 & 2.08(0.27) & 330.00\\
\cline{4-8} &&&&&&& \\[-2.4ex]
&&& Toeplitz & 0.18(0.07) & 94.30 & 0.16(0.02) & $>5.0\times 10^6$\\
&&& $(r=0.5)$ & 2.45(0.85) & 92.00 & 2.02(0.28) & 330.00\\
\cline{4-8} &&&&&&& \\[-2.4ex]
&&& Equi-corr & 0.18(0.07) & 94.50 & 0.16(0.02) & $>5.4\times 10^6$\\
&&& $(r=0.2)$ & 2.47(0.93) & 91.10 & 2.05(0.28) & 330.00\\
\cline{3-8} &&&&&&& \\[-2.4ex]
&& \multirow{6}{*}{20} & \multirow{2}{*}{Identity} & 0.34(0.06) & 94.98 & 0.15(0.01) & $>1.8\times 10^7$\\
&&&& 4.64(0.85) & 93.20 & 1.92(0.07) & 2100.07\\
\cline{4-8} &&&&&&& \\[-2.4ex]
&&& Toeplitz & 0.33(0.05) & 94.93 & 0.15(0.01) & $>1.8\times 10^7$\\
&&& $(r=0.5)$ & 4.38(0.73) & 93.52 & 1.83(0.07) & 2100.07\\
\cline{4-8} &&&&&&& \\[-2.4ex]
&&& Equi-corr & 0.32(0.05) & 94.50 & 0.14(0.01) & $>1.7\times 10^7$\\
&&& $(r=0.2)$ & 4.31(0.72) & 92.93 & 1.78(0.07) & 2100.07\\
\cline{3-8} &&&&&&& \\[-2.4ex]
&& \multirow{6}{*}{40} & \multirow{2}{*}{Identity} & 0.45(0.05) & 94.60 & 0.14(0.01) & $>3.6\times10^7$\\
&&&& 5.82(0.71) & 93.62 & 1.72(0.06) & 6560.62 \\
\cline{4-8} &&&&&&& \\[-2.4ex]
&&& Toeplitz & 0.43(0.05) & 94.54 & 0.13(0.01) & $>3.7\times10^7$\\
&&& $(r=0.5)$ & 5.47(0.77) & 93.88 & 1.64(0.06) & 6560.62\\
\cline{4-8} &&&&&&& \\[-2.4ex]
&&& Equi-corr & 0.39(0.04) & 94.89 & 0.12(0.01) & $>3.4\times10^7$\\
&&& $(r=0.2)$ & 4.92(0.73) & 94.35 & 1.50(0.06) & 6560.62\\
\cline{3-8} &&&&&&& \\[-2.4ex]
&& \multirow{6}{*}{60} & \multirow{2}{*}{Identity} & 0.51(0.05) & 95.09 & 0.13(0.01) & $>5.6\times10^7$\\
&&&& 6.48(0.71) & 94.39 & 1.60(0.06) & 13422.14 \\
\cline{4-8} &&&&&&& \\[-2.4ex]
&&& Toeplitz & 0.47(0.04) & 95.36 & 0.12(0.01) & $>5.8\times10^7$\\
&&& $(r=0.5)$ & 6.16(0.65) & 93.99 & 1.51(0.05) & 13422.14\\
\cline{4-8} &&&&&&& \\[-2.4ex]
&&& Equi-corr & 0.42(0.04) & 95.13 & 0.11(0.01) & $>5.6\times10^7$\\
&&& $(r=0.2)$ & 5.30(0.66) & 94.34 & 1.32(0.06) & 13422.14\\
\hline
\end{tabular}}
}
\vspace{-0.2cm}
\caption{\textit{Comparison results of online StoSQP and offline $M$-estimation for constrained~regression problems (logistic models). For each cell, the top row shows the result of the $M$-estimator, while the bottom row shows the result of StoSQP.}}\label{tab:3}
\vspace{-0.2cm}
\end{table}

We summarize the comparison results in Tables \ref{tab:2} and \ref{tab:3}, {including  the mean absolute error (MAE) $\|\bx_t-\tx\|$, the average coverage rate (Avg Cov)~of~the confidence intervals~among individuals $\tx_i$, the average~length~(Avg~Len)~of~the~confidence intervals, and the computational flops per iteration}. From Tables \ref{tab:2} and \ref{tab:3}, we summarize the following observations.
{ \begin{enumerate}[label=\textbf{(\alph*)},topsep=2pt,align=left, leftmargin=0pt, labelindent=0pt, itemindent=!,labelwidth=0pt,]
\setlength\itemsep{-2pt}
\item In terms of MAE and Ave Len, offline constrained $M$-estimators achieve results that are \textit{an order of magnitude smaller} than those of online StoSQP. This gap aligns with~our~analysis~from two perspectives. First, $M$-estimators exhibit $\sqrt{n}$-consistency and optimal \mbox{asymptotic}~covariance, while StoSQP estimators exhibit only $\sqrt{1/\baralpha_n}$-consistency, where $\baralpha_n$ denotes the~stepsize (in this case, $\sqrt{n^{0.501}}$). Clearly, as long as $\baralpha_n \not\asymp 1/n$, the covariance of StoSQP estimators~is not comparable to that of $M$-estimators when the estimators are scaled by the same scalar. Second, the randomization of the sketching solver within StoSQP introduces additional uncertainty to the estimators, further enlarging the asymptotic covariance~(cf.~Corollary~\ref{cor:3}),~although this enlargement is controlled by the precision of the sketching solver and decreases exponentially with the number of sketching steps.

\item In terms of Ave Cov, the proposed online StoSQP method achieves promising coverage rates that are very close to 95\% for both linear and logistic models as well as for both~linear~and nonlinear constraints, matching the performance of offline $M$-estimators.~The~only~\mbox{potential}~exception occurs when $d=5$, where StoSQP may exhibit undercoverage (around 90\%) for linear and logistic models.~Upon closer examination of these scenarios, we find that~the~\mbox{condition}~number of the Lagrangian Hessian of these problems exceeds $d^3 = 125$, indicating that these~problems are ill-conditioned and difficult to solve, particularly when using a sketching solver that seems overkill~(see \eqref{nnequ:1}). Furthermore, SGD-based estimators are also observed to~\mbox{exhibit}~undercoverage for various problems due to significant challenges of online inference tasks \citep{Zhu2021Online}.~To~our knowledge, StoSQP is the first method capable of conducting online~statistical inference for constrained model parameters. While projection-based estimators~\citep{Duchi2021Asymptotic, Davis2024Asymptotic} may demonstrate similar asymptotic normality, estimating their limiting covariance remains unclear.

\item In terms of computational flops per iteration, offline $M$-estimation involves processing the full batch of samples, resulting in significant computational and memory costs. In~\mbox{contrast},~our online method processes a single sample, requiring significantly fewer computations. Additionally, the inexact sketching solver for solving Newton systems further reduces the dominant computational cost of the 
proposed second-order method.~\mbox{Overall},~the~\mbox{reduced}~\mbox{computational}~flops per iteration are a major advantage of our online StoSQP method over offline methods.$\hskip1cm$
\end{enumerate}

Further discussions of exact and inexact StoSQP methods are provided in Section \ref{sec:exp:more:2}.
}

\section{Conclusion and Future Work}\label{sec:6}

We performed statistical inference of nonlinearly constrained stochastic optimization problems using a fully online second-order method called Stochastic Sequential Quadratic Programming (StoSQP). In each iteration, the scheme selects a proper adaptive stepsize and inexactly solves the Newton system (a quadratic program) by a randomized~\mbox{sketching}~solver.~\mbox{Consequently},~the considered method is more adaptive and computationally efficient than existing exact second-order methods. 
For this method, we established an almost sure convergence rate~and~iteration complexity, and proved the asymptotic normality property for the last iterate.~We~\mbox{observed}~that although the limiting covariance is worse than the minimax optimum achieved by constrained $M$-estimators and online projection-based estimators, the gap decays exponentially fast in terms of the number of iterations employed for the sketching solver (e.g.,~the~\mbox{covariance}~matches the optimum if using exact QP solvers).~Additionally, we analyzed a plug-in covariance~\mbox{matrix} estimator. Our analysis precisely quantified the uncertainty of the stochastic process \mbox{generated} by StoSQP methods, which encompasses the randomness of sampling as well as the computation (sketching and stepsize). The randomness of computation is particularly important for second-order methods to be efficient in practice. With our results, one can apply \texttt{AI-StoSQP}~to perform online inference for constrained~estimation problems.

As for future directions, it is of interest to provide a non-asymptotic analysis for~StoSQP methods. Such a result would complement our analysis by bounding the distance between~the distribution of $(\bx_t, \blambda_t)$ and the normal distribution for any given $t$. 
Furthermore, recent~literature on online inference has explored different test statistics, whose asymptotic distributions rely on the application of the Functional Central Limit Theorem \citep{Lee2022Fast, Luo2022Covariance, Li2023Online, Roy2023Online, Chen2024Online}. \mbox{Establishing}~a~functional CLT for second-order methods and studying the limiting distribution of random scaling estimators is also an interesting research direction. 
Finally, incorporating nonlinear inequality constraints into the problems and developing second-order methods without projections also deserves further study in future work.

\acks{SN would like to acknowledge Xinchen Du, Yuefeng Han, and Wanrong Zhu for the helpful discussions of the work.
MWM would like to acknowledge the NSF, ONR, and a J. P. Morgan Chase Faculty Research Award for providing partial support of this work. This work was also supported by the U.S. Department of Energy, Office of Science, Office of Advanced Scientific Computing Research, Scientific Discovery through Advanced Computing (SciDAC) program, under Contract Number DE-AC02-05CH11231 at Lawrence Berkeley National Laboratory.}

\bibliography{ref}

\newpage
\appendix

\section{An Example of Stepsize Selection Scheme}\label{app:1}

We consider selecting a stepsize to decrease the penalized objective 
\begin{equation*}
\phi_{\nu}(\bx;\xi) \coloneqq \nu F(\bx;\xi) + \|c(\bx)\|.
\end{equation*}
We note that decreasing the objective $F(\bx;\xi)$ only is not reasonable for constrained problems, since we may violate constraints arbitrarily. The local linear approximation of $\phi_{\nu}(\bx;\xi)$ at $(\bx_t;\xi_t)$ along the direction $\barDelta\bx_t$ is 
\begin{equation*}
\phi^{\text{loc}}_{\nu}(\bx_t;\xi_t, \barDelta\bx_t) \coloneqq\nu(F(\bx_t;\xi_t)+\barg_t^T\barDelta\bx_t) +\|c_t+G_t\barDelta\bx_t\|.
\end{equation*}
We can further define the local model reduction, a negative quantity for sufficiently small~$\nu>0$ and approximation error, as
\begin{align}\label{equ:reduction}
\Delta\phi_{\nu}^{\text{loc}}(\bx_t;\xi_t,\barDelta\bx_t) & \coloneqq \phi^{\text{loc}}_{\nu}(\bx_t;\xi_t, \barDelta\bx_t) - \phi^{\text{loc}}_{\nu}(\bx_t;\xi_t, \0)  \nonumber\\
& = \nu \barg_t^T\barDelta\bx_t + \|c_t+G_t\barDelta\bx_t\| - \|c_t\|.
\end{align}
For a given scalar $\kappa_t\in(0, 1)$, we select $\baralpha_t$ such that $\phi_{\nu}(\bx_t+\baralpha_t\barDelta\bx_t;\xi_t)$ decreases $\phi_{\nu}(\bx_t;\xi_t)$ by at least a factor of $\kappa_t\baralpha_t$ of the local model reduction $\Delta\phi_{\nu}^{\text{loc}}$ (so called the Armijo condition). Specifically, we require 
\begin{equation}\label{equ:armijo}
\phi_{\nu}(\bx_t+\baralpha_t\barDelta\bx_t;\xi_t) \leq \phi_{\nu}(\bx_t;\xi_t) + \kappa_t\baralpha_t\cdot \Delta\phi_{\nu}^{\text{loc}}(\bx_t;\xi_t,\barDelta\bx_t).
\end{equation}
To satisfy \eqref{equ:armijo}, we suppose $\nabla F(\bx;\xi_t)$ and $G(\bx)$ are local Lipschitz continuous around~$\bx_t$,~and suppose $\baralpha_t\leq 1$. Then, there exists a constant $\Upsilon_{\nu,t}>0$, such that
\begin{align*}
& \phi_{\nu}(\bx_t  +\baralpha_t\barDelta\bx_t;\xi_t) = \nu F(\bx_t+\baralpha_t\barDelta\bx;\xi_t) + \|c(\bx_t+\baralpha_t\barDelta\bx)\| \\ 
& \leq \phi_{\nu}(\bx_t;\xi_t) + \nu\baralpha_t\barg_t^T\barDelta\bx_t + \|c_t+\baralpha_tG_t\barDelta\bx_t\| - \|c_t\| + \Upsilon_{\nu,t}\baralpha_t^2\|\barDelta\bx_t\|^2 \;\; (\text{since } \nabla F, G \text{ are Lip})\\
& \leq \phi_{\nu}(\bx_t;\xi_t) + \nu\baralpha_t\barg_t^T\barDelta\bx_t  + \baralpha_t \|c_t+G_t\barDelta\bx_t\| - \baralpha_t\|c_t\| + \Upsilon_{\nu,t}\baralpha_t^2\|\barDelta\bx_t\|^2 \;\; (\text{since } \baralpha_t\leq 1)\\
& \stackrel{\mathclap{\eqref{equ:reduction}}}{=}\;  \phi_{\nu}(\bx_t;\xi_t) +\baralpha_t\cdot \Delta\phi_{\nu}^{\text{loc}}(\bx_t;\xi_t,\barDelta\bx_t) + \Upsilon_{\nu,t}\baralpha_t^2\|\barDelta\bx_t\|^2.
\end{align*}
Therefore, \eqref{equ:armijo} is satisfied as long as
\begin{equation}\label{equ:step}
\baralpha_t \leq \frac{(\kappa_t-1)\cdot \Delta\phi_{\nu}^{\text{loc}}(\bx_t;\xi_t,\barDelta\bx_t)}{\Upsilon_{\nu,t}\|\barDelta\bx_t\|^2} \wedge 1 \eqqcolon \baralpha_{t, thres}.
\end{equation}
The Lipschitz constant $\Upsilon_{\nu,t}$ can be estimated around $\bx_t$ \citep{Curtis2018Exploiting}~or simply prespecified as a large constant. The condition \eqref{equ:step} leads us to propose \mbox{$\baralpha_t \coloneqq \text{Proj}_{[\beta_t, \eta_t]}(\baralpha_{t, thres})$}. See \cite{Berahas2021Sequential, Berahas2023Stochastic, Curtis2021Inexact} for detailed random projections and~\cite{Hong2023Constrained} for adaptive selection of parameters of line search functions (e.g. $\nu$).

\section{Preparation Lemmas}\label{appen:A}

\begin{lemma}\label{aux:lem:5}
Suppose $\{\varphi_i\}_i$ is a positive sequence that satisfies $\lim\limits_{i\rightarrow\infty}i(1 - \varphi_{i-1}/\varphi_i) = \varphi$. Then, for any $p \geq 0$, we have $\lim\limits_{i\rightarrow\infty}i\rbr{1 - \varphi_{i-1}^p/\varphi_i^p} = p\cdot \varphi$.
\end{lemma}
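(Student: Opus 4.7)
The plan is to reduce the ratio $\varphi_{i-1}^p/\varphi_i^p$ to a Taylor expansion in the small quantity $x_i := 1 - \varphi_{i-1}/\varphi_i$. By hypothesis $i x_i \to \varphi$, which in particular forces $x_i = O(1/i)$ and hence $x_i \to 0$. I would then write
\begin{equation*}
1 - \frac{\varphi_{i-1}^p}{\varphi_i^p} = 1 - (1 - x_i)^p = p x_i + R_p(x_i),
\end{equation*}
where $R_p(x_i)$ collects the higher-order terms in the Taylor expansion of $(1-x)^p$ about $x=0$.

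The key step is to show the remainder $R_p(x_i)$ is $O(x_i^2)$ uniformly for all sufficiently large $i$ (so once $|x_i|$ is below some fixed small threshold, one may apply the standard estimate $|(1-x)^p - 1 + p x| \leq C_p x^2$ coming from the second-order Taylor bound, valid for any $p\geq 0$ on a neighborhood of $0$). Multiplying through by $i$,
\begin{equation*}
i\left(1 - \frac{\varphi_{i-1}^p}{\varphi_i^p}\right) = p\cdot(i x_i) + i\cdot O(x_i^2) = p\cdot(i x_i) + O(1/i),
\end{equation*}
since $i\cdot x_i^2 = (i x_i)\cdot x_i = O(1)\cdot O(1/i)$. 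Sending $i\to\infty$ and using $i x_i \to \varphi$ yields the claim.

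For integer $p$, the proof is actually cleaner and avoids invoking any Taylor remainder: one uses the factorization
\begin{equation*}
1 - r_i^p = (1 - r_i)\bigl(1 + r_i + r_i^2 + \cdots + r_i^{p-1}\bigr),
\end{equation*}
with $r_i = \varphi_{i-1}/\varphi_i \to 1$, so the bracketed sum tends to $p$ and the conclusion follows directly. The only mild obstacle is handling arbitrary real $p \geq 0$ (in particular the fractional exponents $p = 1/2$ and $p = 3/2$ that appear elsewhere in the paper), which is precisely why the Taylor bound for $(1-x)^p$ on a small neighborhood of $0$ is the most uniform tool. The case $p=0$ is trivial since both sides are zero.
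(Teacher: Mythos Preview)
Your proof is correct and follows essentially the same approach as the paper: both write $\varphi_{i-1}/\varphi_i = 1 - x_i$ with $x_i = \varphi/i + o(1/i)$ and then Taylor-expand $(1-x_i)^p$ to first order. You provide more detail on controlling the remainder, and your separate factorization argument for integer $p$ is a nice bonus, but the core idea is identical.
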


\begin{lemma}\label{aux:lem:1}
Let $\{\varphi_i\}_i$ be a positive sequence. If $\lim\limits_{i\rightarrow\infty}i(1 - \varphi_{i-1}/\varphi_i) = \varphi<0$, then~\mbox{$\lim\limits_{i\rightarrow\infty}\varphi_i = 0$}.
\end{lemma}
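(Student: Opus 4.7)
The plan is to show $\log \varphi_i \to -\infty$ by telescoping. Since $\{\varphi_i\}$ is positive, write
\[
\log \varphi_i = \log \varphi_0 + \sum_{j=1}^{i} \log\!\left(\frac{\varphi_j}{\varphi_{j-1}}\right),
\]
and bound each summand using the hypothesis $i(1-\varphi_{i-1}/\varphi_i)\to\varphi<0$.

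Fix $\epsilon \in (0,\, -\varphi/2)$. By the hypothesis there exists $N$ such that for all $j \geq N$,
\[
\frac{\varphi_{j-1}}{\varphi_j} = 1 - \frac{\varphi + \delta_j}{j}, \qquad |\delta_j| < \epsilon.
\]
In particular, $\varphi_{j-1}/\varphi_j > 1$ for $j \geq N$ (so the sequence is eventually strictly decreasing), and the quantity $(\varphi+\delta_j)/j$ is negative and bounded in magnitude by $(|\varphi|+\epsilon)/j$, which tends to zero. Using $\log(1-x) = -x + O(x^2)$ uniformly for $|x| \leq 1/2$, I would deduce
\[
\log\!\left(\frac{\varphi_j}{\varphi_{j-1}}\right) = -\log\!\left(1 - \frac{\varphi+\delta_j}{j}\right) = \frac{\varphi+\delta_j}{j} + O\!\left(\frac{1}{j^2}\right) \leq \frac{\varphi + \epsilon}{j} + \frac{C}{j^2}
\]
for an absolute constant $C$ and all $j \geq N$.

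Summing from $N$ to $i$ and combining with the finite term $\log \varphi_{N-1}$ gives
\[
\log \varphi_i \;\leq\; \log \varphi_{N-1} + (\varphi + \epsilon)\sum_{j=N}^{i}\frac{1}{j} + C\sum_{j=N}^{i}\frac{1}{j^2}.
\]
The second sum is bounded; the first sum diverges like $\log i$ and is multiplied by $\varphi+\epsilon<0$. Thus $\log \varphi_i \to -\infty$, which is equivalent to $\varphi_i \to 0$. No step here looks delicate — the only care needed is to ensure $\epsilon$ is chosen small enough (strictly less than $|\varphi|/2$) so that $\varphi+\epsilon$ is still negative, and to justify the Taylor expansion uniformly for $j \geq N$, both of which are routine.
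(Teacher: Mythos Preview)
Your argument is correct. The telescoping of $\log\varphi_i$, the uniform Taylor bound $-\log(1-x_j)=x_j+O(x_j^2)$ for $|x_j|\le 1/2$, and the conclusion via divergence of the harmonic series all go through. (A minor point: $\epsilon<|\varphi|$ already suffices to make $\varphi+\epsilon<0$; the factor $1/2$ is harmless but not needed.)

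The paper's proof takes a different, shorter route: it first invokes Lemma~\ref{aux:lem:5} to deduce $\lim_{i\to\infty} i\bigl(1-\varphi_{i-1}^p/\varphi_i^p\bigr)=p\varphi$ for any $p>0$, then picks $p$ large enough that $p\varphi<-1$ and applies Raabe's test to conclude $\sum_i\varphi_i^p<\infty$, hence $\varphi_i\to 0$. Your approach is more self-contained --- it avoids both the auxiliary power lemma and the named convergence test, essentially re-deriving the relevant piece of Raabe's test by hand. As a bonus, your inequality actually yields the quantitative estimate $\varphi_i = O(i^{\varphi+\epsilon})$ for every $\epsilon>0$, which the paper's argument does not make explicit. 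The paper's version is terser given its prerequisites; yours is more elementary and more informative.
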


\begin{lemma}\label{aux:lem:2}
Let $\{\phi_i\}_i$, $\{\varphi_i\}_i$, $\{\sigma_i\}_i$ be three positive sequences. Suppose
\begin{equation}\label{pequ:A1}
\lim\limits_{i\rightarrow \infty} i\rbr{1 - \phi_{i-1}/\phi_i} = \phi<0,\quad \quad\lim\limits_{i\rightarrow\infty}\varphi_i = 0, \quad\quad \lim\limits_{i\rightarrow \infty} i\varphi_i = \tvarphi
\end{equation}
for a constant $\phi$ and a (possibly infinite) constant $\tvarphi \in(0, \infty]$. For any $l\geq 1$, if we further~have $\sum_{k=1}^{l}\sigma_k + p\phi/\tvarphi>0$ for some constant $p\in(0,1]$, then the following results hold as $t\rightarrow \infty$
\begin{enumerate}[topsep=1pt,itemsep=0em,partopsep=-2pt,parsep=1ex,label=(\alph*):,beginpenalty=10000]
\item When $p=1$,
\begin{equation}\label{pequ:A2}
\begin{aligned}
& \frac{1}{\phi_t}\sum_{i=0}^{t}\prod_{j = i+1}^t\prod_{k=1}^{l}(1-\varphi_j\sigma_k)\varphi_i\phi_i \longrightarrow \frac{1}{\sum_{k=1}^{l}\sigma_k + \phi/\tvarphi},\\
& \frac{1}{\phi_t}\cbr{\sum_{i=0}^{t}\prod_{j = i+1}^t\prod_{k=1}^{l}(1-\varphi_j\sigma_k)\varphi_i\phi_i a_i + b\cdot \prod_{j = 0}^t\prod_{k=1}^{l}(1-\varphi_j\sigma_k) }  \longrightarrow 0,
\end{aligned}
\end{equation}
where the second result holds for any constant $b$ and sequence $\{a_t\}_t$ such that $a_t\rightarrow 0$.

\item When $p\in (0,1)$,
\begin{equation}\label{pequ:A2:new}
\begin{aligned}
& \frac{1}{\phi_t^p}\sum_{i=0}^{t}\prod_{j = i+1}^t\prod_{k=1}^{l}(1-\varphi_j\sigma_k)\varphi_i\phi_i \longrightarrow 0,\\
& \frac{1}{\phi_t^p}\cbr{\sum_{i=0}^{t}\prod_{j = i+1}^t\prod_{k=1}^{l}(1-\varphi_j\sigma_k)\varphi_i\phi_i^p a_i + b\cdot \prod_{j = 0}^t\prod_{k=1}^{l}(1-\varphi_j\sigma_k) }  \longrightarrow 0,
\end{aligned}
\end{equation}
where the second result holds for any constant $b$ and sequence $\{a_t\}_t$ such that $a_t\rightarrow 0$.
\end{enumerate}

\end{lemma}

\begin{lemma}\label{aux:lem:3}
For any scalars $a, b$, we have $P\rbr{a<\N(0,1)\leq b} \leq b-a$. Furthermore, if $0<a\leq b$, then $P\rbr{a<\N(0,1)\leq b} \leq b/a-1$.
\end{lemma}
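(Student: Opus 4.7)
}

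The plan is to write the probability as an integral of the standard normal density $\phi(x) = \frac{1}{\sqrt{2\pi}} e^{-x^2/2}$, namely
\begin{equation*}
P\bigl(a < \N(0,1) \leq b\bigr) = \int_a^b \phi(x)\, dx,
\end{equation*}
and then bound the integrand in two different ways depending on which inequality we target. In both cases we may assume $a \leq b$, since otherwise both sides of the claimed inequalities are trivial (the left side equals $0$).

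For the first bound, I would simply observe that $\phi(x) \leq \frac{1}{\sqrt{2\pi}} < 1$ uniformly in $x \in \mathbb{R}$, since $\phi$ attains its maximum at $x = 0$. Integrating this pointwise bound over $[a,b]$ gives
\begin{equation*}
\int_a^b \phi(x)\, dx \leq \frac{1}{\sqrt{2\pi}} (b-a) \leq b-a,
\end{equation*}
which is the first assertion.

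For the second bound, I would use the fact that on $[0,\infty)$ the density $\phi$ is strictly decreasing, so for $0 < a \leq b$ we have $\phi(x) \leq \phi(a)$ for every $x \in [a,b]$. This yields
\begin{equation*}
\int_a^b \phi(x)\, dx \leq \phi(a)(b-a).
\end{equation*}
The final ingredient is the elementary pointwise bound $\phi(a) \leq 1/a$ for all $a>0$, which is equivalent to $a e^{-a^2/2} \leq \sqrt{2\pi}$. I would justify this by noting that the function $a \mapsto a e^{-a^2/2}$ on $(0,\infty)$ attains its maximum at $a=1$ with value $e^{-1/2} < 1 < \sqrt{2\pi}$. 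Combining the two displays gives
\begin{equation*}
\int_a^b \phi(x)\, dx \leq \frac{b-a}{a} = \frac{b}{a} - 1,
\end{equation*}
as required.

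There is no real obstacle; the only nonobvious step is the bound $\phi(a) \leq 1/a$, and even that is a one-line calculus exercise about the maximizer of $a e^{-a^2/2}$. The whole proof should fit in a few lines.
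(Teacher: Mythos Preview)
Your proposal is correct and matches the paper's own proof essentially line for line: bound the density by $1$ for the first inequality, and for the second use monotonicity of $\phi$ on $[a,b]$ together with the elementary bound $a e^{-a^2/2}\le 1$ (you phrase it as $\le \sqrt{2\pi}$, which is the same thing after absorbing the normalizing constant). There is nothing to add.
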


\begin{lemma}\label{aux:lem:4}
Let $A_t$, $B_t$, $C_t$ be three variables depending on the index $t$; also let $\Phi(z) = P(\N(0,1)\leq z)$ be the cumulative distribution function of standard Gaussian variable.~Suppose for the index $t$,
\begin{equation}\label{aequ:1}
\sup_{z\in\mR}\abr{P\rbr{A_t \leq z} - \Phi(z)} \leq a_t, \quad |B_t|\leq b_t,\quad |C_t|\leq c_t\quad \text{almost surely}
\end{equation}
where $a_t, b_t\geq 0$ and $0\leq c_t< 1$. Then, we have
\begin{equation*}
\sup_{z\in\mR}\abr{P\rbr{\frac{A_t+B_t}{\sqrt{1+C_t}}\leq z } - \Phi(z) } \leq a_t + b_t + \frac{c_t}{\sqrt{1-c_t}}.
\end{equation*}	
\end{lemma}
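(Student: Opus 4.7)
\textbf{Proof plan for Lemma \ref{aux:lem:4}.} The plan is to sandwich the event $\{(A_t+B_t)/\sqrt{1+C_t}\leq z\}$ between two deterministic half-lines of the form $\{A_t\leq z^{\pm}_t\}$, then control the Gaussian approximation error of these half-lines using the hypothesis \eqref{aequ:1} and Lemma \ref{aux:lem:3}. Since $1+C_t > 0$ almost surely, the event in question equals $\{A_t \leq z\sqrt{1+C_t} - B_t\}$. Using $|B_t|\leq b_t$ and $|C_t|\leq c_t$, the random quantity $z\sqrt{1+C_t}-B_t$ sits between two deterministic scalars: for $z\ge 0$ it lies in $[z\sqrt{1-c_t}-b_t,\;z\sqrt{1+c_t}+b_t]$, and for $z<0$ it lies in $[z\sqrt{1+c_t}-b_t,\;z\sqrt{1-c_t}+b_t]$. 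This yields, for $z\ge 0$,
\begin{equation*}
P(A_t\leq z\sqrt{1-c_t}-b_t)\;\leq\;P\!\left(\frac{A_t+B_t}{\sqrt{1+C_t}}\leq z\right)\;\leq\; P(A_t\leq z\sqrt{1+c_t}+b_t),
\end{equation*}
and the analogous sandwich with $z\sqrt{1\pm c_t}$ swapped when $z<0$.

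Applying the hypothesis \eqref{aequ:1} to each half-line shows that it suffices to bound $|\Phi(z\sqrt{1\pm c_t}\pm b_t)-\Phi(z)|$ by $b_t + c_t/\sqrt{1-c_t}$. I would handle this by splitting each difference into two increments, e.g.\
\begin{equation*}
\Phi(z\sqrt{1+c_t}+b_t)-\Phi(z)=P(z<\N(0,1)\leq z\sqrt{1+c_t})+P(z\sqrt{1+c_t}<\N(0,1)\leq z\sqrt{1+c_t}+b_t).
\end{equation*}
For the $b_t$-length increment I would use the width bound $P(a<\N(0,1)\leq b)\leq b-a$ from Lemma \ref{aux:lem:3}, giving at most $b_t$. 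For the multiplicative increment I would use the ratio bound from the same lemma: when $z>0$, $P(z<\N(0,1)\leq z\sqrt{1+c_t})\leq \sqrt{1+c_t}-1 \leq c_t/2 \leq c_t/\sqrt{1-c_t}$; when $z=0$ this term vanishes; similarly, the analogous increment from $z\sqrt{1-c_t}$ to $z$ is bounded by $1/\sqrt{1-c_t}-1 = c_t/(\sqrt{1-c_t}(1+\sqrt{1-c_t}))\leq c_t/\sqrt{1-c_t}$, which is where the final constant in the lemma comes from. The case $z<0$ follows immediately by the symmetry $\Phi(-x)=1-\Phi(x)$, which reduces it to the $z\ge 0$ analysis after a sign flip.

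Putting the three error sources together---the $a_t$ from the hypothesis, the $b_t$ width contribution, and the $c_t/\sqrt{1-c_t}$ ratio contribution---gives the stated bound. The main obstacle I expect is purely bookkeeping: keeping the signs straight in the sandwich depending on the sign of $z$, and ensuring the ratio bound from Lemma \ref{aux:lem:3} is only invoked when both endpoints are strictly positive (handling $z=0$ and the degenerate case $z\sqrt{1-c_t}-b_t\leq 0<z$ separately with the width bound). There is no analytical difficulty; the whole argument is a careful two-step comparison on each side of the origin.
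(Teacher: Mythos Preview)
Your proposal is correct and follows essentially the same approach as the paper: sandwich the event between $\{A_t\leq z\sqrt{1-c_t}-b_t\}$ and $\{A_t\leq z\sqrt{1+c_t}+b_t\}$ for $z>0$, apply the hypothesis \eqref{aequ:1}, then split the resulting $\Phi$-differences into an additive increment (bounded by $b_t$ via the width bound of Lemma~\ref{aux:lem:3}) and a multiplicative increment (bounded by $1/\sqrt{1-c_t}-1\le c_t/\sqrt{1-c_t}$ via the ratio bound). Your justification of the sandwich---bounding the random threshold $z\sqrt{1+C_t}-B_t$ directly---is in fact slightly cleaner than the paper's, which asserts the pointwise inequality $\frac{A_t-b_t}{\sqrt{1+c_t}}\le \frac{A_t+B_t}{\sqrt{1+C_t}}\le \frac{A_t+b_t}{\sqrt{1-c_t}}$ (this inequality can fail when $A_t$ is large negative, though the resulting probability inequalities for $z>0$ are still correct); your threshold formulation avoids that glitch entirely.
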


\section{Proofs of Preparation Lemmas}

\subsection{Proof of Lemma \ref{aux:lem:5}}

By the condition, we know $\varphi_{i-1}/\varphi_i = 1 - \varphi/i + o\rbr{1/i}$. Thus, we have
\begin{equation*}
i\rbr{1 - \varphi_{i-1}^p/\varphi_i^p} = i\rbr{1 - \cbr{1 - \varphi/i + o\rbr{1/i}}^p } = p\varphi + o(1).
\end{equation*}
This completes the proof.

\subsection{Proof of Lemma \ref{aux:lem:1}}

By Lemma \ref{aux:lem:5}, we know for any positive constant $p$, $\lim\limits_{i\rightarrow \infty} i \rbr{1-\varphi_{i-1}^p/\varphi_i^p} = p \varphi$. Choosing $p$ large enough such that $p\varphi <-1$,	the Raabe's test indicates that $\sum_{i=0}^{\infty}\varphi_i^p<\infty$. This~implies $\varphi_i \rightarrow 0$ and we complete the proof.

\subsection{Proof of Lemma \ref{aux:lem:2}}

For any scalar $A$, we have
\begin{align*}
& \frac{1}{\phi_t^p}\sum_{i=0}^{t}\prod_{j = i+1}^t\prod_{k=1}^{l}(1-\varphi_j\sigma_k)\varphi_i\phi_i - A \\
& = \frac{1}{\phi_t^p}\prod_{j = 0}^t\prod_{k=1}^{l}(1-\varphi_j\sigma_k)\cbr{\sum_{i=0}^{t}\prod_{j = 0}^i\prod_{k=1}^{l}(1-\varphi_j\sigma_k)^{-1}\varphi_i\phi_i - A\phi_t^p\prod_{j = 0}^t\prod_{k=1}^{l}(1-\varphi_j\sigma_k)^{-1} }.
\end{align*}
For the last term, we have
\begin{align*}
& A\phi_t^p\prod_{j = 0}^t\prod_{k=1}^{l}(1-\varphi_j\sigma_k)^{-1}\\
& = \sum_{i=1}^{t} \rbr{A\phi_i^p \prod_{j = 0}^i\prod_{k=1}^{l}(1-\varphi_j\sigma_k)^{-1} - A\phi_{i-1}^p\prod_{j = 0}^{i-1}\prod_{k=1}^{l}(1-\varphi_j\sigma_k)^{-1} } + A\phi_0^p\prod_{k=1}^{l}(1-\varphi_0\sigma_k)^{-1}\\
& = \sum_{i=1}^{t}A\phi_i^p \prod_{j = 0}^i\prod_{k=1}^{l}(1-\varphi_j\sigma_k)^{-1}\cbr{1 - \frac{\phi_{i-1}^p}{\phi_i^p}\prod_{k=1}^{l}(1-\varphi_i\sigma_k)}  + A\phi_0^p\prod_{k=1}^{l}(1-\varphi_0\sigma_k)^{-1}. 
\end{align*}
Combining the above two displays, we obtain
\begin{align}\label{pequ:A3}
& \frac{1}{\phi_t^p}\sum_{i=0}^{t}\prod_{j = i+1}^t\prod_{k=1}^{l}(1-\varphi_j\sigma_k)\varphi_i\phi_i - A \nonumber\\
& = \frac{1}{\phi_t^p}\prod_{j = 0}^t\prod_{k=1}^{l}(1-\varphi_j\sigma_k)\bigg\{\sum_{i=1}^{t}\prod_{j = 0}^i\prod_{k=1}^{l}(1-\varphi_j\sigma_k)^{-1}\phi_i^p\cbr{\varphi_i\phi_i^{1-p} - A\rbr{1 - \frac{\phi_{i-1}^p}{\phi_i^p}\prod_{k=1}^{l}(1-\varphi_i\sigma_k)}  } \nonumber \\
& \quad\quad +  \phi_0^p\prod_{k=1}^{l}(1-\varphi_0\sigma_k)^{-1}\rbr{\varphi_0\phi_0^{1-p} - A}\bigg\} .
\end{align}
We aim to select $A$ such that the middle term in \eqref{pequ:A3} is small. By \eqref{pequ:A1}, we know
\begin{equation*}
\frac{\phi_{i-1}^p}{\phi_i^p}
 = 1 - \frac{p\phi}{i} + o\rbr{\frac{1}{i}} = 1 - \frac{p\phi}{\tvarphi}\cdot\varphi_i  + o(\varphi_i),
\end{equation*}
where the second equality is due to $1/(i\varphi_i) = 1/\tvarphi + o(1)$ (which is true even if $\tvarphi =~\infty$).~Furthermore, we know
\begin{equation*}
\prod_{k=1}^{l}(1-\varphi_i\sigma_k) = 1 - \varphi_i\sum_{k=1}^{l}\sigma_k + o(\varphi_i).
\end{equation*}
With these two facts, we have
{\small \begin{align}\label{pequ:A4}
& \varphi_i\phi_i^{1-p} - A\big\{1 - \frac{\phi_{i-1}^p}{\phi_i^p}\prod_{k=1}^{l}(1-\varphi_i\sigma_k)\big\} = \varphi_i\phi_i^{1-p} - A\big\{ 1 - (1 - \frac{p\phi}{\tvarphi}\cdot\varphi_i + o(\varphi_i)) (1 - \varphi_i\sum_{k=1}^{l}\sigma_k + o(\varphi_i)) \big\} \nonumber\\
& = \varphi_i\phi_i^{1-p} - A\rbr{\frac{p\phi}{\tvarphi} + \sum_{k=1}^l\sigma_k} \varphi_i +o(\varphi_i).
\end{align}}
\hskip-3.5pt Thus, we let $A = 1/(\sum_{k=1}^l\sigma_k + \phi/\tvarphi)$ if $p =1$ and $A=0$ if $p\in (0, 1)$. Noting that $\phi_i^{1-p}\rightarrow 0$, \eqref{pequ:A3} leads to
\begin{align*}
\frac{1}{\phi_t^p}\sum_{i=0}^{t}\prod_{j = i+1}^t\prod_{k=1}^{l}&(1-\varphi_j\sigma_k)\varphi_i\phi_i  - A \\
& = \frac{1}{\phi_t^p}\prod_{j = 0}^t\prod_{k=1}^{l}(1-\varphi_j\sigma_k)\bigg\{\sum_{i=1}^{t}\prod_{j = 0}^i\prod_{k=1}^{l}(1-\varphi_j\sigma_k)^{-1}\phi_i^p\cdot o(\varphi_i) \\
&\quad\quad +  \phi_0^p\prod_{k=1}^{l}(1-\varphi_0\sigma_k)^{-1}\rbr{\varphi_0\phi_0^{1-p} - A}\bigg\}.
\end{align*}
Comparing the above display with \eqref{pequ:A2} and \eqref{pequ:A2:new}, we note that the first results in \eqref{pequ:A2}~and \eqref{pequ:A2:new} are implied by the second results. Thus, it suffices to prove the second results. We~define
\begin{equation}\label{pequ:A5}
\Psi_t = \frac{1}{\phi_t^p}\cbr{\sum_{i=0}^{t}\prod_{j = i+1}^t\prod_{k=1}^{l}(1-\varphi_j\sigma_k)\varphi_i\phi_i^p a_i + b\cdot \prod_{j = 0}^t\prod_{k=1}^{l}(1-\varphi_j\sigma_k) },
\end{equation}
then
\begin{align*}
\Psi_t & = \frac{1}{\phi_t^p}\cbr{ \varphi_t\phi_t^p a_t +\prod_{k=1}^{l}(1-\varphi_t\sigma_k)\rbr{\sum_{i=0}^{t-1}\prod_{j = i+1}^{t-1}\prod_{k=1}^{l}(1-\varphi_j\sigma_k)\varphi_i\phi_i^p a_i + b\cdot \prod_{j = 0}^{t-1}\prod_{k=1}^{l}(1-\varphi_j\sigma_k)} }\\
&\stackrel{\mathclap{\eqref{pequ:A5}}}{=} \;\; \frac{\phi_{t-1}^p}{\phi_t^p}\prod_{k=1}^{l}(1-\varphi_t\sigma_k)\Psi_{t-1} + \varphi_t a_t.
\end{align*}
By \eqref{pequ:A4}, we know that
\begin{equation*}
\frac{\phi_{t-1}^p}{\phi_t^p}\prod_{k=1}^{l}(1-\varphi_t\sigma_k) = 1 - \rbr{\frac{p\phi}{\tvarphi} + \sum_{k=1}^l\sigma_k}\cdot \varphi_t + o(\varphi_t).
\end{equation*}
Since $\sum_{k=1}^l\sigma_k + p\phi/\tvarphi>0$, we immediately conclude that for a constant $c>0$ and for~all~large enough $t$, $|\Psi_t| \leq (1 - c\varphi_t) |\Psi_{t-1}| + \varphi_t |a_t|$. Let $t_1$ be a fixed integer. We apply this inequality recursively and have for any $t\geq t_1+1$,
\begin{equation*}
|\Psi_t| \leq \prod_{i = t_1+1}^{t}(1 - c\varphi_i)|\Psi_{t_1}| + \sum_{i=t_1+1}^{t}\prod_{j = i+1}^t(1 - c\varphi_j)\varphi_i|a_i|.
\end{equation*}
For any $\epsilon>0$, since $a_i\rightarrow 0$, we select $t_1$ such that $|a_i|\leq \epsilon$, for all $i\geq t_1$. Then, the above inequality leads to
\begin{align*}
|\Psi_t| & \leq \prod_{i = t_1+1}^{t}(1 - c\varphi_i)|\Psi_{t_1}| + \epsilon \sum_{i=t_1+1}^{t}\prod_{j = i+1}^t(1 - c\varphi_j)\varphi_i \\
& = \prod_{i = t_1+1}^{t}(1 - c\varphi_i)|\Psi_{t_1}|  + \frac{\epsilon}{c} \big\{1 - \prod_{j=t_1+1}^{t}(1 - c\varphi_j)\big\}  \leq |\Psi_{t_1}|\exp\rbr{-c\sum_{i=t_1+1}^{t}\varphi_i} + \frac{\epsilon}{c}.
\end{align*}
Since $n\varphi_i \rightarrow \tvarphi\in(0, \infty]$, we know $\sum_{t}\varphi_t \rightarrow \infty$. Thus, for the above $\epsilon>0$, there exists $t_2\geq t_1$ such that $|\Psi_{t_1}|\exp\rbr{-c\sum_{i=t_1+1}^{t}\varphi_i}\leq \epsilon/c$, $\forall t\geq t_2$, which implies $|\Psi_t|\leq 2\epsilon/c$.~This~means $|\Psi_t|\rightarrow0$ and we complete the proof.

\subsection{Proof of Lemma \ref{aux:lem:3}}

The first part of statement holds naturally due to the fact that the density of the standard Gaussian satisfies $\exp(-t^2/2)/\sqrt{2\pi}\leq 1$ for any $t\in\mR$. Moreover, for $0<a\leq b$, we have
\begin{multline*}
P\rbr{a<\N(0,1)\leq b} = \int_{a}^{b} \frac{1}{\sqrt{2\pi}}\exp(-t^2/2)\; dt \leq \frac{b-a}{\sqrt{2\pi}}\exp(-a^2/2) \\
= \rbr{\frac{b}{a}-1}\frac{a}{\sqrt{2\pi}}\exp(-a^2/2) \leq \frac{b}{a}-1,
\end{multline*}
where the last inequality uses $a\exp(-a^2/2) \leq 1$ for all $a$. This completes the proof.

\subsection{Proof of Lemma \ref{aux:lem:4}}

We only prove the result for $z> 0$. The result of $z\leq0$ can be shown in the same~way.~We~know from \eqref{aequ:1} that $\frac{A_t-b_t}{\sqrt{1+c_t}} \leq \frac{A_t + B_t}{\sqrt{1+C_t}} \leq \frac{A_t+b_t}{\sqrt{1-c_t}}$, almost surely. Therefore, we have
{\small \begin{align*}
& P\rbr{\frac{A_t + B_t}{\sqrt{1+C_t}} \leq z}\geq P\rbr{\frac{A_t + b_t}{\sqrt{1-c_t}} \leq z} = P(A_t\leq z(1-c_t)^{1/2}-b_t) \stackrel{\eqref{aequ:1}}{\geq} \Phi(z(1-c_t)^{1/2}-b_t) - a_t \\
& \stackrel{\mathclap{(z\geq 0)}}{=} \;\; \Phi(z) - P\rbr{z(1-c_t)^{1/2}-b_t<\N(0,1)\leq z(1-c_t)^{1/2}} - P\rbr{z(1-c_t)^{1/2}<\N(0,1)\leq z} -a_t \\
& \geq \Phi(z) - b_t - \rbr{\frac{1}{\sqrt{1-c_t}} -1} - a_t\quad (\text{by Lemma \ref{aux:lem:3}})\\
& \geq \Phi(z) - b_t - \frac{c_t}{\sqrt{1-c_t}} -a_t.
\end{align*}}
\hskip-4pt On the other hand, we have
{\small \begin{align*}
& P\rbr{\frac{A_t + B_t}{\sqrt{1+C_t}} \leq z}\leq P\rbr{\frac{A_t - b_t}{\sqrt{1+c_t}} \leq z} = P(A_t\leq z(1+c_t)^{1/2}+b_t)\stackrel{\eqref{aequ:1}}{\leq}\Phi(z(1+c_t)^{1/2}+b_t) + a_t\\
& = \Phi(z) + P\rbr{z<\N(0,1)\leq z(1+c_t)^{1/2}} + P\rbr{z(1+c_t)^{1/2}< \N(0,1)\leq z(1+c_t)^{1/2}+b_t} + a_t\\
& \leq \Phi(z) + \cbr{(1+c_t)^{1/2}-1} + b_t+a_t \quad (\text{by Lemma \ref{aux:lem:3}})\\
& \leq \Phi(z) + c_t + b_t + a_t.
\end{align*}}
\hskip-4pt Combining the above two displays completes the proof.

\section{Proofs of Section \ref{sec:3}}

\subsection{Proof of Lemma \ref{lem:1}}\label{pf:lem:1}

We note that $\gamma_{S} \leq \|\mE[K_tS(S^TK_t^2S)^\dagger S^TK_t \mid \bx_t, \blambda_t]\|\leq \mE[\|K_tS(S^TK_t^2S)^\dagger S^TK_t\| \mid \bx_t, \blambda_t] \leq~1$, where the second inequality is by Jensen's inequality; the third inequality is by the fact that $K_tS(S^TK_t^2S)^\dagger S^TK_t$ is a projection matrix. This shows (a). Let us define for $j=1,\ldots,\tau$, $C_{t,j} = I - K_tS_{t,j}(S_{t,j}^TK_t^2S_{t,j})^\dagger S_{t,j}^TK_t$. Then, we obtain from \eqref{equ:pseduo} that
\begin{equation}\label{equ:z:recur}
\bz_{t,\tau} - \tbz_t = C_{t,\tau-1}(\bz_{t,\tau-1}-\tbz_t) =  \rbr{\prod_{j = 0}^{\tau-1}C_{t,j}}(\bz_{t,0}-\tbz_t) = -\rbr{\prod_{j = 0}^{\tau-1}C_{t,j}}\tbz_t.
\end{equation}
Thus, (b) follows from \eqref{equ:z:recur} and the independence among $\{S_{t,j}\}_j$. Moreover, (c) is proved~by \cite[Theorem 4.6]{Gower2015Randomized}.

\subsection{Proof of Lemma \ref{lem:2}}\label{pf:lem:2}

By Assumption \ref{ass:1}, there exists a constant $\Upsilon_u\geq 1$ such that
\begin{equation}\label{equ:upper:bound}
\|\nabla^2\mL(\bx, \blambda)\| \vee \|\nabla\mL(\bx, \blambda)\| \leq \Upsilon_u, \quad \forall (\bx, \blambda)\in \mX\times\Lambda.
\end{equation}
By~direct~calculation, we have (the evaluation point is suppressed for simplicity)
\begin{equation}\label{equ:AL:der}
\begin{pmatrix}
\nabla_{\bx}\mL_{\mu, \nu}\\
\nabla_{\blambda}\mL_{\mu, \nu}
\end{pmatrix} = \begin{pmatrix}
I + \nu \nabla_{\bx}^2\mL & \mu G^T\\
\nu G & I
\end{pmatrix}\begin{pmatrix}
\nabla_{\bx}\mL\\
c
\end{pmatrix}.
\end{equation}
Using \eqref{equ:AL:der} and the definition of $(\Delta\bx_t, \Delta\blambda_t)$, we have
\begin{align*}
\begin{pmatrix}
\nabla_{\bx}\mL_{\mu, \nu}^t\\
\nabla_{\blambda}\mL_{\mu, \nu}^t
\end{pmatrix}^T\begin{pmatrix}
\Delta\bx_t\\
\Delta\blambda_t
\end{pmatrix} \; & \stackrel{\mathclap{\eqref{equ:AL:der}}}{=} \; \begin{pmatrix}
\Delta\bx_t\\
\Delta\blambda_t
\end{pmatrix}^T\begin{pmatrix}
I + \nu \nabla_{\bx}^2\mL_t & \mu G_t^T\\
\nu G_t & I
\end{pmatrix}\begin{pmatrix}
\nabla_{\bx}\mL_t\\
c_t
\end{pmatrix} \nonumber\\
& \stackrel{\mathclap{\eqref{equ:Newton}}}{=} - \begin{pmatrix}
\Delta\bx_t\\
\Delta\blambda_t
\end{pmatrix}^T\begin{pmatrix}
I + \nu \nabla_{\bx}^2\mL_t & \mu G_t^T\\
\nu G_t & I
\end{pmatrix}\begin{pmatrix}
B_t & G_t^T\\
G_t & \0
\end{pmatrix}\begin{pmatrix}
\Delta\bx_t\\
\Delta\blambda_t
\end{pmatrix} \nonumber\\
& = - \begin{pmatrix}
\Delta\bx_t\\
\Delta\blambda_t
\end{pmatrix}^T\begin{pmatrix}
B_t + \nu \nabla_{\bx}^2\mL_tB_t + \mu G_t^TG_t & G_t^T + \nu\nabla_{\bx}^2\mL_tG_t^T\\
G_t + \nu G_tB_t & \nu G_tG_t^T 
\end{pmatrix}\begin{pmatrix}
\Delta\bx_t\\
\Delta\blambda_t
\end{pmatrix}.
\end{align*}
Furthermore, using \eqref{equ:upper:bound} and Assumption \ref{ass:1}, we obtain\allowdisplaybreaks
{\small\begin{align}\label{pequ:1}
&\hskip-0.2cm \begin{pmatrix}
\nabla_{\bx}\mL_{\mu, \nu}^t\\
\nabla_{\blambda}\mL_{\mu, \nu}^t
\end{pmatrix}^T\begin{pmatrix}
\Delta\bx_t\\
\Delta\blambda_t
\end{pmatrix} \nonumber \\
& \leq  -\Delta\bx_t^TB_t\Delta\bx_t + \nu\Upsilon_B\Upsilon_u\|\Delta\bx_t\|^2 - \mu\|G_t\Delta\bx_t\|^2 - 2\Delta\blambda_t^TG_t\Delta\bx_t  + \nu(\Upsilon_u + \Upsilon_B)\|\Delta\bx_t\|\|G_t^T\Delta\blambda_t\| - \nu\|G_t^T\Delta\blambda_t\|^2 \nonumber\\
& \stackrel{\mathclap{\eqref{equ:Newton}}}{\leq} - \Delta\bx_t^TB_t\Delta\bx_t + \nu\Upsilon_B\Upsilon_u\|\Delta\bx_t\|^2 - \mu\|c_t\|^2 + 2c_t^T\Delta\blambda_t +\frac{\nu(\Upsilon_u+\Upsilon_B)^2}{2}\|\Delta\bx_t\|^2 - \frac{\nu}{2}\|G_t^T\Delta\blambda_t\|^2  \nonumber\\
& \leq - \Delta\bx_t^TB_t\Delta\bx_t + \nu(\Upsilon_B+\Upsilon_u)^2\|\Delta\bx_t\|^2 - \mu\|c_t\|^2 + \frac{8}{\nu\gamma_{G}}\|c_t\|^2 + \frac{\nu\gamma_{G}}{8}\|\Delta\blambda_t\|^2 \nonumber\\
& \quad - \frac{\nu\gamma_{G}}{4}\|\Delta\blambda_t\|^2 - \frac{\nu}{4}\|G_t^T\Delta\blambda_t\|^2\quad \text{(Young's inequality and Assumption \ref{ass:1})} \nonumber\\
& \stackrel{\mathclap{\eqref{equ:Newton}}}{=}  - \Delta\bx_t^TB_t\Delta\bx_t + \nu(\Upsilon_B+\Upsilon_u)^2\|\Delta\bx_t\|^2 - \rbr{\mu - \frac{8}{\nu\gamma_{G}}}\|c_t\|^2  - \frac{\nu\gamma_{G}}{8}\|\Delta\blambda_t\|^2 - \frac{\nu}{4}\|B_t\Delta\bx_t + \nabla_{\bx}\mL_t\|^2 \nonumber\\
& \leq - \Delta\bx_t^TB_t\Delta\bx_t + \nu(\Upsilon_B+\Upsilon_u)^2\|\Delta\bx_t\|^2 - \rbr{\mu - \frac{8}{\nu\gamma_{G}}}\|c_t\|^2 - \frac{\nu\gamma_{G}}{8}\|\Delta\blambda_t\|^2 - \frac{\nu}{8}\|\nabla_{\bx}\mL_t\|^2 + \frac{\nu\Upsilon_B^2}{4}\|\Delta\bx_t\|^2 \nonumber\\
& \leq  - \Delta\bx_t^TB_t\Delta\bx_t +2\nu(\Upsilon_B+\Upsilon_u)^2\|\Delta\bx_t\|^2 - \rbr{\mu - \frac{8}{\nu\gamma_{G}}}\|c_t\|^2 - \frac{\nu\gamma_{G}}{8}\|\Delta\blambda_t\|^2 - \frac{\nu}{8}\|\nabla_{\bx}\mL_t\|^2,
\end{align}}
\noindent where the second last inequality uses $\|B_t\Delta\bx_t + \nabla_{\bx}\mL_t\|^2 \geq \|\nabla_{\bx}\mL_t\|^2/2 - \|B_t\Delta\bx_t\|^2\geq~\|\nabla_{\bx}\mL_t\|^2/2 \\- \Upsilon_B^2\|\Delta\bx_t\|^2$. To further simplify \eqref{pequ:1}, we decompose the step $\Delta\bx_t$ as
\begin{equation*}
\Delta\bx_t = \Delta\bu_t + \Delta\bv_t,\quad \text{ where } \Delta\bu_t \in \text{span}(G_t^T) \text{ and } G_t\Delta\bv_t = \0.
\end{equation*}
Then, the first two terms of \eqref{pequ:1} can be simplified as \begin{align*}
- & \Delta\bx_t^TB_t\Delta\bx_t  +2\nu(\Upsilon_B+\Upsilon_u)^2\|\Delta\bx_t\|^2 \\
& = -\Delta\bu_t^TB_t\Delta\bu_t - 2\Delta\bu_t^TB_t\Delta\bv_t - \Delta\bv_t^TB_t\Delta\bv_t +2\nu(\Upsilon_B+\Upsilon_u)^2\|\Delta\bx_t\|^2 \\
& \leq \Upsilon_B\|\Delta\bu_t\|^2 + 2\Upsilon_B\|\Delta\bu_t\|\|\Delta\bv_t\| - \gamma_{RH}\|\Delta\bv_t\|^2 +2\nu(\Upsilon_B+\Upsilon_u)^2\|\Delta\bx_t\|^2 \quad \text{(Assumption \ref{ass:1})}\\
& \leq \rbr{\Upsilon_B + \frac{2\Upsilon_B^2}{\gamma_{RH}}}\|\Delta\bu_t\|^2 - \frac{\gamma_{RH}}{2}\|\Delta\bv_t\|^2 +2\nu(\Upsilon_B+\Upsilon_u)^2\|\Delta\bx_t\|^2 \quad \text{(Young's inequality)}\\
& = \rbr{\Upsilon_B+\frac{2\Upsilon_B^2}{\gamma_{RH}}+ \frac{\gamma_{RH}}{2} }\|\Delta\bu_t\|^2 - \rbr{\frac{\gamma_{RH}}{2} - 2\nu(\Upsilon_B+\Upsilon_u)^2}\|\Delta\bx_t\|^2\\
& \leq \rbr{\Upsilon_B+\frac{2\Upsilon_B^2}{\gamma_{RH}}+ \frac{\gamma_{RH}}{2} } \frac{1}{\gamma_{G}}\|c_t\|^2 - \rbr{\frac{\gamma_{RH}}{2} - 2\nu(\Upsilon_B+\Upsilon_u)^2}\|\Delta\bx_t\|^2,
\end{align*}
where the last inequality uses the fact that $\|c_t\|^2 =\|G_t\Delta\bx_t\|^2 = \|G_t\Delta\bu_t\|^2 \geq  \gamma_{G}\|\Delta\bu_t\|^2$.~Here, the inequality is due to Assumption \ref{ass:1}. Combining the above display with \eqref{pequ:1}, we have
\begin{align*}
\begin{pmatrix}
\nabla_{\bx}\mL_{\mu, \nu}^t\\
\nabla_{\blambda}\mL_{\mu, \nu}^t
\end{pmatrix}^T\begin{pmatrix}
\Delta\bx_t\\
\Delta\blambda_t
\end{pmatrix} & \leq - \frac{\nu\gamma_{G}}{8}\nbr{\begin{pmatrix}
\Delta\bx_t\\
\Delta\blambda_t
\end{pmatrix}}^2 - \frac{\nu}{8}\nbr{\begin{pmatrix}
\nabla_{\bx}\mL_t\\
c_t
\end{pmatrix}}^2 - \rbr{\frac{\gamma_{RH}}{2}-2\nu(\Upsilon_B+\Upsilon_u)^2 - \frac{\nu\gamma_{G}}{8}}\|\Delta\bx_t\|^2\\
& \quad -\cbr{\mu - \frac{8}{\nu\gamma_{G}} - \rbr{\Upsilon_B+\frac{2\Upsilon_B^2}{\gamma_{RH}}+ \frac{\gamma_{RH}}{2} } \frac{1}{\gamma_{G}} - \frac{\nu}{8}}\|c_t\|^2.
\end{align*}
Thus, choosing $\Upsilon_1$ large enough (depending only on $\gamma_{G}, \gamma_{RH}, \Upsilon_B$), we complete the proof.

\subsection{Proof of Lemma \ref{lem:3}}\label{pf:lem:3}

Let us denote $\bz_t = (\Delta\bx_t, \Delta\blambda_t)$ and recall that $\bz_{t,\tau} = (\barDelta\bx_t, \barDelta\blambda_t)$ and $\tbz_t = (\tDelta\bx_t, \tDelta\blambda_t)$. By Assumption \ref{ass:1} and the expression \eqref{equ:AL:der}, it is straightforward to see that $\nabla\mL_{\mu, \nu}$ is Lipschitz continuous with a constant $\Upsilon_{\mA\mL}>0$ depending on $(\mu, \nu, \Upsilon_L)$. Thus, using \eqref{equ:update} we have
\begin{align}\label{pequ:2}
& \hskip-0.7cm \mL_{\mu, \nu}^{t+1} \leq \mL_{\mu, \nu}^t + \baralpha_t(\nabla\mL_{\mu, \nu}^t)^T\bz_{t,\tau} + \frac{\Upsilon_{\mA\mL}\baralpha_t^2}{2}\nbr{\bz_{t,\tau}}^2\nonumber \\
& \hskip-0.7cm = \mL_{\mu, \nu}^t + \baralpha_t(\nabla\mL_{\mu, \nu}^t)^T(I + C_t)\bz_t + \baralpha_t(\nabla\mL_{\mu, \nu}^t)^T\cbr{\bz_{t,\tau}-(I + C_t)\bz_t }  + \frac{\Upsilon_{\mA\mL}\baralpha_t^2}{2}\nbr{\bz_{t,\tau}}^2,
\end{align}
where $C_t$ is from Lemma \ref{lem:1}(b). By Lemmas \ref{lem:2} and \ref{lem:1}, the second term can be bounded~as 
\begin{align*}
(\nabla\mL_{\mu, \nu}^t)^T(I + C_t)\bz_t & \leq -\frac{\nu}{\Upsilon_1}\rbr{\nbr{\bz_t}^2 + \nbr{\nabla\mL_t}^2} + \|C_t\|\nbr{\nabla\mL_{\mu, \nu}^t}\nbr{\bz_t} \\
& \stackrel{\mathclap{\eqref{equ:AL:der},\eqref{equ:upper:bound}}}{\leq} \;\;\; -\frac{\nu}{\Upsilon_1}\rbr{\nbr{\bz_t}^2 + \nbr{\nabla\mL_t}^2}  + \rho^\tau \rbr{1+(2\nu +\mu)\Upsilon_u}\nbr{\nabla\mL_t}\nbr{\bz_t}\\
& \leq -\frac{\nu}{\Upsilon_1}\rbr{\nbr{\bz_t}^2 + \nbr{\nabla\mL_t}^2}  + 2\rho^\tau\mu\Upsilon_u\nbr{\nabla\mL_t}\nbr{\bz_t}\\
& \leq - \rbr{\frac{\nu}{\Upsilon_1} - \rho^\tau\mu\Upsilon_u}\rbr{\nbr{\bz_t}^2 + \nbr{\nabla\mL_t}^2},
\end{align*}
where the third inequality uses the facts that $\Upsilon_u\geq 1$ and $1+2\nu\leq \mu$ (as long as $\Upsilon_1\geq 2$). Thus, we can re-define $\Upsilon_1$ as $\Upsilon_1 \leftarrow 2\Upsilon_1\Upsilon_u$. If $\rho^\tau \leq \nu/(\mu\Upsilon_1)$, then we have
\begin{equation}\label{pequ:3}
(\nabla\mL_{\mu, \nu}^t)^T(I + C_t)\bz_t \leq -\frac{\nu}{2\Upsilon_1}\rbr{\nbr{\bz_t}^2 + \nbr{\nabla\mL_t}^2}.
\end{equation}
Now, we deal with the last two terms of \eqref{pequ:2}. By Lemma \ref{lem:1}(b), we have
\begin{align}\label{pequ:4}
\mE\sbr{\bz_{t,\tau} \mid \mF_{t-1}} & = \mE\sbr{\mE\sbr{\bz_{t,\tau} \mid \mF_{t-2/3}}\mid \mF_{t-1}} = \mE\sbr{(I+C_t)\tbz_t\mid \mF_{t-1}} \nonumber\\
& \stackrel{\mathclap{\eqref{equ:Newton}}}{=} -(I+C_t)K_t^{-1}\mE\sbr{\bnabla\mL_t\mid \mF_{t-1}} = -(I+C_t)K_t^{-1}\nabla\mL_t\quad \text{(Assumption \ref{ass:2})} \nonumber\\
& \stackrel{\mathclap{\eqref{equ:Newton}}}{=} (I+C_t)\bz_t.
\end{align}
By Lemma \ref{lem:1}(b, c), we also have
\begin{align*}
& \mE\sbr{\nbr{\bz_{t,\tau} - (I+C_t)\bz_t}^2 \mid \mF_{t-1}} \\
& \leq 3\mE\sbr{\nbr{\bz_{t,\tau} - \tbz_t}^2\mid \mF_{t-1}} + 3\mE\sbr{\nbr{\tbz_t - \bz_t}^2\mid \mF_{t-1}} + 3\|C_t\|^2\nbr{\bz_t}^2 \nonumber\\
& \leq 3\rho^\tau\mE\sbr{\nbr{\tbz_t}^2\mid \mF_{t-1}} + 3\mE\sbr{\nbr{\tbz_t - \bz_t}^2\mid \mF_{t-1}} + 3\rho^{2\tau}\nbr{\bz_t}^2 \nonumber\\
& = 3(\rho^\tau + \rho^{2\tau})\nbr{\bz_t}^2 + 3(1+\rho^\tau)\mE\sbr{\nbr{\tbz_t - \bz_t}^2\mid \mF_{t-1}}\quad (\text{bias-variance decomposition}).
\end{align*}
By Assumption \ref{ass:1} and \cite[Lemma 1]{Na2022adaptive}, there exists a constant $\Upsilon_K\geq~1$~depending on $(\gamma_{G}, \gamma_{RH}, \Upsilon_B)$ such that $\|K_t^{-1}\|\leq \Upsilon_K$. Thus, we apply \eqref{equ:Newton} and \eqref{equ:upper:bound}, and~obtain
\begin{multline}\label{pequ:5}
\hskip-0.3cm \mE\sbr{\nbr{\bz_{t,\tau} - (I+C_t)\bz_t}^2 \mid \mF_{t-1}} \leq 3(\rho^\tau + \rho^{2\tau})\Upsilon_K^2\Upsilon_u^2 +  3(1+\rho^\tau)\Upsilon_K^2\mE[\|\barg_t - \nabla f_t\|^2\mid \mF_{t-1}]  \\
\leq 3(1+\rho^\tau)\Upsilon_K^2(\rho^\tau\Upsilon_u^2+\Upsilon_m) \quad \text{(Assumption \ref{ass:2}\eqref{equ:BM:a})}.
\end{multline}
Thus, using \eqref{pequ:4} and \eqref{pequ:5}, we have \begin{align}\label{pequ:6}
&\hskip-1.2cm \mE\sbr{\baralpha_t(\nabla\mL_{\mu, \nu}^t)^T\cbr{\bz_{t,\tau}-(I + C_t)\bz_t } \mid \mF_{t-1}} \nonumber\\
& \stackrel{\mathclap{\eqref{pequ:4}}}{=} \;\; \mE\sbr{\cbr{\baralpha_t - (\beta_t+\eta_t)/2}\cdot (\nabla\mL_{\mu, \nu}^t)^T\cbr{\bz_{t,\tau}-(I + C_t)\bz_t } \mid \mF_{t-1}} \nonumber\\
& \stackrel{\mathclap{\eqref{equ:sandwich}}}{\leq}\; \frac{\eta_t-\beta_t}{2}\mE\sbr{\nbr{\nabla\mL_{\mu, \nu}^t}\nbr{\bz_{t,\tau}-(I + C_t)\bz_t}\mid \mF_{t-1} } \nonumber\\
& \stackrel{\mathclap{\eqref{equ:upper:bound}}}{\leq}\; \;\frac{\eta_t-\beta_t}{2}(1+(2\nu+\mu)\Upsilon_u)\Upsilon_u\mE\sbr{\nbr{\bz_{t,\tau}-(I + C_t)\bz_t}\mid \mF_{t-1}} \nonumber\\
& \leq (\eta_t-\beta_t)\mu\Upsilon_u^2\sqrt{\mE\sbr{\nbr{\bz_{t,\tau}-(I + C_t)\bz_t}^2\mid \mF_{t-1}}}\quad (1\leq \Upsilon_u \text{ and } 1+2\nu\leq \mu) \nonumber\\
&\stackrel{\mathclap{\eqref{pequ:5}}}{\leq}\;\; 2\mu\Upsilon_K\Upsilon_u^2(1+\rho^\tau)(\sqrt{\Upsilon_m}\vee \Upsilon_u)
(\eta_t-\beta_t) \leq 4\mu\Upsilon_K\Upsilon_u^2(\sqrt{\Upsilon_m}\vee\Upsilon_u)(\eta_t-\beta_t),
\end{align}
and
\begin{align}\label{pequ:7}
\mE[\|\bz_{t,\tau}\|^2&\mid \mF_{t-1}]  \; \;  \stackrel{\mathclap{\eqref{pequ:4}}}{=}\;\;\; \nbr{(I+C_t)\bz_t}^2 + \mE\sbr{\nbr{\bz_{t,\tau} - (I+C_t)\bz_t }^2 \mid \mF_{t-1}} \nonumber\\
& \stackrel{\mathclap{\eqref{equ:Newton},\eqref{equ:upper:bound}}}{\leq}\;\;(1+\rho^\tau)^2\Upsilon_K^2\Upsilon_u^2 + \mE\sbr{\nbr{\bz_{t,\tau} - (I+C_t)\bz_t }^2 \mid \mF_{t-1}}\quad (\text{also use Lemma \ref{lem:1}(b)}) \nonumber\\
&\stackrel{\mathclap{\eqref{pequ:5}}}{\leq} (1+\rho^\tau)^2\Upsilon_K^2\Upsilon_u^2 + 3(1+\rho^\tau)\Upsilon_K^2(\rho^\tau\Upsilon_u^2+\Upsilon_m)
\leq 16\Upsilon_K^2(\Upsilon_u^2\vee\Upsilon_m).
\end{align}
Combining \eqref{pequ:7} with \eqref{pequ:6} and \eqref{pequ:3}, plugging into \eqref{pequ:2}, and using \eqref{equ:sandwich}, we obtain
\begin{multline*}
\mE[\mL_{\mu, \nu}^{t+1}\mid \mF_{t-1}]  \leq \mL_{\mu, \nu}^t - \frac{\nu\beta_t}{2\Upsilon_1}\rbr{\nbr{\bz_t}^2 + \nbr{\nabla\mL_t}^2} \\
+ 4\mu\Upsilon_K\Upsilon_u^2(\sqrt{\Upsilon_m}\vee\Upsilon_u)(\eta_t-\beta_t) + 8\Upsilon_{\mA\mL}\Upsilon_K^2(\Upsilon_u^2\vee\Upsilon_m)\eta_t^2.
\end{multline*}
Choosing $\Upsilon_2$ large enough that depends on $(\mu, \nu, \gamma_{G}, \gamma_{RH}, \Upsilon_B, \Upsilon_m, \Upsilon_L)$, and noting that~$(\mu, \nu)$ are determined by $(\gamma_{G}, \gamma_{RH}, \Upsilon_B)$, we complete the proof.

\subsection{Proof of Theorem \ref{thm:1}}\label{pf:thm:1}

Note that the condition of $\tau$ in the statement implies that we can select $(\mu, \nu)$ to satisfy the condition in Lemma \ref{lem:2} and have $\rho^\tau\leq \nu/(\mu\Upsilon_1)$ with $\rho = 1-\gamma_{S}$. Thus, Lemma \ref{lem:3} leads to \begin{equation*}
\mE[\mL_{\mu, \nu}^{t+1} - \min_{\mX\times\Lambda}\mL_{\mu, \nu}\mid \mF_{t-1}] \leq \mL_{\mu, \nu}^t - \min_{\mX\times\Lambda}\mL_{\mu, \nu} - \frac{\nu\beta_t}{2\Upsilon_1}\|\nabla\mL_t\|^2 + \Upsilon_2(\chi_t+\eta_t^2).
\end{equation*}
By Robbins-Siegmund theorem (see \cite{Robbins1971convergence} or \cite[Theorem~1.3.12]{Duflo1997Random}), we conclude that $\sum_t \beta_t\|\nabla\mL_t\|^2<\infty$. Since $\sum_t \beta_t = \infty$ from \eqref{cond:step}, we~know~that $\liminf_{t\rightarrow\infty}\|\nabla \mL_t\| = 0$. Furthermore, we note that
\begin{multline}\label{pequ:33}
\mE\sbr{\nbr{(\bx_{t+1} - \bx_t,\blambda_{t+1}-\blambda_t)}^2} = \mE\sbr{\mE\sbr{\nbr{(\bx_{t+1} - \bx_t, \blambda_{t+1}-\blambda_t)}^2\mid \mF_{t-1}}} \\ \stackrel{\eqref{equ:update}}{\leq}\eta_t^2\mE\sbr{\mE\sbr{\nbr{\bz_{t,\tau}}^2 \mid \mF_{t-1}} }\stackrel{\eqref{pequ:7}}{\leq} 16\Upsilon_K^2(\Upsilon_u^2\vee\Upsilon_m)\cdot \eta_t^2. 
\end{multline}
Summing over $t=1$ to $\infty$, exchanging the expectation and summation by applying Fubini's theorem \citep[Theorem 1.7.2]{Durrett2019Probability}, and noting that $\sum_t\eta_t^2<\infty$, we obtain
\begin{equation*}
\mE\sbr{\sum_{t=1}^{\infty}\nbr{(\bx_{t+1} - \bx_t,				\blambda_{t+1}-\blambda_t)}^2}<\infty.
\end{equation*}
This implies $\sum_{t=1}^{\infty}\|(\bx_{t+1} - \bx_t, \blambda_{t+1}-\blambda_t)\|<\infty$ almost surely and, thus, $\|(\bx_{t+1}-\bx_t, \blambda_{t+1}-\blambda_t)\|\rightarrow0$ as $t\rightarrow\infty$ almost surely. Suppose for any run of the algorithm $\lim_{t\rightarrow\infty}\|\nabla\mL_t\|\neq 0$, then we have $\limsup_{t\rightarrow\infty}\|\nabla\mL_t\| = \epsilon >0$. Then, there exist two index sequences $\{t_{1,i}\}_i, \{t_{2,i}\}_i$ with $t_{1,i+1}>t_{2,i}> t_{1,i}$ such that, for all $i = 1,2,\ldots$, \begin{equation}\label{pequ:8}
\|\nabla\mL_{t_{1,i}}\| \geq \epsilon/2,\quad\quad \|\nabla\mL_j\| \geq \epsilon/3\; \text{ for } j = t_{1,i}+1,\ldots, t_{2,i}-1, \quad\quad \|\nabla\mL_{t_{2,i}}\| < \epsilon/3.
\end{equation}
Since $\sum_t\beta_t\|\nabla\mL_t\|^2<\infty$, we know
\begin{equation}\label{equ:20}
\infty>\sum_{i=1}^{\infty}\sum_{j=t_{1,i}}^{t_{2,i}-1}\beta_j\|\nabla\mL_j\|^2 \stackrel{\eqref{pequ:8}}{\geq} \frac{\epsilon^2}{9}\sum_{i=1}^{\infty}\sum_{j=t_{1,i}}^{t_{2,i}-1}\beta_j. 
\end{equation}
Furthermore, by \eqref{pequ:33}, we have
\begin{multline*}
\mE\sbr{\nbr{(\bx_{t_{2,i}} - \bx_{t_{1,i}},				\blambda_{t_{2,i}} - \blambda_{t_{1,i}})}} \stackrel{\eqref{pequ:33}}{\leq} 4\Upsilon_K(\Upsilon_u\vee\sqrt{\Upsilon_m})\sum_{j=t_{1,i}}^{t_{2,i}-1}\eta_j \\ \stackrel{\eqref{equ:sandwich}}{=} 4\Upsilon_K(\Upsilon_u\vee\sqrt{\Upsilon_m})\cbr{\sum_{j=t_{1,i}}^{t_{2,i}-1}\beta_j + \sum_{j=t_{1,i}}^{t_{2,i}-1}\chi_j}.
\end{multline*}
Summing over $i=1$ to $\infty$, and noting that $\sum_i\sum_{j=t_{1,i}}^{t_{2,i}-1}\beta_j<\infty$ by \eqref{equ:20} and $\sum_i\sum_{j=t_{1,i}}^{t_{2,i}-1}\chi_j \leq \sum_{j=1}^{\infty}\chi_j<\infty$, we exchange the expectation and summation by applying Fubini's theorem again. We know that the sequence $\{(\bx_{t_{2,i}} - \bx_{t_{1,i}}, \blambda_{t_{2,i}} - \blambda_{t_{1,i}})\}_i$ converges to zero~as~$i\rightarrow~\infty$~with probability one. This contradicts with $\|\nabla\mL_{t_{1,i}}\| \geq \epsilon/2$ and $\|\nabla\mL_{t_{2,i}}\| < \epsilon/3$ in \eqref{pequ:8}.~We~complete the proof.

\subsection{Proof of Corollary \ref{cor:1}}\label{pf:cor:1}

Applying Lemma \ref{lem:3} and taking full expectation, we know for some constants $h_1, h_2>0$,
\begin{equation*}
\mE[\mL_{\mu, \nu}^{t+1}] \leq \mE[\mL_{\mu, \nu}^t] - h_1\beta_t\mE[\|\nabla\mL_t\|^2] + h_2(\chi_t + \eta_t^2), \quad \forall t\geq 0.
\end{equation*}
Rearranging the inequality and summing over $t = 0$ to $\T_{\epsilon}-1$, we obtain
\begin{align*}
& h_1\sum_{t=0}^{\T_{\epsilon}-1}\mE[\|\nabla\mL_t\|^2]  \leq \sum_{t=0}^{\T_{\epsilon}-1}\frac{1}{\beta_t}\rbr{(\mE[\mL_{\mu, \nu}^t]-\min_{\mX\times\Lambda}\mL_{\mu, \nu}) - (\mE[\mL_{\mu, \nu}^{t+1}] - \min_{\mX\times\Lambda}\mL_{\mu, \nu}) } + h_2\sum_{t=0}^{\T_{\epsilon}-1}\frac{\chi_t + \eta_t^2}{\beta_t} \\
& \leq \frac{\mE[\mL_{\mu, \nu}^0]-\min_{\mX\times\Lambda}\mL_{\mu, \nu}}{\beta_0} + \sum_{t = 1}^{\T_{\epsilon}-1}\rbr{\frac{1}{\beta_t} - \frac{1}{\beta_{t-1}}}(\mE[\mL_{\mu, \nu}^t]-\min_{\mX\times\Lambda}\mL_{\mu, \nu}) + h_2\sum_{t=0}^{\T_{\epsilon}-1}\frac{\chi_t + \eta_t^2}{\beta_t}.
\end{align*}
Denoting $\Delta\mL_{\mu, \nu} = \max_{\mX\times\Lambda}\mL_{\mu, \nu} - \min_{\mX\times\Lambda}\mL_{\mu, \nu}$, we further have
\begin{align*}
& \hskip-1cm h_1\sum_{t=0}^{\T_{\epsilon}-1}\mE[\|\nabla\mL_t\|^2] \leq \rbr{\Delta\mL_{\mu, \nu}\vee h_2}\cbr{\frac{1}{\beta_0} + \sum_{t=1}^{\T_{\epsilon}-1}\rbr{\frac{1}{\beta_t} - \frac{1}{\beta_{t-1}}} + \sum_{t=0}^{\T_{\epsilon}-1}\frac{\chi_t + \eta_t^2}{\beta_t}}\\
& =  \rbr{\Delta\mL_{\mu, \nu}\vee h_2}\cbr{\frac{1}{\beta_{\T_{\epsilon}-1}} + \sum_{t=0}^{\T_{\epsilon}-1}\frac{\chi_t + \eta_t^2}{\beta_t} } = \rbr{\Delta\mL_{\mu, \nu}\vee h_2}\cbr{\T_{\epsilon}^a + \sum_{t=0}^{\T_{\epsilon}-1}\frac{\chi_t + \eta_t^2}{\beta_t}}.
\end{align*}
For the last term on the right hand side, we have
\begin{align*}
\sum_{t=0}^{\T_{\epsilon}-1}\frac{\chi_t + \eta_t^2}{\beta_t} & = \sum_{t=0}^{\T_{\epsilon}-1}\cbr{(t+1)^{a-b} + (t+1)^a\rbr{(t+1)^{-2a} + 2(t+1)^{-(a+b)} + (t+1)^{-2b}  }}\\
& \leq \sum_{t=0}^{\T_{\epsilon}-1}(t+1)^{a-b} + 4\sum_{t=0}^{\T_{\epsilon}-1}(t+1)^{-a} = 5 + \sum_{t=1}^{\T_{\epsilon}-1}\cbr{(t+1)^{a-b} + 4(t+1)^{-a}}\\
&\leq 5 + \int_0^{\T_{\epsilon}-1}(t+1)^{a-b} + 4(t+1)^{-a}dt \quad (\text{by the convexity of } x^{p} \text{ with } p<0)\\
& \leq \begin{cases*}
5 + \frac{\T_{\epsilon}^{1+a-b}}{1+a-b} + \frac{4\T_{\epsilon}^{1-a}}{1-a} & \text{ if } $1+a>b$,\\
5 + \log(\T_{\epsilon}) + \frac{4\T_{\epsilon}^{1-a}}{1-a} & \text{ if } $1+a=b$,\\
5 + \frac{1}{b-a-1}+ \frac{4\T_{\epsilon}^{1-a}}{1-a} & \text{ if } $1+a<b$.
\end{cases*}
\end{align*}
Combining the above two displays, dividing $\T_{\epsilon}$ on both sides, and using ``$\lesssim$" to neglect constant factors (i.e., not depending on $\T_{\epsilon}$), we obtain {\allowdisplaybreaks
\begin{align*}
\epsilon^2 & \leq \rbr{\frac{1}{\T_{\epsilon}}\sum_{t=0}^{\T_{\epsilon}-1}\mE[\|\nabla\mL_t\|] }^2 \leq \frac{1}{\T_{\epsilon}}\sum_{t=0}^{\T_{\epsilon}-1}\rbr{\mE[\|\nabla\mL_t\|]}^2 \leq  \frac{1}{\T_{\epsilon}}\sum_{t=0}^{\T_{\epsilon}-1}\mE[\|\nabla\mL_t\|^2]\\
& \lesssim \begin{cases*}
\frac{1}{\T_{\epsilon}^{1-a}} + \frac{1}{\T_{\epsilon}^{b-a}} + \frac{1}{\T_{\epsilon}^a} &  \text{ if } $1+a>b$,\\
\frac{1}{\T_{\epsilon}^{1-a}}  + \frac{1}{\T_{\epsilon}^a} & \text{ if } $1+a=b$,\\
\frac{1}{\T_{\epsilon}^{1-a}}  + \frac{1}{\T_{\epsilon}^a} & \text{ if } $1+a<b$,\\
\end{cases*} \quad (\text{use } 1/\T_{\epsilon} \leq 1/\T_{\epsilon}^a \text{ and } \log(\T_{\epsilon})/\T_{\epsilon} \lesssim 1/\T_{\epsilon}^a),\\
& \lesssim \begin{cases*}
\frac{1}{\T_{\epsilon}^{b-a}} + \frac{1}{\T_{\epsilon}^a} &  \text{ if } $1>b$,\\
\frac{1}{\T_{\epsilon}^{1-a}} + \frac{1}{\T_{\epsilon}^a} &  \text{ if } $1\leq b$,
\end{cases*} = \frac{1}{\T_{\epsilon}^{(1\wedge b)-a}} + \frac{1}{\T_{\epsilon}^a} \lesssim \frac{1}{\T_{\epsilon}^{a\wedge (1-a)\wedge (b-a)} }.
\end{align*}}
\noindent \hskip-4pt This completes the proof.

\section{Proofs of Section \ref{sec:4}}

\subsection{Proof of Lemma \ref{lem:4}}\label{pf:lem:4}

For notational brevity, we let $\bomega_t = (\bx_t - \tx, \blambda_t-\tlambda)$. By the scheme of Algorithm \ref{alg:1},~we~have
\begin{align*}
\bomega_{t+1} & \stackrel{\mathclap{\eqref{equ:update}}}{=}\bomega_t + \baralpha_t\bz_{t,\tau} = \bomega_t + \varphi_t\bz_{t,\tau} + \rbr{\baralpha_t -\varphi_t} \bz_{t,\tau}\\
& = \bomega_t +\varphi_t(I + C_t)\tbz_t + \varphi_t\cbr{\bz_{t,\tau} - (I + C_t)\tbz_t} + \rbr{\baralpha_t - \varphi_t}\bz_{t,\tau}\\
&\stackrel{\mathclap{\eqref{equ:Newton}}}{=}\bomega_t - \varphi_t(I + C_t)K_t^{-1}\bnabla\mL_t + \varphi_t\cbr{\bz_{t,\tau} - (I + C_t)\tbz_t} + \rbr{\baralpha_t - \varphi_t} \bz_{t,\tau}\\
& = \bomega_t - \varphi_t(I+C_t)K_t^{-1}\nabla\mL_t - \varphi_t(I+C_t)K_t^{-1}(\bnabla\mL_t - \nabla\mL_t) + \varphi_t\cbr{\bz_{t,\tau}- (I + C_t)\tbz_t} \\
& \quad + \rbr{\baralpha_t - \varphi_t} \bz_{t,\tau}\\
&\stackrel{\mathclap{\eqref{rec:def:b}}}{=} \; \bomega_t - \varphi_t(I+C_t)K_t^{-1}\nabla\mL_t  + \varphi_t\btheta^t+ \rbr{\baralpha_t - \varphi_t} \bz_{t,\tau}\\
& =\bomega_t - \varphi_t(I+C_t)(K^\star)^{-1}\nabla\mL_t - \varphi_t(I+C_t)\cbr{K_t^{-1} - (K^\star)^{-1}}\nabla\mL_t + \varphi_t\btheta^t+ \rbr{\baralpha_t - \varphi_t} \bz_{t,\tau}\\
&\stackrel{\mathclap{\eqref{rec:def:d}}}{=}\; \cbr{I - \varphi_t(I+C_t)}\bomega_t - \varphi_t(I+C_t)(K^\star)^{-1}\bpsi^t - \varphi_t(I+C_t)\cbr{K_t^{-1} - (K^\star)^{-1}}\nabla\mL_t \\
&\quad + \varphi_t\btheta^t+ \rbr{\baralpha_t - \varphi_t} \bz_{t,\tau}\\
& \stackrel{\mathclap{\eqref{rec:def:c}}}{=}\;\; \cbr{I - \varphi_t(I+C^\star)}\bomega_t + \varphi_t(\btheta^t + \bdelta^t) +  \rbr{\baralpha_t - \varphi_t} \bz_{t,\tau}.
\end{align*}
We apply the above equation recursively and show the result.~Moreover, under \mbox{Assumptions}~\ref{ass:2} and \ref{ass:3}, we know $\mE[\barg_i - \nabla f_i \mid \mF_{i-1}] = \0$ and, by \eqref{pequ:4}, $\mE\sbr{\bz_{i,\tau} - (I + C_i)\tbz_i \mid \mF_{i-1}} =\0$. Thus, $\mE[\btheta^i\mid \mF_{i-1}] = \0$ and $\btheta^i$ is a martingale difference.

\subsection{Proof of Lemma \ref{lem:5}}\label{pf:lem:5}

Let us denote $\rank(S) = r$. Since $K_t$, $K^\star$ have full rank, $\rank(K_tS) = \rank(K^\star S) = r$. Let $K^\star S = ED F^T$ be the truncated singular value decomposition of $K^\star S$. We have
{\small\begin{equation*}
E \in \mR^{(d+m)\times r}, \quad F\in \mR^{q\times r}, \quad E^TE = F^TF = I, \quad D = \diag(D_1, \ldots, D_r)\;\; \text{ with } D_1\geq \ldots\geq D_r>0.
\end{equation*}}
\noindent \hskip-3.5pt Similarly, we let $K_t S = E'D'(F')^T$. By direct calculation, we have
\begin{equation}\label{UU:1}
\|K_tS(S^TK_t^2S)^\dagger S^TK_t - K^\star S(S^T(K^\star)^2S)^\dagger S^TK^\star\| = \|EE^T - E'(E')^T\|.
\end{equation}
Define the principle angles $\theta_p$ between $\Span(E)$ and $\Span(E')$ to be $\theta_p = (\theta_{p,1}, \ldots, \theta_{p,r})$,~so~that $E^TE'$ has the singular value decomposition $E^TE' = P\cos(\theta_{p})Q^T$, where $P, Q\in \mR^{r\times r}$ are~orthonormal matrices and $\cos(\theta_{p}) = \diag(\cos(\theta_{p,1}), \ldots, \cos(\theta_{p,r}))$ (similar for $\sin(\theta_p)$).~We~\mbox{further} let $E^\perp\in \mR^{(d+m)\times (d+m-r)}$ be the complement of $E$, and express $E'$ as
\begin{equation}\label{UU:2}
E' = EA + E^\perp B.
\end{equation}
Then, $E^TE' = A = P\cos(\theta_{p})Q^T$ and $I = (E')^TE' = A^TA+B^TB$. By the above formulation,
\begin{align}\label{UU:3}
\|EE^T - E'(E')^T\|\;\; & \stackrel{\mathclap{\eqref{UU:2}}}{=} \;\; \nbr{(E, E^\perp)\begin{pmatrix}
I - AA^T & -AB^T\\
-BA^T & -BB^T
\end{pmatrix}\begin{pmatrix}
E^T\\
(E^\perp)^T
\end{pmatrix}} = \nbr{\begin{pmatrix}
I - AA^T & -AB^T\\
-BA^T & -BB^T
\end{pmatrix}} \nonumber\\
& \leq \nbr{\begin{pmatrix}
I - AA^T & \0\\
\0 & -BB^T
\end{pmatrix}} + \nbr{\begin{pmatrix}
\0 & AB^T\\
BA^T&\0
\end{pmatrix}} \nonumber\\
& \leq \max\{\|I - AA^T\|, \|BB^T\|\} + \|AB^T\| \nonumber\\
& = \max\{\|I - AA^T\|, \|I - A^TA\|\} + \|AB^T\| \nonumber\\
& = \|\sin(\theta_{p})\|^2 + \sqrt{\|P\cos(\theta_{p})\sin^2(\theta_{p})\cos(\theta_{p})P^T\|} \nonumber\\
& = \|\sin(\theta_{p})\|^2 + \|\sin(\theta_{p})\cos(\theta_{p})\| \leq 2\|\sin(\theta_{p})\|.
\end{align}
On the other hand, by Wedin's $\sin(\Theta)$ theorem \cite[(3.1)]{Wedin1972Perturbation}, we know
\begin{equation}\label{UU:4}
\|\sin(\theta_{p})\| \leq \frac{\|(K^\star - K_t)S\|}{D_r}.
\end{equation}
We let $F_r$ be the $r$-th column of $F$ and have $D_r^2 = F_r^TS^T(K^\star)^2S F_r \geq (\sigma_{\min}(K^\star))^2F_r^TS^TSF_r$. Since $\KER(K^\star S) = \KER(S)$ and $F_r\in \KER^\perp(K^\star S)$, we know $F_r \in \KER^\perp(S) =~\Span(S^T)$. Thus, $F_r^TS^TSF_r \geq \lambda_{\min}^+(S^TS)$, where $\lambda_{\min}^+(S^TS) = (\sigma_{\min}^{+}(S))^2$ is the least positive~eigenvalue of $S^TS$. Therefore, we have
\begin{equation}\label{UU:5}
D_r \geq \sigma_{\min}(K^\star)\sigma_{\min}^{+}(S).
\end{equation}
Combining all above derivations, we obtain
\begin{multline*}
\|K_tS(S^TK_t^2S)^\dagger S^TK_t - K^\star S(S^T(K^\star)^2S)^\dagger S^TK^\star\| \stackrel{\eqref{UU:1}}{=}\|EE^T - E'(E')^T\| \stackrel{\eqref{UU:3}}{\leq} 2\|\sin(\theta_{p})\| \\
\stackrel{\eqref{UU:4}}{\leq} \frac{2\|K_t-K^\star\|\cdot \|S\|}{D_r}\stackrel{\eqref{UU:5}}{\leq} \frac{2\|K_t-K^\star\|}{\sigma_{\min}(K^\star)}\cdot\frac{\|S\|}{\sigma_{\min}^{+}(S)}.
\end{multline*}
This completes the proof.

\subsection{Proof of Corollary \ref{cor:2}}\label{pf:cor:2}

Denote $A_t = I - \mE[K_tS(S^TK_t^2S)^\dagger S^TK_t\mid \bx_t, \blambda_t]$ and $A^\star = I -  \mE[K^\star S(S^T(K^\star)^2S)^\dagger S^TK^\star]$.~We~have \begin{align*}
\nbr{C_t-C^\star} & = \nbr{A_t^\tau - (A^\star)^\tau}\leq \nbr{A_t^{\tau-1}(A_t-A^\star)} + \nbr{(A_t^{\tau-1} - (A^\star)^{\tau-1})A^\star}\\
& \leq \|A_t-A^\star\| + \nbr{A_t^{\tau-1} - (A^\star)^{\tau-1}}\quad (\|A_t\| \vee \|A^\star\| \leq 1)\\
& \leq \tau \|A_t-A^\star\| \leq \tau \mE\sbr{\nbr{K_tS(S^TK_t^2S)^\dagger S^TK_t - K^\star S(S^T(K^\star)^2S)^\dagger S^TK^\star}\mid \bx_t, \blambda_t}\\
& \leq \frac{2\tau\|K_t-K^\star\|}{\sigma_{\min}(K^\star)} \mE\sbr{\|S\|\|S^\dagger\|} \leq \frac{2\tau\Upsilon_S}{\sigma_{\min}(K^\star)}\nbr{K_t-K^\star}\quad (\text{by Assumption \ref{ass:5}}).
\end{align*}
This completes the proof.

\subsection{Proof of Theorem \ref{thm:3}}\label{pf:thm:3}

We present some lemmas that bound $\I_{1,t}$, $\I_{2,t}$, and $\I_{3,t}$ in \eqref{rec:a}, \eqref{rec:b}, and \eqref{rec:c},~\mbox{respectively}. The proofs of these lemmas are presented in Appendix \ref{pf:lem:6} -- \ref{pf:lem:11}.

\begin{lemma}\label{lem:6}
Under Assumptions \ref{ass:1}, \ref{ass:2}(\ref{equ:BM:a}, \ref{equ:BM:e}), \ref{ass:3}, and $(\bx_t, \blambda_t)\rightarrow(\tx,\tlambda)$, suppose
\begin{equation}\label{cond:varphi}
\lim\limits_{t\rightarrow\infty} t\rbr{1 - \varphi_{t-1}/\varphi_t} = \varphi<0, \quad\quad \lim\limits_{t\rightarrow\infty} t\varphi_t = \tvarphi\in(0, \infty],\quad\quad 1.5(1-\rho^\tau) + \varphi/\tvarphi>0.
\end{equation}
Then, for any $\upsilon>0$,
\begin{equation}\label{equ:I_1t:as}
\I_{1,t} = o\big(\sqrt{\varphi_t\{\log(1/\varphi_t)\}^{1+\upsilon} }\; \big) \quad \text{ a.s}.
\end{equation}
Furthermore, if (\ref{equ:BM:a}) is strengthened to (\ref{equ:BM:b}), then we have
\begin{enumerate}[topsep=3pt,itemsep=0em,label=(\alph*):]
\item (asymptotic rate) $\I_{1,t} = O(\sqrt{\varphi_t\log(1/\varphi_t)})$ a.s.
\item (asymptotic normality) $\sqrt{1/\varphi_t}\cdot\I_{1,t} \stackrel{d}{\longrightarrow}\N(0, \Xi^\star)$ where $\Xi^\star$ is from (\ref{equ:Xi:n}).
\item (Berry-Esseen bound) For any vector $\bw = (\bw_{\bx}, \bw_{\blambda})\in \mR^{d+m}$ such that $\bw^T\Xi^\star\bw \neq 0$,
\begin{equation*}
\sup_{z\in\mR}\abr{P\rbr{\frac{\sqrt{1/\varphi_t}\cdot \bw^T\I_{1,t}}{\sqrt{\bw^T\Xi^\star\bw}} \leq z}  - P\rbr{\N(0,1)\leq z}} = O\big(\sqrt{\varphi_t}\log(1/\varphi_t)\; \big).
\end{equation*}
\end{enumerate}	

\end{lemma}

\begin{lemma}\label{lem:7}
Under the conditions of Lemma \ref{lem:6} with (\ref{equ:BM:a}) and assume for the adaptivity gap $\chi_t$ that for some $p, q\in(0, 1]$,
\begin{equation}\label{cond:chi}
\lim\limits_{t\rightarrow\infty} t\rbr{1 - \chi_{t-1}/\chi_t} = \chi<\varphi,\;\; (1 - \rho^\tau) + p(\chi - 0.5\varphi)/\tvarphi>0, \;\; (1 - \rho^\tau) + q(\chi - \varphi)/\tvarphi>0.
\end{equation}
Then, for any $\nu>0$ (if $q<1$, the second $O(\cdot)$ can be strengthened to $o(\cdot)$)
\begin{equation*}
\I_{2,t} = o\big(\chi_t^{p}/\varphi_t^{0.5p}\sqrt{\{\log(1/\chi_t)\}^{1+\nu}}\;\big) + O(\chi_t^q/\varphi_t^q)\quad \text{ a.s.}
\end{equation*}
Furthermore, if (\ref{equ:BM:a}) is strengthened to (\ref{equ:BM:b}), then we have (if $p<1$, the first $O(\cdot)$ can~also be strengthened to $o(\cdot)$)
\begin{equation*}
\I_{2,t} = O\big(\chi_t^{p}/\varphi_t^{0.5p}\sqrt{\log(1/\chi_t)}\; \big) + O(\chi_t^q/\varphi_t^q)\quad \text{ a.s.}
\end{equation*}

\end{lemma}

\begin{lemma}\label{lem:8}
Under the conditions of Lemma \ref{lem:7} with (\ref{equ:BM:a}), we have for any $\upsilon>0$,
\begin{equation*}
\I_{3,t} = o\big(\sqrt{\varphi_t\{\log(1/\varphi_t)\}^{1+\upsilon} } \; \big) + o\big(\chi_t^{p}/\varphi_t^{0.5p}\sqrt{\{\log(1/\chi_t)\}^{1+\nu}}\big) + o(\chi_t^q/\varphi_t^q) = o(\I_{1,t}+\I_{2,t})\quad \text{a.s.}
\end{equation*}
If (\ref{equ:BM:a}) is strengthened to (\ref{equ:BM:b}), the above result holds with $\upsilon = 0$.
\end{lemma}

We apply the above lemmas.~We first check the conditions \eqref{cond:varphi} and \eqref{cond:chi}. Since~$\varphi_t = (\beta_t + \eta_t)/2 = \beta_t + \chi_t/2$ and $\chi_t = o(\beta_t)$ (as implied by $\chi<\beta$), we know $\beta_t\leq \varphi_t \leq \beta_t + o(\beta_t)$~and $\lim\limits_{t\rightarrow\infty}t\varphi_t =\lim\limits_{t\rightarrow\infty}t\beta_t =\tbeta$. Furthermore, we have
\begin{align*}
\lim\limits_{t\rightarrow\infty} &t\rbr{1 - \frac{\varphi_{t-1}}{\varphi_t}} = \lim\limits_{t\rightarrow\infty} t\rbr{1 - \frac{\beta_{t-1}}{\beta_t} + \frac{\beta_{t-1}}{\beta_t}\cbr{1 - \frac{2+\chi_{t-1}/\beta_{t-1}}{2+\chi_t/\beta_t}}}\\
& = \beta + \lim\limits_{t\rightarrow\infty} t \rbr{ 1 - \frac{2+\chi_{t-1}/\beta_{t-1}}{2+\chi_t/\beta_t} } = \beta + \frac{1}{2}\lim\limits_{t\rightarrow \infty}t\rbr{\frac{\chi_t}{\beta_t} - \frac{\chi_{t-1}}{\beta_{t-1}}} \quad (\text{since } \chi_t = o(\beta_t))\\
& = \beta + \frac{1}{2}\lim\limits_{t\rightarrow\infty} \frac{\chi_t}{\beta_t}\cdot t\rbr{1 - \frac{\chi_{t-1}}{\chi_t}\cdot \frac{\beta_t}{\beta_{t-1}}} = \beta +  \frac{\chi - \beta}{2}\lim\limits_{t\rightarrow\infty} \frac{\chi_t}{\beta_t} = \beta.
\end{align*}
The above derivations show that $\varphi = \beta$ and $\tvarphi = \tbeta$. Thus, \eqref{cond:n:1} implies \eqref{cond:varphi} holds. Moreover, for any constant $p\in(0,1]$ such that $(1-\rho^\tau) + p(\chi-0.5\beta)/\tbeta = (1-\rho^\tau) + p(\chi-0.5\varphi)/\tvarphi>0$,~we simply let $q=p$ and have $(1-\rho^\tau) + q(\chi-\varphi)/\tvarphi>0$.~Thus, \eqref{cond:chi} holds with $q=p$. We~note~that for any $\nu\geq 0$, 
\begin{align*}
& \lim\limits_{t\rightarrow\infty}t\rbr{1 - \frac{\varphi_{t-1}^{0.5p}\{\log(1/\chi_{t-1})\}^{0.5(1+\nu)}}{\varphi_t^{0.5p}\{\log(1/\chi_t)\}^{0.5(1+\nu)}}} \quad \stackrel{\mathclap{\text{Lem. \ref{aux:lem:5}}}}{=} \quad\; 0.5p\varphi + 0.5(1+\nu)\lim_{t\rightarrow\infty}t\rbr{1 - \frac{\log(1/\chi_{t-1})}{\log(1/\chi_t)}} \\
& = 0.5p\varphi+0.5(1+\nu)\lim\limits_{t\rightarrow\infty} t\rbr{\frac{\log(\chi_{t-1}/\chi_t)}{\log(1/\chi_t)}} = 0.5p\varphi+0.5(1+\nu) \lim_{t\rightarrow\infty} t \rbr{\frac{\frac{\chi_{t-1} - \chi_t}{\chi_t} + O\rbr{\frac{(\chi_{t-1} - \chi_t)^2}{\chi_t^2}}}{\log(1/\chi_t)}}\\
&  = 0.5p\varphi - 0.5(1+\nu)\chi\lim_{t\rightarrow\infty}1/\log(1/\chi_t) = 0.5p\varphi<0 \quad \text{(by \eqref{cond:varphi} and $\chi_t\rightarrow 0$)}.
\end{align*}
Thus, by Lemma \ref{aux:lem:1}, we know $\chi_t^{p}/\varphi_t^{0.5p}\sqrt{\{\log(1/\chi_t)\}^{1+\nu}} = o(\chi_t^p/\varphi_t^p)$.~The convergence~rate~of $(\bx_t, \blambda_t)$ comes from Lemmas \ref{lem:4}, \ref{lem:6}, \ref{lem:7}, \ref{lem:8}, the above fact, and the fact that $\beta_t\leq \varphi_t\leq 2\beta_t$. We complete the proof.

\subsubsection{Proof of Lemma \ref{lem:6}}\label{pf:lem:6}

We need a preparation lemma. Recall that we suppose $(\tx,\tlambda)$ is a local solution of \eqref{pro:1} with $G^\star$ being full row rank and $\nabla_{\bx}^2\mL^\star$ being positive definite in the null space~\mbox{$\{\bx\in\mR^d: G^\star\bx = \0\}$}.

\begin{lemma}\label{lem:11}
Under Assumptions \ref{ass:1}, \ref{ass:2}(\ref{equ:BM:d}) and $(\bx_t, \blambda_t)\rightarrow(\tx,\tlambda)$, we have $\frac{1}{t}\sum_{i=0}^{t-1}\bnabla_{\bx}^2\mL_i \\ \rightarrow \nabla_{\bx}^2\mL^\star$ as $t\rightarrow\infty$. Further, with a small $\gamma_{RH}$ and a large $\Upsilon_B$, $\Delta_t=\0$ for all large enough~$t$.
\end{lemma}

Let $I+C^\star = U\Sigma U^T$ with $\Sigma = \diag(\sigma_1, \ldots, \sigma_{d+m})$ be the eigenvalue decomposition.~Then, \begin{equation}\label{def:I1t}
\I_{1,t} \stackrel{\eqref{rec:a}}{=} \sum_{i=0}^t\prod_{j=i+1}^{t}\cbr{I - \varphi_j(I+C^\star)}\varphi_i\btheta^i = U \sum_{i=0}^t\prod_{j=i+1}^{t}\cbr{I - \varphi_j\Sigma}\varphi_i U^T\btheta^i.
\end{equation}
Since $\mE[\btheta^i | \mF_{i-1}] = \0$, we aim to apply the strong law of large number \citep[Theorem 1.3.15]{Duflo1997Random}, the central limit theorem \citep[Corollary 2.1.10]{Duflo1997Random}, and the Berry-Esseen~inequality \citep[Theorem 2.1]{Fan2019Exact} to show each result in the lemma. We~\mbox{compute}~the~\mbox{conditional}~covariance of $\I_{1,t}$, which is defined as \citep[Proposition 1.3.7]{Duflo1997Random}
\begin{equation}\label{I_1t_q}
\langle\I_{1}\rangle_t \coloneqq U\sum_{i=0}^{t}\prod_{j=i+1}^{t}\cbr{I - \varphi_j\Sigma}\varphi_i^2 U^T\mE[\btheta^i(\btheta^i)^T\mid \mF_{i-1}]U\big(\prod_{j=i+1}^{t}\cbr{I - \varphi_j\Sigma}\big)^TU^T.
\end{equation}
For the term $\mE[\btheta^i(\btheta^i)^T\mid \mF_{i-1}]$, we note that
\begin{align}\label{var:pro}
\hskip-1cm\mE[\btheta^i(\btheta^i)^T  &\mid \mF_{i-1}]  \stackrel{\eqref{rec:def:b}}{=} \mE\big[\cbr{(I+C_i)K_i^{-1}(\bnabla\mL_i - \nabla\mL_i)-\cbr{\bz_{i,\tau}-  (I+C_i)\tbz_i } } \nonumber\\
& \quad \quad \cbr{(I+C_i)K_i^{-1}(\bnabla\mL_i - \nabla\mL_i)-\cbr{\bz_{i,\tau}- (I+C_i)\tbz_i } }^T \mid \mF_{i-1}\big] \nonumber\\
&\stackrel{\mathclap{\eqref{pequ:4}}}{=}\;\; (I+C_i)K_i^{-1}\mE\sbr{(\bnabla\mL_i - \nabla\mL_i)(\bnabla\mL_i - \nabla\mL_i)^T\mid \mF_{i-1}}K_i^{-1}(I+C_i) \nonumber\\
&  \quad + \mE[\cbr{\bz_{i,\tau}-  (I+C_i)\tbz_i }\cbr{\bz_{i,\tau}-  (I+C_i)\tbz_i }^T \mid \mF_{i-1}] \eqqcolon \J_{1,i} + \J_{2,i}. 
\end{align}
For the term $\J_{1,i}$, we apply Assumption \ref{ass:2} and have $\mE[(\barg_i-\nabla f_i)(\barg_i-\nabla f_i)^T \mid \mF_{i-1}] =~\mE[\barg_i\barg_i^T\mid \mF_{i-1}] - \nabla f_i\nabla^T f_i$. We also note that
\begin{align*}
& \nbr{\mE[\barg_i\barg_i^T - \nabla f(\bx^\star; \xi)\nabla^T f(\bx^\star; \xi)\mid \mF_{i-1}]} \\
& \leq 2\mE\sbr{\|\barg_i - \nabla f(\bx^\star;\xi)\|\cdot \|\barg_i\| \mid \mF_{i-1}} + \mE\sbr{\|\barg_i - \nabla f(\bx^\star;\xi)\|^2\mid \mF_{i-1}}\\
& \leq 2\sqrt{\mE\sbr{\|\barg_i - \nabla f(\bx^\star;\xi)\|^2\mid \mF_{i-1}}}\sqrt{\mE\sbr{\|\barg_i\|^2\mid \mF_{i-1}}} + \mE\sbr{\|\barg_i - \nabla f(\bx^\star;\xi)\|^2\mid \mF_{i-1}},
\end{align*}
and
\begin{equation*}
\mE\sbr{\|\barg_i - \nabla f(\bx^\star;\xi)\|^2\mid \mF_{i-1}} \leq \mE[\sup_{\bx\in\mX}\|\nabla^2f(\bx;\xi)\|^2]\cdot \|\bx_i - \bx^\star\|^2 \stackrel{\eqref{equ:BM:e}}{\leq} \Upsilon_m\|\bx_i - \bx^\star\|^2.
\end{equation*}
By Assumptions \ref{ass:1}, \ref{ass:2}\eqref{equ:BM:a}, we suppose $\|\nabla f_i\|\leq \Upsilon_u$ (we abuse $\Upsilon_u$ from \eqref{equ:upper:bound}) and obtain
\begin{equation*}
\mE[\|\barg_i\|^2\mid \mF_{i-1}] =  \|\nabla f_i\|^2 + \mE[\|\barg_i - \nabla f_i\|^2\mid \mF_{i-1}] \leq \Upsilon_u^2 + \Upsilon_m\leq 2(\Upsilon_u^2\vee\Upsilon_m).
\end{equation*}
Combining the above three displays, we have
\begin{multline}\label{Lip:map}
\nbr{\mE[\barg_i\barg_i^T - \nabla f(\bx^\star; \xi)\nabla^T f(\bx^\star; \xi)\mid \mF_{i-1}]} \\ \leq 2\sqrt{2\Upsilon_m}(\Upsilon_u\vee\sqrt{\Upsilon_m})(\|\bx_i - \bx^\star\| + \|\bx_i-\bx^\star\|^2)\rightarrow 0.
\end{multline}
This implies that
\begin{equation}\label{var:lim}
\lim\limits_{i\rightarrow \infty}\mE[(\barg_i-\nabla f_i)(\barg_i-\nabla f_i)^T \mid \mF_{i-1}] = \mE[\nabla f(\bx^\star; \xi)\nabla^T f(\bx^\star; \xi)] - \nabla f(\bx^\star)\nabla^T f(\bx^\star).
\end{equation}
Furthermore, by Lemma \ref{lem:11} we know $K_i\rightarrow K^\star$ as $i\rightarrow \infty$. Since $\|K_iS(S^TK_t^2S)^\dagger S^TK_i\| \leq 1$, we apply dominated convergence theorem \citep[Theorem~1.6.7]{Durrett2019Probability} and Lemma~\ref{lem:5},~and have $\lim_{i\rightarrow \infty}\mE[K_iS(S^TK_i^2S)^\dagger S^TK_i\mid \bx_i, \blambda_i] = \mE[K^\star S(S^T(K^\star)^2S)^\dagger S^TK^\star]$. Here, the~expectation is taken over randomness of $S$. Thus, $C_i\rightarrow C^\star$. By the definition \eqref{equ:Omega}, we obtain
\begin{equation}\label{J_1i}
\J_{1,i} = (I+C^\star)\tOmega(I+C^\star) + O(\K_{1,i})
\end{equation}
with $\K_{1,i} \rightarrow 0$ as $i\rightarrow \infty$ almost surely. For the term $\J_{2,i}$, we apply \eqref{equ:z:recur} and define~$\tC_i\coloneqq-\prod_{j = 0}^{\tau-1}C_{i,j}$. Then, we have
\begin{align}\label{J_2i}
\J_{2,i} & = \mE[(\tC_i - C_i)\tbz_i\tbz_i^T(\tC_i^T-C_i^T) \mid \mF_{i-1}] \stackrel{\mathclap{\eqref{equ:Newton}}}{=} \mE[(\tC_i - C_i)K_i^{-1}\bnabla\mL_i\bnabla^T\mL_iK_i^{-1}(\tC_i^T - C_i^T) \mid \mF_{i-1} ] \nonumber\\
& = \mE[(\tC_i - C_i)K_i^{-1}(\bnabla\mL_i- \nabla\mL_i)(\bnabla\mL_i- \nabla\mL_i)^TK_i^{-1}(\tC_i^T - C_i^T) \mid \mF_{i-1} ] \nonumber\\
& \quad + \mE[(\tC_i - C_i)K_i^{-1}\nabla\mL_i\nabla^T\mL_iK_i^{-1}(\tC_i^T - C_i^T) \mid \mF_{i-1} ] \nonumber\\
& \quad + \mE[(\tC_i - C_i)K_i^{-1}(\bnabla\mL_i- \nabla\mL_i)\nabla^T\mL_iK_i^{-1}(\tC_i^T - C_i^T) \mid \mF_{i-1} ] \nonumber\\
& \quad + \mE[(\tC_i - C_i)K_i^{-1}\nabla\mL_i(\bnabla\mL_i- \nabla\mL_i)^TK_i^{-1}(\tC_i^T - C_i^T) \mid \mF_{i-1} ].
\end{align}
For the last two terms, we apply the tower property of conditional expectation by first~conditioning on the randomness of $\{S_{i,j}\}_j$ to take expectation over the randomness of $\xi_i$,  and then taking expectation~over the randomness of $\{S_{i,j}\}_j$. In particular, we have (similar for~the second last term in \eqref{J_2i})
\begin{align*}
& \mE[(\tC_i - C_i)K_i^{-1}\nabla\mL_i(\bnabla\mL_i - \nabla\mL_i)^TK_i^{-1}(\tC_i^T - C_i^T) \mid \mF_{i-1} ]\\
& = \mE[(\tC_i - C_i)K_i^{-1}\nabla\mL_i \mE[\bnabla\mL_i - \nabla\mL_i\mid \mF_{i-1}\cup \sigma(\{S_{i,j}\}_j)]^TK_i^{-1}(\tC_i^T - C_i^T) \mid \mF_{i-1} ]\\
& = \mE[(\tC_i - C_i)K_i^{-1}\nabla\mL_i\mE[\bnabla\mL_i - \nabla\mL_i\mid \mF_{i-1}]^TK_i^{-1}(\tC_i^T - C_i^T) \mid \mF_{i-1} ] = \0.
\end{align*}
For the second term in \eqref{J_2i}, it converges to zero almost surely as $i\rightarrow\infty$ since $\|\tC_i\|\vee\|C_i\|\leq 1$, $\|K_i^{-1}\|\leq \Upsilon_K$, and $\nabla\mL_i\rightarrow 0$. For the first term in \eqref{J_2i}, we have
\begin{align*}
& \mE[(\tC_i - C_i)K_i^{-1}(\bnabla\mL_i - \nabla\mL_i)(\bnabla\mL_i - \nabla\mL_i)^TK_i^{-1}(\tC_i^T - C_i^T) \mid \mF_{i-1} ]\\
& = \mE[(\tC_i - C_i)K_i^{-1}\mE\sbr{(\bnabla\mL_i - \nabla\mL_i)(\bnabla\mL_i - \nabla\mL_i)^T\mid \mF_{i-1}\cup \sigma(\{S_{i,j}\}_j)} K_i^{-1}(\tC_i^T - C_i^T) \mid \mF_{i-1} ]\\
& = \mE[(\tC_i - C_i)K_i^{-1}\mE\sbr{(\bnabla\mL_i - \nabla\mL_i)(\bnabla\mL_i - \nabla\mL_i)^T\mid \mF_{i-1}}K_i^{-1}(\tC_i^T - C_i^T) \mid \mF_{i-1} ] \\
& \stackrel{\mathclap{\eqref{equ:tC}}}{\longrightarrow}\; \mE[(\tC^\star - C^\star)\tOmega((\tC^\star)^T - C^\star)] = \mE[\tC^\star\tOmega(\tC^\star)^T] - C^\star\tOmega C^\star.
\end{align*}
Again, the convergence here is due to the dominated convergence theorem, \eqref{var:lim}, and~$K_i\rightarrow K^\star$;~and the expectation is taken over the randomness of $\tau$ sketch matrices $S_1,\ldots, S_\tau$ only. Thus, combining the above two displays with \eqref{J_2i}, we have \begin{equation}\label{J_2ii}
\J_{2,i} =  \mE[\tC^\star\tOmega(\tC^\star)^T] - C^\star\tOmega C^\star + O(\K_{2,i})
\end{equation}
with $\K_{2,i}\rightarrow 0$ as $i\rightarrow \infty$ almost surely. Combining \eqref{J_2ii}, \eqref{J_1i}, and \eqref{var:pro}, we obtain
\begin{equation*}
\mE[\btheta^i(\btheta^i)\mid \mF_{i-1}] = \mE[(I+\tC^\star)\tOmega(I+\tC^\star)^T] + O(\K_{1,i}+\K_{2,i}).
\end{equation*}
By the definition of $\langle \I_1\rangle_t$ in \eqref{I_1t_q}, let us denote $\Gamma \coloneqq U^T\mE[(I+\tC^\star)\tOmega(I+\tC^\star)^T] U$.~For~any~$k, l\in \{1,\ldots,d+m\}$, the $(k,l)$ entry of the matrix $U^T\langle \I_{1}\rangle_t U$ can be written as
\begin{equation*}
[U^T\langle \I_{1}\rangle_t U]_{k,l} = \sum_{i=0}^{t}\prod_{j = i+1}^t(1 - \varphi_j\sigma_k)(1-\varphi_j\sigma_l)\varphi_i^2(\Gamma_{kl} + r_{i,kl}),
\end{equation*}
where $r_{i,kl}\rightarrow 0$ as $i\rightarrow \infty$ almost surely. By Lemma \ref{lem:1}(b) and the fact that $C_i\rightarrow C^\star$~as~$i\rightarrow \infty$, we know $\|C^\star\|\leq \rho^\tau$. Since $C^\star\preceq \0$, we have $0<1-\rho^\tau\leq \sigma_i \leq 1$ for $i=1,\ldots, d+m$, which implies $\sigma_k+\sigma_l \geq 2(1 - \rho^\tau)$. Using the condition \eqref{cond:varphi}, Lemmas \ref{aux:lem:1} and \ref{aux:lem:2}, we~obtain $[U^T\langle \I_{1}\rangle_t U]_{k,l}/\varphi_t\rightarrow \Gamma_{kl}/(\sigma_k+\sigma_l + \varphi/\tvarphi)$ as $t\rightarrow \infty$ almost surely. Thus, by \eqref{equ:Xi:n}, we have
\begin{equation}\label{equ:I_t_var}
\langle \I_{1}\rangle_t /\varphi_t\stackrel{a.s.}{\longrightarrow} U(\Theta\circ \Gamma)U^T = \Xi^\star.
\end{equation}
Then, \cite[Theorem 1.3.15]{Duflo1997Random} indicates \eqref{equ:I_1t:as} holds. This shows the first part~of~the~results. For the second part of the results, we assume the condition~$\eqref{equ:BM:b}$ and have \begin{align}\label{pequ:27}
&\mE[\|\btheta^i\|^3\mid \mF_{i-1}]\;\; \stackrel{\mathclap{\eqref{rec:def:b}}}{\leq}\;\; 4\rbr{\mE[\|(I+C_i)K_i^{-1}(\bnabla\mL_i - \nabla\mL_i)\|^3 \mid \mF_{i-1}] + \mE[\|\bz_{i,\tau} - (I+C_i)\tbz_i\|^3\mid \mF_{i-1}]} \nonumber\\
& \stackrel{\mathclap{\eqref{equ:z:recur}}}{\leq} 4\rbr{8\Upsilon_K^3\mE[\|\barg_i - \nabla f_i\|^3\mid \mF_{i-1}] + \mE[\|(\tC_i - C_i)\tbz_i\|^3\mid \mF_{i-1}]}\quad (\|C_i\|\leq 1,  \|K_i^{-1}\|\leq \Upsilon_K) \nonumber\\
& \stackrel{\mathclap{\eqref{equ:BM:b}}}{\leq}\; 4\rbr{8\Upsilon_K^3\Upsilon_m+ 8\mE[\|\tbz_i\|^3\mid \mF_{i-1}]} \quad (\|\tC_i\|\vee\|C_i\|\leq 1) \nonumber\\
& \stackrel{\mathclap{\eqref{equ:Newton}}}{\leq} 4\rbr{8\Upsilon_K^3\Upsilon_m + 8\Upsilon_K^3\mE[\|\bnabla\mL_i\|^3\mid \mF_{i-1}]} \quad (\|K_i^{-1}\|\leq \Upsilon_K) \nonumber\\
&\stackrel{\mathclap{\eqref{equ:Newton}}}{\leq} 4\rbr{8\Upsilon_K^3\Upsilon_m + 8\Upsilon_K^3\cbr{4\|\nabla\mL_i\|^3 + 4\mE[\|\barg_i - \nabla f_i\|^3\mid \mF_{i-1}]}} \nonumber\\
& \stackrel{\mathclap{\eqref{equ:BM:b}}}{\leq} 4\rbr{8\Upsilon_K^3\Upsilon_m + 8\Upsilon_K^3\cbr{4\Upsilon_u^3 + 4\Upsilon_m}}\quad (\text{also use } \eqref{equ:upper:bound}).
\end{align}
Thus, $\btheta^i$ has bounded third moment; and \cite[pp. 554]{Wang1995Asymptotic} together with \eqref{equ:I_t_var} give~the result (a). For (b), we verify the Lindeberg's condition. For any $\epsilon>0$, we have
\begin{align*}
&\frac{1}{\varphi_t}\sum_{i=0}^{t}\mE\big[\big\|\prod_{j=i+1}^{t}\cbr{I - \varphi_j(I+C^\star)}\varphi_i\btheta^i\big\|^2\cdot \b1_{\nbr{\prod_{j=i+1}^{t}\cbr{I - \varphi_j(I+C^\star)}\varphi_i\btheta^i}\geq \epsilon\sqrt{\varphi_t} } \mid \mF_{i-1}\big]\\
& \leq \frac{1}{\epsilon\varphi_t^{3/2}}\sum_{i=0}^{t}\mE\big[\big\|\prod_{j=i+1}^{t}\cbr{I - \varphi_j(I+C^\star)}\varphi_i\btheta^i\big\|^3\mid \mF_{i-1}\big]\\
& = \frac{1}{\epsilon\varphi_t^{3/2}}\sum_{i=0}^{t}\mE\big[\big\|\prod_{j=i+1}^{t}\cbr{I - \varphi_j\Sigma}\varphi_iU^T\btheta^i\big\|^3\mid \mF_{i-1}\big].
\end{align*}
To show the right hand side converges to zero, it suffices to show that each entry of the vector on the right hand side converges to zero. In particular, we show for any $1\leq k\leq d+m$,
\begin{equation*}
\frac{1}{\epsilon\varphi_t^{3/2}}\sum_{i=0}^{t} \prod_{j=i+1}^{t}\abr{1-\varphi_j\sigma_k}^3\varphi_i^3\mE[|[U^T\btheta^i]_k|^3\mid \mF_{i-1}] \longrightarrow 0\quad\quad  \text{ as } \quad t\rightarrow\infty.
\end{equation*}
By \eqref{pequ:27} and $\mE[|[U^T\btheta^i]_k|^3| \mF_{i-1}] \leq \mE[\|\btheta^i\|^3| \mF_{i-1}]$, we only show $\sum_{i=0}^{t} \prod_{j=i+1}^{t}|1-\varphi_j\sigma_k|^3\varphi_i^3 = o(\varphi_t^{3/2})$. Without loss of generality, we suppose $1 - \varphi_j\sigma_k\geq 0$ for all $j\geq1$ and show
\begin{equation}\label{pequ:28}
\sum_{i=0}^{t} \prod_{j=i+1}^{t}(1-\varphi_j\sigma_k)^3\varphi_i^3 = o(\varphi_t^{3/2}).
\end{equation}
Otherwise, since $\varphi<0$ from \eqref{cond:varphi}, Lemma \ref{aux:lem:1} shows that $\varphi_i\rightarrow 0$. Thus, there exists $\tilde{t}$ such that $1 - \varphi_j\sigma_k\geq 0$, $\forall j\geq \tilde{t}$. Then, 
\begin{align}\label{pequ:29}
\sum_{i=0}^{t} \prod_{j=i+1}^{t}\abr{1-\varphi_j\sigma_k}^3\varphi_i^3 & = \sum_{i=0}^{\tilde{t}-2}\prod_{j=i+1}^{t}\abr{1-\varphi_j\sigma_k}^3\varphi_i^3 + \sum_{i=\tilde{t}-1}^{t}\prod_{j=i+1}^{t}(1-\varphi_j\sigma_k)^3\varphi_i^3 \nonumber\\
& = \prod_{j=\tilde{t}}^{t}(1-\varphi_j\sigma_k)^3 \sum_{i=0}^{\tilde{t}-2}\prod_{j=i+1}^{\tilde{t}-1}\abr{1-\varphi_j\sigma_k}^3\varphi_i^3  + \sum_{i=\tilde{t}-1}^{t}\prod_{j=i+1}^{t}(1-\varphi_j\sigma_k)^3\varphi_i^3 \nonumber\\
& = \sum_{i=\tilde{t}-1}^{t}\prod_{j=i+1}^{t}(1-\varphi_j\sigma_k)^3(\varphi_i')^3,
\end{align}
where 
\begin{equation*}
\varphi_{\tilde{t}-1}' = \rbr{\sum_{i=0}^{\tilde{t}-2}\prod_{j=i+1}^{\tilde{t}-1}\abr{1-\varphi_j\sigma_k}^3\varphi_i^3 + \varphi_{\tilde{t}-1}^3}^{1/3}, \quad \text{ and }\quad \varphi_i' = \varphi_i,\quad \forall i\geq \tilde{t}. 
\end{equation*}
Note that \eqref{pequ:29} has the same form as \eqref{pequ:28}, and $\varphi_i'$ differs from $\varphi_i$ only at $i = \tilde{t}-1$.~Thus, \eqref{pequ:29} and \eqref{pequ:28} have the same limit. For \eqref{pequ:28}, we apply Lemma \ref{aux:lem:5} and observe that
\begin{equation*}
\lim\limits_{i\rightarrow\infty}i\rbr{1 - \varphi_{i-1}^2/\varphi_i^2}  \stackrel{\eqref{cond:varphi}}{=} 2\varphi\quad\quad \text{ and }\quad\quad 3\sigma_k + 2\varphi/\tvarphi\stackrel{\eqref{cond:varphi}}{>}0.
\end{equation*}
Thus, Lemma \ref{aux:lem:2} suggests that 
\begin{equation*}
\sum_{i=0}^{t} \prod_{j=i+1}^{t}(1-\varphi_j\sigma_k)^3\varphi_i^3 = O(\varphi_t^2).
\end{equation*}
This verifies \eqref{pequ:28} and further verifies the Lindeberg's condition. Thus, the central~limit~theorem of martingale in \cite[Corollary 2.1.10]{Duflo1997Random} leads to (b). For (c), we apply \citep[Theorem 2.1]{Fan2019Exact} with $\epsilon = \sqrt{\varphi_t}$, $\delta = 0$, $\rho=1$ (in their notation), as proved for~\mbox{verifying}~the~Lindeberg's condition above, and obtain the result immediately. This completes the~proof.

\subsubsection{Proof of Lemma \ref{lem:7}}\label{pf:lem:7}

We have
\begin{equation}\label{def:I2t}
\I_{2,t} \stackrel{\eqref{rec:b}}{=} \sum_{i=0}^t\prod_{j=i+1}^{t}\cbr{I - \varphi_j(I+C^\star)}(\baralpha_i - \varphi_i)\bz_{i,\tau} = U\sum_{i=0}^t\prod_{j=i+1}^{t}\cbr{I - \varphi_j\Sigma}(\baralpha_i - \varphi_i)U^T\bz_{i,\tau}.
\end{equation}
Thus, for any $1\leq k\leq d+m$, we have $[U^T\I_{2,t}]_k = \sum_{i=0}^{t}\prod_{j=i+1}^{t}(1 - \varphi_j\sigma_k)(\baralpha_i - \varphi_i)[U^T\bz_{i,\tau}]_k$. For the same reason as \eqref{pequ:28} and \eqref{pequ:29}, we suppose for any $j\geq 0$ that $1 - \varphi_j\sigma_k \geq 0$.~Then,
\begin{align}\label{equ:UI_2tk}
& \abr{[U^T\I_{2,t}]_k} \leq  \frac{1}{2}\sum_{i=0}^{t}\prod_{j=i+1}^{t}|1 - \varphi_j\sigma_k|\chi_i \abr{[U^T\bz_{i,\tau}]_k} = \frac{1}{2}\sum_{i=0}^{t}\prod_{j=i+1}^{t}(1 - \varphi_j\sigma_k)\chi_i \abr{[U^T\bz_{i,\tau}]_k} \nonumber\\
& = \frac{1}{2}\sum_{i=0}^{t}\prod_{j=i+1}^{t}(1 - \varphi_j\sigma_k)\chi_i \mE\sbr{\abr{[U^T\bz_{i,\tau}]_k} \mid \mF_{i-1}} + \frac{1}{2}\sum_{i=0}^{t}\prod_{j=i+1}^{t}(1 - \varphi_j\sigma_k)\chi_i \big\{\abr{[U^T\bz_{i,\tau}]_k} \nonumber \\
& \quad -  \mE\sbr{\abr{[U^T\bz_{i,\tau}]_k} \mid \mF_{i-1}} \big\} \eqqcolon \J_{3,t,k} + \J_{4,t,k}. 
\end{align}
We analyze $\J_{3,t,k}$ and $\J_{4,t,k}$ separately as follows. We first show $\abr{[U^T\bz_{i,\tau}]_k}$ has bounded~variance. We have
\begin{multline}\label{pequ:9}
\mE\big[\cbr{\abr{[U^T\bz_{i,\tau}]_k} -  \mE\sbr{\abr{[U^T\bz_{i,\tau}]_k} \mid \mF_{i-1}} }^2\mid \mF_{i-1} \big] \\
\leq \mE\big[\abr{[U^T\bz_{i,\tau}]_k}^2\mid \mF_{i-1}\big] \leq \mE\sbr{\|\bz_{i,\tau}\|^2\mid \mF_{i-1}}\stackrel{\eqref{pequ:7}}{\leq} 16\Upsilon_K^2(\Upsilon_u^2\vee \Upsilon_m).
\end{multline}
Thus, $\J_{4,t,k}$ is square integrable. Its variance is bounded by
\begin{align*}
\langle\J_{4,k}\rangle_t & \coloneqq \frac{1}{4}\sum_{i=0}^t\prod_{j=i+1}^{t}(1 - \varphi_j\sigma_k)^2\chi_i^2\mE\sbr{\cbr{\abr{[U^T\bz_{i,\tau}]_k} -  \mE\sbr{\abr{[U^T\bz_{i,\tau}]_k} \mid \mF_{i-1}} }^2\mid \mF_{i-1} }\\
& \stackrel{\mathclap{\eqref{pequ:9}}}{\leq}\;\; 4\Upsilon_K^2(\Upsilon_u^2\vee \Upsilon_m)\sum_{i=0}^t\prod_{j=i+1}^{t}(1 - \varphi_j\sigma_k)^2\chi_i^2.
\end{align*}
Using \eqref{cond:varphi} and \eqref{cond:chi}, we know \begin{multline}\label{com:chi:phi}
\lim\limits_{i\rightarrow \infty}i\rbr{1 - \frac{\chi_{i-1}^2/\varphi_{i-1}}{\chi_i^2/\varphi_i}}  = \lim\limits_{i\rightarrow \infty}i\rbr{1 - \frac{\chi_{i-1}^2}{\chi_i^2} + \frac{\chi_{i-1}^2}{\chi_i^2}\rbr{1 - \frac{\varphi_i}{\varphi_{i-1}}} }\\
= \lim\limits_{i\rightarrow \infty}i\cbr{\rbr{1 - \frac{\chi_{i-1}}{\chi_i}}\rbr{1+ \frac{\chi_{i-1}}{\chi_i}} - \frac{\chi_{i-1}^2}{\chi_i^2}\frac{\varphi_i}{\varphi_{i-1}}\rbr{1 - \frac{\varphi_{i-1}}{\varphi_i}} } = 2\chi - \varphi.
\end{multline}
Further, \eqref{cond:chi} implies $2\sigma_k + p(2\chi - \varphi)/\tvarphi >0$ for some constant $p\in(0, 1]$. Thus, Lemma~\ref{aux:lem:2} leads to $\langle\J_{4,k}\rangle_t = O(\chi_t^{2p}/\varphi_t^p)$ (when $p\in(0,1)$, $O(\cdot)$ can be strengthened to $o(\cdot)$); and the strong law of large number \citep[Theorem 1.3.15]{Duflo1997Random} suggests that for any $\nu>0$,
\begin{equation}\label{pequ:30}
\J_{4,t,k} = o\rbr{\sqrt{\chi_t^{2p}/\varphi_t^p\cdot\{\log(\varphi_t^p/ \chi_t^{2p}) \}^{1+\nu} } } = o\rbr{\sqrt{\chi_t^{2p}/\varphi_t^p\cdot\{\log(1/ \chi_t) \}^{1+\nu} } }.
\end{equation}
If \eqref{equ:BM:a} is strengthened to \eqref{equ:BM:b}, then we follow \eqref{pequ:9}, \eqref{pequ:7}, and \eqref{pequ:27}, and can show $\abr{[U^T\bz_{i,\tau}]_k} -  \mE\sbr{\abr{[U^T\bz_{i,\tau}]_k} \mid \mF_{i-1}}$ has bounded third moment. Thus, \cite[pp. 554]{Wang1995Asymptotic} suggests that $\J_{4,t,k} = O(\chi_t^{p}/\varphi_t^{0.5p}\sqrt{\log (1/\chi_t)})$. When $p\in(0,1)$, $O(\cdot)$ can be strengthened~to $o(\cdot)$ due to $\langle\J_{4,k}\rangle_t = o(\chi_t^{2p}/\varphi_t^p)$. For the term $\J_{3,t,k}$, we have
\begin{multline}\label{pequ:10}
\J_{3,t,k}  \leq \frac{1}{2}\sum_{i=0}^{t}\prod_{j=i+1}^{t}(1 - \varphi_j\sigma_k)\chi_i \sqrt{\mE[\abr{[U^T\bz_{i,\tau}]_k}^2 \mid \mF_{i-1}]} \; \\
\stackrel{\mathclap{\eqref{pequ:9}}}{\leq}\;\; 2\Upsilon_K(\Upsilon_u\vee\sqrt{\Upsilon_m})  \sum_{i=0}^{t}\prod_{j=i+1}^{t}(1 - \varphi_j\sigma_k)\chi_i.
\end{multline}
Using \eqref{cond:varphi}, \eqref{cond:chi}, and the facts that $\lim\limits_{i\rightarrow \infty} i(1 - \frac{\chi_{i-1}/\varphi_{i-1}}{\chi_i/\varphi_i}) =  \chi - \varphi$ and $\sigma_k + q(\chi-\varphi)/\tvarphi >0$ (as implied by \eqref{cond:chi}), we apply Lemma \ref{aux:lem:2} and obtain $\J_{3,t,k} = O(\chi_t^q/\varphi_t^q)$. When $q\in (0,1)$,~$O(\cdot)$ can be strengthened to $o(\cdot)$. Combining with \eqref{equ:UI_2tk} and the bound of $\J_{4,t,k}$, we~complete~the proof.

\subsubsection{Proof of Lemma \ref{lem:8}}\label{pf:lem:8}

Based on the definition of $\I_{3,t}$ in \eqref{rec:c}, we have the recursion
\begin{equation}\label{pequ:18}
\I_{3,t+1} = \cbr{I - \varphi_{t+1}(I+C^\star)}\I_{3,t} + \varphi_{t+1}\bdelta^{t+1}.
\end{equation}
By Assumption \ref{ass:1} and the fact that $\|C_t\|\leq 1$, we have
\begin{align}\label{pequ:32}
&\hskip-1cm \nbr{\bdelta^t}\;\; \stackrel{\mathclap{\eqref{rec:def:c}}}{\leq} \;\; 2\rbr{\|(K^\star)^{-1}\|\|\bpsi^t\| + \|K_t^{-1} - (K^\star)^{-1}\|\cdot \|\nabla\mL_t\|}  + \|C_t - C^\star\|\cdot\nbr{\begin{pmatrix}
\bx_t-\bx^\star\\
\blambda_t-\blambda^\star
\end{pmatrix}}  \nonumber\\
& \leq 2\Upsilon_K\Upsilon_L\nbr{\begin{pmatrix}
\bx_t-\bx^\star\\
\blambda_t-\blambda^\star
\end{pmatrix}}^2 + (2\Upsilon_K^2\Upsilon_u\|K_t - K^\star\| + \|C_t - C^\star\|) \nbr{\begin{pmatrix}
\bx_t-\bx^\star\\
\blambda_t-\blambda^\star
\end{pmatrix}} .
\end{align}
Since $K_t\rightarrow K^\star$ (cf. Lemma \ref{lem:11}) and $C_t\rightarrow C^\star$, we know
\begin{equation}\label{pequ:19}
\bdelta^t = o(\|(\bx_t - \bx^\star, \blambda_t - \blambda^\star)\|).
\end{equation}
Using $\|C^\star\|\leq \rho^\tau$, we know for any $a\in(0, 1)$, there exists an integer~$t_1$~such that~for~any~$t \geq~t_1$,
\begin{align*}
\|\I_{3,t+1}\| & \leq \cbr{1 - \varphi_{t+1}(1-\rho^\tau)}\|\I_{3,t}\| + \varphi_{t+1}\cdot o\rbr{\nbr{(\bx_{t+1} - \bx^\star,	\blambda_{t+1} - \blambda^\star)}}\\
& \leq \cbr{1 - \varphi_{t+1}(1-\rho^\tau) + o(\varphi_{t+1})}\|\I_{3,t}\| + \varphi_{t+1}\cdot o(\|\I_{1,t}\|+\|\I_{2,t}\|) \quad (\text{by Lemma \ref{lem:4}})\\
& \leq\cbr{1 - a(1-\rho^\tau)\varphi_{t+1}}\|\I_{3,t}\| + \varphi_{t+1}\cdot o(\|\I_{1,t}\|+\|\I_{2,t}\|).
\end{align*}
We apply the above inequality recursively and obtain
\begin{multline}\label{pequ:11}
\|\I_{3,t+1}\| \leq \prod_{j=t_1+1}^{t+1}\cbr{1 - a(1-\rho^\tau)\varphi_j}\|\I_{3,t_1}\|  \\
+ \sum_{i=t_1+1}^{t+1}\prod_{j=i+1}^{t+1}\cbr{1 - a(1-\rho^\tau)\varphi_j}\varphi_io(\|\I_{1,i-1}\|+\|\I_{2,i-1}\|).
\end{multline}
We apply Lemmas \ref{lem:6} and \ref{lem:7} for bounding $\|\I_{1,i-1}\|$ and $\|\I_{2,i-1}\|$. In particular, we note~that for any $\upsilon\geq 0$,
\begin{align*}
& \lim\limits_{i\rightarrow\infty}i \rbr{1 - \frac{\sqrt{\varphi_{i-1}\cbr{\log(1/\varphi_{i-1})}^{1+\upsilon} }}{\sqrt{\varphi_{i}\cbr{\log(1/\varphi_{i})}^{1+\upsilon} }}} \;\; \stackrel{\mathclap{\eqref{cond:varphi}}}{=}\;\; \lim\limits_{i\rightarrow\infty} i \rbr{1 - \frac{\sqrt{\varphi_{i-1}}}{\sqrt{\varphi_{i}}}} + \lim\limits_{i\rightarrow\infty} i \rbr{1 - \frac{\cbr{\log(1/\varphi_{i-1})}^{\frac{1+\upsilon}{2}}}{\cbr{\log(1/\varphi_{i})}^{\frac{1+\upsilon}{2}}}}\\
&\stackrel{\mathclap{\eqref{cond:varphi}}}{=} \;\; \frac{\varphi}{2} + \lim\limits_{i\rightarrow\infty} i \rbr{1 - \frac{\cbr{\log(1/\varphi_{i-1})}^{\frac{1+\upsilon}{2}}}{\cbr{\log(1/\varphi_{i})}^{\frac{1+\upsilon}{2}}}} \quad (\text{Lemma \ref{aux:lem:5}}).
\end{align*}
Furthermore, we have
\begin{align*}
\lim\limits_{i\rightarrow\infty} i\rbr{1 - \frac{\log(1/\varphi_{i-1})}{\log(1/\varphi_{i})}} & = \lim\limits_{i\rightarrow\infty}\frac{i\log(\varphi_{i-1}/\varphi_{i})}{\log(1/\varphi_{i})} = \lim\limits_{i\rightarrow\infty}\frac{i\log\rbr{1 + (\varphi_{i-1}-\varphi_{i})/\varphi_{i}}}{\log(1/\varphi_{i})}\\
& = \lim\limits_{i\rightarrow\infty}\frac{i\cbr{\frac{\varphi_{i-1} - \varphi_{i}}{\varphi_{i}} + O\rbr{\frac{(\varphi_{i-1} - \varphi_{i})^2}{\varphi_{i}^2}}} }{\log(1/\varphi_{i})} = \lim\limits_{i\rightarrow\infty}\frac{-\varphi}{\log(1/\varphi_{i})} = 0,
\end{align*}
where the last equality is due to $\varphi_i\rightarrow 0$, as implied by Lemma \ref{aux:lem:1}. Combining the above~two displays with Lemma \ref{aux:lem:5}, we have
\begin{equation}\label{pequ:12}
\lim\limits_{i\rightarrow\infty}i \rbr{1 - \frac{\sqrt{\varphi_{i-1}\cbr{\log(1/\varphi_{i-1})}^{1+\upsilon} }}{\sqrt{\varphi_{i}\cbr{\log(1/\varphi_{i})}^{1+\upsilon} }}} = \frac{\varphi}{2}\quad\quad \text{for any } \;\; \nu\geq 0.
\end{equation}
Moreover, we have for any $p, q\in(0,1]$ and $\nu\geq 0$,
\begin{equation}\label{pequ:13}
\begin{aligned}
\lim\limits_{i\rightarrow \infty}i\rbr{1 - \frac{\chi_{i-1}^p/\varphi_{i-1}^{0.5p}\sqrt{\{\log(1/\chi_{i-1})\}^{1+\nu}} }{\chi_{i}^p/\varphi_{i}^{0.5p}\sqrt{\{\log(1/\chi_{i})\}^{1+\nu}} }} &  \stackrel{\substack{\eqref{pequ:12}\\ \eqref{com:chi:phi}}}{=} p(\chi - 0.5\varphi),\\
\lim\limits_{i\rightarrow \infty}i\rbr{1 - \frac{\chi_{i-1}^q/\varphi_{i-1}^{q}}{\chi_{i}^q/\varphi_{i}^{q} }} &  \stackrel{\substack{\eqref{com:chi:phi}}}{=} q(\chi - \varphi).
\end{aligned}
\end{equation}
For the constants $p,q$ in \eqref{cond:chi}, we let $a$ be any scalar such that
\begin{equation*}
0<\frac{-\varphi/\tvarphi}{2(1-\rho^\tau)}\vee \frac{-p(\chi-0.5\varphi)/\tvarphi}{1-\rho^\tau}\vee \frac{-q(\chi-\varphi)/\tvarphi}{1-\rho^\tau}
<a<1,
\end{equation*}
which is guaranteed to exist due to \eqref{cond:varphi} and \eqref{cond:chi}. Then, we obtain
\begin{equation}\label{pequ:14}
a(1-\rho^\tau) + \frac{\varphi}{2\tvarphi} >0 \quad a(1-\rho^\tau) + \frac{p(\chi-0.5\varphi)}{\tvarphi}>0 \quad \text{and}\quad a(1-\rho^\tau) + \frac{q(\chi-\varphi)}{\tvarphi}>0.
\end{equation}
Thus, combining \eqref{pequ:11}, \eqref{pequ:12}, and \eqref{pequ:13} with Lemma \ref{aux:lem:2}, we obtain the results \mbox{under}~\mbox{either} \eqref{equ:BM:a} or \eqref{equ:BM:b}. This completes the proof.

\subsubsection{Proof of Lemma \ref{lem:11}}\label{pf:lem:11}

We note that
\begin{multline}\label{pequ:25}
\nbr{\frac{1}{t}\sum_{i=0}^{t-1}\bnabla_{\bx}^2\mL_i - \nabla_{\bx}^2\mL^\star} \leq \nbr{\frac{1}{t}\sum_{i=0}^{t-1} \barH_i - \nabla^2f_i} + \frac{1}{t}\sum_{i=0}^{t-1}\nbr{\nabla_{\bx}^2\mL_i - \nabla_{\bx}^2\mL^\star}  \\
\stackrel{\mathclap{\eqref{Lip:mL}}}{\leq} \;\; \nbr{\frac{1}{t}\sum_{i=0}^{t-1} \barH_i - \nabla^2f_i} + \frac{\Upsilon_L}{t} \sum_{i=0}^{t-1}\nbr{\begin{pmatrix}
\bx_i - \bx^\star\\
\blambda_i - \blambda^\star
\end{pmatrix}}.
\end{multline}
Since $(\bx_t-\bx^\star, \blambda_t-\blambda^\star) \rightarrow \0$, by the fact that $a_t\rightarrow a$ implies $\frac{1}{t}\sum_{i=0}^{t-1}a_i\rightarrow a$ (also known as Stolz–Ces\`aro theorem), it suffices to show $(\sum_{i=0}^{t-1} \barH_i - \nabla^2f_i)/t$ converges to~zero.~In~fact,~by~Assumption \ref{ass:2}\eqref{equ:BM:d} that $\mE[\barH_i \mid \mF_{i-1}] = \nabla^2 f_i$ and $\mE[\|\barH_i - \nabla^2f_i\|^2\mid \mF_{i-1}]\leq \Upsilon_m$, we notice $(\sum_{i=0}^{t-1} \barH_i - \nabla^2f_i)/t$ is a square integrable martingale. Thus, \cite[Theorem 1.3.15]{Duflo1997Random} suggests that for any $\upsilon>0$,
\begin{equation}\label{pequ:26}
\nbr{\frac{1}{t}\sum_{i=0}^{t-1} \barH_i - \nabla^2f_i} = o\rbr{\sqrt{\frac{(\log t)^{1+\upsilon}}{t}}}.
\end{equation}
Combining \eqref{pequ:25} and \eqref{pequ:26}, we obtain $\frac{1}{t}\sum_{i=0}^{t-1}\bnabla_{\bx}^2\mL_i \rightarrow \nabla_{\bx}^2\mL^\star$ as $t\rightarrow\infty$. For the~second~result, we suppose $\|\nabla_{\bx}^2\mL^\star\|\leq \Upsilon_B^\star$ and $\bx^T\nabla_{\bx}^2\mL^\star\bx \geq \gamma_{RH}^\star\|\bx\|^2$ in the space $\{\bx\in \mR^d: G^\star\bx = \0\}$. Whenever $\gamma_{RH}<\gamma_{RH}^\star$ and $\Upsilon_B>\Upsilon_B^\star$, we know $\|\frac{1}{t}\sum_{i=0}^{t-1}\bnabla_{\bx}^2\mL_i\|\leq \Upsilon_B$ for large enough $t$. In addition, we let $Z_t, Z^\star\in\mR^{d\times (d-m)}$ be the matrices whose columns are orthonormal and span the spaces of $\ker(G_t)$, $\ker(G^\star)$, respectively. Since $G_t\rightarrow G^\star$, Davis-Kahan $\sin(\theta)$ theorem suggests that $Z_tZ_t^T\rightarrow Z^\star(Z^\star)^T$, implying $\inf_{Q}\|Z_t - Z^\star Q\|\rightarrow 0$ with $Q$ chosen over all $(d-m)\times (d-m)$ orthogonal matrices \citep{Davis1970Rotation}. Thus, we have
\begin{equation*}
\lambda_{\min}(Z_t^T(\sum_{i=0}^{t-1}\bnabla_{\bx}^2\mL_i/t) Z_t) = \lambda_{\min}(QZ_t^T(\sum_{i=0}^{t-1}\bnabla_{\bx}^2\mL_i/t) Z_tQ^T)\rightarrow\lambda_{\min}((Z^\star)^T\nabla_{\bx}^2\mL^\star Z^\star),
\end{equation*}
which implies $\lambda_{\min}(Z_t^T(\sum_{i=0}^{t-1}\bnabla_{\bx}^2\mL_i/t) Z_t)\geq \gamma_{RH}$ for large enough $t$. This completes the~proof.

\subsection{Proof of Theorem \ref{thm:4}}\label{pf:thm:4}

We first improve the rate of $\I_{3,t}$.~Lemma \ref{lem:12} differs from Lemma \ref{lem:8} in the bound~of~$\bdelta^t$.~We~propose a more precise bound on $\bdelta^t$ compared to \eqref{pequ:19}. The new bound~relies~on~the~convergence rate of the Hessian $K_t$ in Lemma \ref{lem:9} and Assumption \ref{ass:5} for~applying~Corollary~\ref{cor:2}.

\begin{lemma}\label{lem:12}
Under the conditions of Theorem \ref{thm:4}, for any $\upsilon>0$,
\begin{equation*}
\I_{3,t} = o\big(\cbr{\varphi_t\log(1/\varphi_t)}^{2/3} \{\log(1/\varphi_t)\}^\nu\big) \quad \text{a.s.}
\end{equation*}
\end{lemma}

We note that
\begin{equation*}
|\sqrt{1/\baralpha_t} - \sqrt{1/\varphi_t}| = |\sqrt{\baralpha_t} - \sqrt{\varphi_t}|/\sqrt{\baralpha_t\varphi_t} = \abr{\baralpha_t - \varphi_t}/(\sqrt{\baralpha_t\varphi_t}(\sqrt{\baralpha_t}+\sqrt{\varphi_t}))\leq \chi_t/(4\beta_t^{1.5}).
\end{equation*}
Since $\chi<1.5\beta$, we know $\chi_t=o(\beta_t^{1.5})$.~By the almost sure convergence of $(\bx_t, \blambda_t)$,~we~only~need to show the normality of $1/\sqrt{\varphi_t}(\bx_t-\tx, \blambda_t-\tlambda)$. Let us choose $p\in(0, 1]$ such that
\begin{equation}\label{nequ:p}
\begin{aligned}
p\chi - 0.5p\varphi-2\varphi/3 <0 & \Longleftrightarrow p>\frac{2\varphi/3}{\chi-0.5\varphi}, \\
(1-\rho^\tau) + p(\chi-0.5\varphi)/\tvarphi > 0 & \Longleftrightarrow p<\frac{(1-\rho^\tau)\tvarphi}{0.5\varphi-\chi},
\end{aligned}
\end{equation}
which is guaranteed to exist due to the fact that
\begin{equation*}
0<\frac{2\varphi/3}{\chi - 0.5\varphi} < 1\wedge \frac{(1-\rho^\tau)\tvarphi}{0.5\varphi - \chi}.
\end{equation*}
We also choose $q$ as stated in the theorem, which guarantees that (by the proof of Theorem~\ref{thm:3}, $\varphi = \beta$ and $\tvarphi = \tbeta$)  
\begin{equation}\label{nequ:q}
q\chi - q\varphi - 0.5\varphi <0 \Leftrightarrow q>\frac{0.5\varphi}{\chi-\varphi} \quad\text{and}\quad (1-\rho^\tau) +q(\chi-\varphi)/\tvarphi >0\Leftrightarrow q<\frac{(1-\rho^\tau)\tvarphi}{\varphi-\chi}.
\end{equation}
With the above choices of $p$ and $q$, we know from \eqref{pequ:13}, Lemmas \ref{lem:7} and \ref{lem:12} that~for~any~\mbox{$\nu>0$}
\begin{equation*}
\I_{2,t} + \I_{3,t} = O(\chi_t^q/\varphi_t^q) + o\big(\cbr{\varphi_t\log(1/\varphi_t)}^{2/3} \cbr{\log(1/\varphi_t)}^\nu\big).
\end{equation*}
The first $O(\cdot)$ can be strengthened to $o(\cdot)$ when $q<1$.~Noting that \mbox{$1/\sqrt{\varphi_t}(\I_{2,t}+\I_{3,t}) = o(1)$}~a.s., the Slutsky's theorem together with Lemma \ref{lem:6} leads to the asymptotic normality. Furthermore, Lemma \ref{aux:lem:4}~with
\begin{equation*}
A_t = \frac{\sqrt{1/\varphi_t}\cdot \bw^T\I_{1,t}}{\sqrt{\bw^T\Xi^\star\bw}}, \;\; B_t = \frac{\sqrt{1/\varphi_t}\cdot\bw^T(\I_{2,t} +\I_{3,t})}{\sqrt{\bw^T\Xi^\star\bw}} + \frac{(\sqrt{1/\baralpha_t} - \sqrt{1/\varphi_t})\cdot\bw^T(\bx_t-\tx,\blambda_t-\tlambda)}{\sqrt{\bw^T\Xi^\star\bw}}
\end{equation*}
and $C_t = 0$ leads to the Berry-Esseen bound. This completes the proof.

\subsubsection{Proof of Lemma \ref{lem:12}}

We need the following lemma to establish the convergence rate of $K_t$. The conditions are the same as those for showing the convergence rate of $(\bx_t-\tx, \blambda_t-\tlambda)$, which are weaker than Theorem \ref{thm:4}. The proof is provided in Appendix \ref{pf:lem:9}.

\begin{lemma}\label{lem:9}
Under Assumptions \ref{ass:1}, \ref{ass:2}(\ref{equ:BM:a}, \ref{equ:BM:e}), \ref{ass:3} and suppose $\{\beta_t, \chi_t\}_t$ satisfy (\ref{cond:n:1}). Then, for any $\nu>0$ and any constants $p, q\in(0, 1]$ such that $(1-\rho^\tau) + p(\chi-0.5\beta)/\tbeta>0$~and $(1-\rho^\tau) + q(\chi-\beta)/\tbeta>0$, we have
\begin{equation*}
\|K_t-K^\star\| = o\big(\sqrt{\beta_t\{\log(1/\beta_t)\}^{1+\upsilon}}\; \big) + o\big(\chi_t^{p}/\beta_t^{0.5p}\sqrt{\{\log(1/\chi_t)\}^{1+\nu}}\;\big) + O(\chi_t^q/\beta_t^q) \;\; \text{a.s.}
\end{equation*}
Furthermore, if (\ref{equ:BM:a}) is strengthened to (\ref{equ:BM:b}), then
\begin{equation*}
\|K_t-K^\star\| = O\big(\sqrt{\beta_t\log(1/\beta_t)}\; \big) + O\big(\chi_t^{p}/\beta_t^{0.5p}\sqrt{\log(1/\chi_t)}\;\big) + O(\chi_t^q/\beta_t^q) + o(\sqrt{(\log t)^{1+\upsilon}/t}\;) \; \text{a.s.}
\end{equation*}
If $p<1$ (and/or $q<1$), the second (and/or third) $O(\cdot)$ in the above results can be strengthened to $o(\cdot)$.

\end{lemma}

Applying Lemma \ref{lem:9} with $p, q$ chosen to satisfy \eqref{nequ:p} and \eqref{nequ:q}, we know for any~$\nu>0$
\begin{equation*}
\|K_t-K^\star\| = O\big(\sqrt{\varphi_t\log(1/\varphi_t)}\; \big) + o(\sqrt{(\log t)^{1+\upsilon}/t}\;).
\end{equation*}
Combining the above result with \eqref{pequ:32}, Lemmas \ref{lem:4}, \ref{lem:6}, \ref{lem:7}, \ref{lem:8}, and Corollary \ref{cor:2},~we~have 
\begin{equation}\label{nequ:7}
\|\bdelta^t\| = O(\varphi_t\log(1/\varphi_t)) + o( \sqrt{\varphi_t\log(1/\varphi_t)} \cdot \sqrt{(\log t)^{1+\upsilon}/t }\;).
\end{equation}
We plug the above bound into the recursion \eqref{pequ:18}. We note that 
\begin{align*}
& \lim_{t\rightarrow\infty} t\rbr{1 - \frac{\varphi_{t-1}\log(1/\varphi_{t-1})}{\varphi_t\log(1/\varphi_t)}} \stackrel{\eqref{pequ:12}}{=} \varphi,\\
& \lim_{t\rightarrow\infty} t\rbr{1 - \frac{\sqrt{\varphi_{t-1}\log(1/\varphi_{t-1})}\sqrt{(\log(t-1))^{1+\upsilon}/(t-1) } }{\sqrt{\varphi_t\log(1/\varphi_t)} \sqrt{(\log t)^{1+\upsilon}/t} } } = \frac{\varphi}{2}-\frac{1}{2}.
\end{align*}
In the following proof, we consider $\varphi$ such that $0.5\varphi - 0.5\geq \varphi$. Otherwise, the second term~in \eqref{nequ:7} is absorbed into the first term. Applying Lemma \ref{aux:lem:2} and noting that
\begin{equation*}
1.5(1-\rho^\tau) + (0.5\varphi - 0.5)/\tvarphi \geq 1.5(1-\rho^\tau) + \varphi/\tvarphi>0,
\end{equation*}
we know
\begin{align*}
\I_{3,t} & = o\big(\cbr{\varphi_t\log(1/\varphi_t)}^{2/3}\big) + o\big(\{\sqrt{\varphi_t\log(1/\varphi_t)} \cdot \sqrt{(\log t)^{1+\upsilon}/t }\}^{2/3}\big)\\
& = o\big(\cbr{\varphi_t\log(1/\varphi_t)}^{2/3} \{\log(1/\varphi_t)\}^\nu\big),
\end{align*}
where the second equality is due to $\sqrt{(\log t)^{1+\upsilon}/t } = O\big(\sqrt{\varphi_t\{\log(1/\varphi_t)\}^{1+\nu}}\big)$ as implied~by~the fact that~$t\varphi_t\rightarrow \tvarphi\in(0, \infty]$. This completes the proof.

\subsubsection{Proof of Lemma \ref{lem:9}}\label{pf:lem:9}

We note from the proof of Theorem \ref{thm:3} that $\varphi = \beta$ and $\tvarphi = \tbeta$. By Lemmas \ref{lem:4}, \ref{lem:6}, \ref{lem:7},~\ref{lem:8}, for any $\nu>0$, if \eqref{equ:BM:a} holds, then we have {\small
\begin{equation}\label{nequ:3}
\|(\bx_t-\tx, \blambda_t-\tlambda)\| = o\big(\sqrt{\varphi_t\{\log(1/\varphi_t)\}^{1+\nu}}\big) + o\big(\chi_t^{p}/\varphi_t^{0.5p}\sqrt{\{\log(1/\chi_t)\}^{1+\nu}}\;\big) + O(\chi_t^q/\varphi_t^q).
\end{equation}}
\hskip-3pt Here, $p, q\in(0, 1]$ are any constants such that $(1-\rho^\tau) +p(\chi-0.5\varphi)/\tvarphi>0$ and $(1-\rho^\tau) + q(\chi-\varphi)/\tvarphi>0$. Furthermore, if $q<1$ the $O(\cdot)$ in the third term~can be strengthened to~$o(\cdot)$. In the following proof, we only consider $p, q$ such that
\begin{equation}\label{nequ:5}
p(\chi-0.5\varphi) \geq 0.5\varphi\quad \quad\text{ and }\quad\quad q(\chi-\varphi) \geq 0.5\varphi.
\end{equation}
Otherwise, by \eqref{pequ:12}, \eqref{pequ:13}, Lemmas \ref{aux:lem:5} and \ref{aux:lem:1}, we know the second (and/or third)~term~in \eqref{nequ:3} can be absorbed into the first term. We now apply \eqref{Lip:mL}, combine \eqref{pequ:25} and~\eqref{pequ:26},~and have for any $\upsilon>0$ and large enough $t$ that {\small
\begin{align}\label{pequ:15}
& \|K_t-K^\star\| \leq \frac{\Upsilon_L}{t}\sum_{i=0}^{t-1}\nbr{\begin{pmatrix}
\bx_i - \bx^\star\\
\blambda_i - \blambda^\star
\end{pmatrix}} + \Upsilon_L\|\bx_t-\bx^\star\| + o\rbr{\sqrt{\frac{(\log t)^{1+\upsilon}}{t}}} \nonumber\\
& = \frac{\Upsilon_L}{t}\nbr{\begin{pmatrix}
\bx_0 - \bx^\star\\
\blambda_0 - \blambda^\star
\end{pmatrix}} + \Upsilon_L\sum_{i=1}^{t-1}\prod_{j=i+1}^{t}\rbr{1 - \frac{1}{j}}\frac{1}{i}\nbr{\begin{pmatrix}
\bx_i - \bx^\star\\
\blambda_i - \blambda^\star
\end{pmatrix}} + \Upsilon_L\|\bx_t-\bx^\star\| + o\rbr{\sqrt{\frac{(\log t)^{1+\upsilon}}{t}}} \nonumber\\
& = \Upsilon_L\sum_{i=1}^{t-1}\prod_{j=i+1}^{t}\rbr{1 - \frac{1}{j}}\frac{1}{i}\nbr{\begin{pmatrix}
\bx_i - \bx^\star\\
\blambda_i - \blambda^\star
\end{pmatrix}} + \Upsilon_L \|\bx_t-\bx^\star\|  + o\rbr{\sqrt{\frac{(\log t)^{1+\upsilon}}{t}}}.
\end{align}}
\hskip-3pt We claim that $\varphi =\beta > -2$. Otherwise, $\varphi +1.5 \leq -0.5<0$. We apply Lemma \ref{aux:lem:5} and have
\begin{equation*}
\lim_{t\rightarrow\infty}t\rbr{1 - \frac{\varphi_{t-1}(t-1)^{1.5}}{\varphi_t t^{1.5}}} = \lim_{t\rightarrow\infty} t\rbr{1 - \frac{\varphi_{t-1}}{\varphi_t} + \frac{\varphi_{t-1}}{\varphi_t} \rbr{1 - \frac{(t-1)^{1.5}}{t^{1.5}}}} \stackrel{\eqref{cond:varphi}}{=}\varphi + 1.5<0.
\end{equation*}
Then, Lemma \ref{aux:lem:1} suggests $\varphi_t t^{1.5}\rightarrow 0$, which cannot hold under \eqref{cond:n:1}. Thus, $\varphi>-2$. Using \eqref{pequ:12}, \eqref{pequ:13}, \eqref{nequ:5}, and Lemma \ref{aux:lem:2}, and noting that \mbox{$1 + p(\chi-0.5\varphi) \geq 1+0.5\varphi>0$}~and~$1 + q(\chi-\varphi)\geq 1+0.5\varphi>0$, we obtain
\begin{multline}\label{pequ:16}
\sum_{i=1}^{t-1}\prod_{j=i+1}^{t}\rbr{1 - \frac{1}{j}}\frac{1}{i}\nbr{\begin{pmatrix}
\bx_i - \bx^\star\\
\blambda_i - \blambda^\star
\end{pmatrix}}  \\
= o\big(\sqrt{\varphi_t\{\log(1/\varphi_t)\}^{1+\nu}}\big) + o\big(\chi_t^{p}/\varphi_t^{0.5p}\sqrt{\{\log(1/\chi_t)\}^{1+\nu}}\;\big) + O(\chi_t^q/\varphi_t^q).
\end{multline}
Following the same derivation, we know that if \eqref{equ:BM:b} holds, then
\begin{equation}\label{nequ:4}
\|(\bx_t-\tx, \blambda_t-\tlambda)\| = O\big(\sqrt{\varphi_t \log(1/\varphi_t)}\big) + O\big(\chi_t^{p}/\varphi_t^{0.5p}\sqrt{\log(1/\chi_t)}\;\big) + O(\chi_t^q/\varphi_t^q)
\end{equation}
and
\begin{multline}\label{nequ:6}
\sum_{i=1}^{t-1}\prod_{j=i+1}^{t}\rbr{1 - \frac{1}{j}}\frac{1}{i}\nbr{\begin{pmatrix}
\bx_i - \bx^\star\\
\blambda_i - \blambda^\star
\end{pmatrix}}  \\
= O\big(\sqrt{\varphi_t \log(1/\varphi_t)}\big) + O\big(\chi_t^{p}/\varphi_t^{0.5p}\sqrt{\log(1/\chi_t)}\;\big) + O(\chi_t^q/\varphi_t^q).
\end{multline}
Combining \eqref{pequ:15}, \eqref{pequ:16}, \eqref{nequ:6} together, and noting that $\beta_t\leq \varphi_t\leq 2\beta_t$ and $o(\sqrt{(\log t)^{1+\upsilon}/t })$ can be absorbed into $ o\big(\sqrt{\varphi_t\{\log(1/\varphi_t)\}^{1+\nu}}\big)$ under \eqref{equ:BM:a} (as~implied by the fact that~$t\varphi_t\rightarrow \tvarphi\in(0, \infty]$), we complete the proof.

\subsection{Proof of Corollary \ref{cor:3}}

The result in (a) is immediate by plugging $\beta = -1$ and $\tbeta = 1$ into \eqref{equ:cov:exact}. For (b),~we~plug~$\beta = -1$ and $\tbeta = 1$ into \eqref{equ:Lya:equ}, and know that $\tXi$ solves the equation
\begin{equation*}
(0.5I + C^\star)\tXi + \tXi(0.5I + C^\star) = \mE[(I+\tC^\star)\tOmega(I+\tC^\star)^T].
\end{equation*}
Thus, we have
\begin{multline*}
(0.5I + C^\star)(\tXi - \tOmega) + (\tXi - \tOmega)(0.5I + C^\star) \\
= \mE[(I+\tC^\star)\tOmega(I+\tC^\star)^T] - \tOmega - C^\star\tOmega - \tOmega C^\star = \mE[\tC^\star\tOmega(\tC^\star)^T]\succeq \0.
\end{multline*}
Since $\|C^\star\|\leq \rho^\tau<0.5$, the basic Lyapunov theorem (cf. \cite[Theorem 4.6]{Khalil2002Khalil})~suggests that $\tXi \succeq \tOmega$. Furthermore, with the notation in \eqref{equ:Xi:n}, we know
\begin{equation}\label{nequ:8}
\tXi - \tOmega = U\big(\Theta\circ U^T\mE[\tC^\star\tOmega(\tC^\star)^T]\; U \big) U^T \quad \text{ with }\quad  [\Theta]_{k,l} = 1/(\sigma_k+\sigma_l-1).
\end{equation}
The matrix $\Theta$ is positive semidefinite since for any vector $\bomega$,
\begin{align*}
\bomega^T\Theta\bomega & = \sum_{k,l=1}^{d+m}\frac{\bomega_k\bomega_l}{\sigma_k+\sigma_l-1} = \sum_{k,l=1}^{d+m} \bomega_k\bomega_l \int_0^\infty \exp(-s(\sigma_k+\sigma_l-1))ds \quad (\text{since } \sigma_k+\sigma_l-1>0)\\
& = \int_{0}^{\infty}\rbr{ \sum_{k=1}^{d+m}\bomega_k\exp(-s(\sigma_k-0.5))}^2 ds\geq 0.
\end{align*}
By \cite[7.5.P24]{Horn1985Matrix}, we have
\begin{multline*}
\|\tXi - \tOmega\| \stackrel{\eqref{nequ:8}}{=} \|\Theta\circ U^T\mE[\tC^\star\tOmega(\tC^\star)^T]\; U\| \\
\leq \max_{k}[\Theta]_{k,k}\|\mE[\tC^\star\tOmega(\tC^\star)^T]\| \leq \frac{1}{1-2\rho^\tau}\|\mE[\tC^\star\tOmega(\tC^\star)^T]\| \leq 3\|\mE[\tC^\star\tOmega(\tC^\star)^T]\|,
\end{multline*}
where the second last inequality is due to $\sigma_k\geq1-\rho^\tau$ and the last inequality is due to~$\rho^\tau<1/3$. Furthermore, we have
\begin{align}\label{nequ:9}
\0 & \preceq \mE[\tC^\star\tOmega(\tC^\star)^T]\preceq \|\tOmega\|\cdot\mE[\tC^\star(\tC^\star)^T] \nonumber\\
& = \|\tOmega\|\cdot\mE\sbr{\cbr{\prod_{j=1}^\tau(I - K^\star S_j(S_j^T(K^\star)S)^\dagger S_j^TK^\star)} \cbr{\prod_{j=1}^\tau(I - K^\star S_j(S_j^T(K^\star)S)^\dagger S_j^TK^\star)}^T } \nonumber\\
& =   \|\tOmega\|\cdot\mE\bigg[\cbr{\prod_{j=2}^\tau(I - K^\star S_j(S_j^T(K^\star)S_j)^\dagger S_j^TK^\star)}\mE[(I - K^\star S_1(S_1^T(K^\star)S_1)^\dagger S_j^TK^\star)\mid S_{2:\tau}] \nonumber\\ 
&\quad\quad \cbr{\prod_{j=2}^\tau(I - K^\star S_j(S_j^T(K^\star)S_j)^\dagger S_j^TK^\star)}^T\bigg] \nonumber\\
& \preceq \rho\|\tOmega\|\cdot\mE\sbr{\cbr{\prod_{j=2}^\tau(I - K^\star S_j(S_j^T(K^\star)S)^\dagger S_j^TK^\star)} \cbr{\prod_{j=2}^\tau(I - K^\star S_j(S_j^T(K^\star)S)^\dagger S_j^TK^\star)}^T } \nonumber\\
& \preceq \rho^\tau\|\tOmega\|\cdot I,
\end{align}
where the second last inequality is from Assumption \ref{ass:3} and $K_t\rightarrow K^\star$; and the last inequality applies the same reason for sketch matrices $S_{2:\tau}$. Combining the above two displays completes the proof.

\subsection{Proof of Theorem \ref{thm:5}}\label{pf:thm:5}

We have
\begin{multline}\label{pequ:20}
\nbr{\Xi_t - \Xi^\star} \leq \|\Xi^\star - \mE[(I+\tC^\star)\tOmega(I+\tC^\star)^T]/(2+\beta/\tbeta)\| \\
+ \|\mE[(I+\tC^\star)\tOmega(I+\tC^\star)^T] - \tOmega\|/(2+\beta/\tbeta) + \nbr{\tOmega - \Omega_t}/(2+\beta/\tbeta).
\end{multline}
For the first term in \eqref{pequ:20}, we have \citep[7.7.P27]{Horn1985Matrix}
\begin{align*}
\|\Xi^\star & - \mE[(I+\tC^\star)\tOmega(I+\tC^\star)^T]/(2+\beta/\tbeta)\| \\
& \; \stackrel{\mathclap{\eqref{equ:Xi:n}}}{=} \; \|(\Theta - \b1\b1^T/(2+\beta/\tbeta))\circ U^T\mE[(I+\tC^\star)\tOmega(I+\tC^\star)^T]U \| \nonumber\\
& \leq \|\Theta - \b1\b1^T/(2+\beta/\tbeta)\| \cdot \|\mE[(I+\tC^\star)\tOmega(I+\tC^\star)^T]\|\quad (\|A\circ B\|\leq \|A\|\cdot \|B\|) \nonumber\\
& \leq 4\|\Theta - \b1\b1^T/(2+\beta/\tbeta)\|\cdot \|\tOmega\| \quad (\|\tC^\star\|\leq 1),
\end{align*}
and for any $1\leq k, l\leq d+m$,
\begin{multline*}
|\Theta_{k,l} - 1/(2+\beta/\tbeta)| = |1/(\sigma_k+\sigma_l+\beta/\tbeta) - 1/(2+\beta/\tbeta)| = \frac{\abr{2 - \sigma_k-\sigma_l}}{(\sigma_k+\sigma_l+\beta/\tbeta)(2+\beta/\tbeta)} \\ 
\leq \frac{2\rho^\tau}{(2-2\rho^\tau + \beta/\tbeta)(2+\beta/\tbeta)} \stackrel{\mathclap{\eqref{cond:n:1}}}{\leq} \frac{2\rho^\tau}{(2- 2(1+\beta/(1.5\tbeta)) + \beta/\tbeta)(2+\beta/\tbeta)} = \frac{6\rho^\tau}{-\beta/\tbeta(2+\beta/\tbeta)}.
\end{multline*}
Therefore, the above two displays lead to
\begin{equation}\label{pequ:21}
\|\Xi^\star - \mE[(I+\tC^\star)\tOmega(I+\tC^\star)^T]/(2+\beta/\tbeta)\| = O(\rho^\tau).
\end{equation}
For the second term in \eqref{pequ:20}, we have
\begin{align}\label{pequ:24}
\|\mE[(I+\tC^\star)\tOmega(I+\tC^\star)^T] - \tOmega\| & \leq \|C^\star\tOmega\| + \|\tOmega C^\star\| + \|\mE[\tC^\star\tOmega(\tC^\star)^T]\| \quad (\mE[\tC^\star] = C^\star) \nonumber\\
& \leq 2\|\tOmega\|\rho^\tau + \|\mE[\tC^\star\tOmega(\tC^\star)^T]\|  \stackrel{\eqref{nequ:9}}{=} O(\rho^\tau).
\end{align}
For the third term in \eqref{pequ:20}, we have {\small
\begin{multline*}
\|\Omega_t - \tOmega\| 
\stackrel{\eqref{equ:Omega}}{=} O(\|K_t - K^\star\|) \\+  O\rbr{\nbr{\frac{1}{t}\sum_{i=0}^{t-1}\barg_i\barg_i^T - \rbr{\frac{1}{t}\sum_{i=0}^{t-1}\barg_i}\rbr{\frac{1}{t}\sum_{i=0}^{t-1}\barg_i}^T - \mE[\nabla f(\bx^\star;\xi)\nabla^Tf(\bx^\star; \xi)] + \nabla f(\bx^\star)\nabla^T f(\bx^\star) }}.
\end{multline*}}
\hskip-3pt Furthermore, we have
\begin{align*}
& \nbr{\frac{1}{t}\sum_{i=0}^{t-1}\barg_i\barg_i^T - \rbr{\frac{1}{t}\sum_{i=0}^{t-1}\barg_i}\rbr{\frac{1}{t}\sum_{i=0}^{t-1}\barg_i}^T - \mE[\nabla f(\bx^\star;\xi)\nabla^Tf(\bx^\star; \xi)] + \nabla f(\bx^\star)\nabla^T f(\bx^\star) }\\
& \leq \nbr{\frac{1}{t}\sum_{i=0}^{t-1}\barg_i\barg_i^T - \mE[\nabla f(\bx^\star;\xi)\nabla^Tf(\bx^\star; \xi)]} + \nbr{\rbr{\frac{1}{t}\sum_{i=0}^{t-1}\barg_i}\rbr{\frac{1}{t}\sum_{i=0}^{t-1}\barg_i}^T - \nabla f(\bx^\star)\nabla^T f(\bx^\star)}.
\end{align*}
We take the first term as an example, while the second term has the same guarantee~following the same derivations. We note that
\begin{multline*}
\nbr{\frac{1}{t}\sum_{i=0}^{t-1}\barg_i\barg_i^T - \mE[\nabla f(\bx^\star;\xi)\nabla^Tf(\bx^\star; \xi)]} \leq \nbr{\frac{1}{t}\sum_{i=0}^{t-1}(\barg_i\barg_i^T) - \mE[\barg_i\barg_i^T\mid \mF_{i-1}]} \\
+\nbr{\frac{1}{t}\sum_{i=0}^{t-1}\mE[\barg_i\barg_i^T\mid \mF_{i-1}]- \mE[\nabla f(\bx^\star;\xi)\nabla^Tf(\bx^\star; \xi)]}.
\end{multline*}
By \eqref{equ:BM:c}, we know the first term on the right hand side is a square integrable martingale.~The strong law of large number \cite[Theorem 1.3.15]{Duflo1997Random} suggests that for any $\nu>0$
\begin{equation*}
\nbr{\frac{1}{t}\sum_{i=0}^{t-1}(\barg_i\barg_i^T) - \mE[\barg_i\barg_i^T\mid \mF_{i-1}]} = o\big(\sqrt{(\log t)^{1+\upsilon}/t} \big).
\end{equation*}
By \eqref{Lip:map}, \eqref{nequ:6}, and the choices of $p, q$ in \eqref{nequ:p} and \eqref{nequ:q}, the second term on the~right~hand side can be bounded~by
\begin{equation}\label{pequ:35}
\nbr{\frac{1}{t}\sum_{i=0}^{t-1}\mE[\barg_i\barg_i^T\mid \mF_{i-1}]- \mE[\nabla f(\bx^\star;\xi)\nabla^Tf(\bx^\star; \xi)]} = O\rbr{\sqrt{\varphi_t\log(1/\varphi_t)}}.
\end{equation}
Combining the above five displays with Lemma \ref{lem:9}, we have
\begin{equation}\label{pequ:34}
\|\Omega_t - \tOmega\|= O\rbr{\sqrt{\varphi_t\log(1/\varphi_t)}} + o\big(\sqrt{ (\log t)^{1+\upsilon}/t} \big) .
\end{equation}
Combining \eqref{pequ:20}, \eqref{pequ:21}, \eqref{pequ:24}, and \eqref{pequ:34}, we complete the first part of the proof. For~the Berry-Esseen inequality, we simply note that
\begin{equation*}
\frac{\bw^T(\bx_t - \bx^\star,\blambda_t-\blambda^\star)}{\sqrt{\bw^T\Xi_t\bw}} = \frac{\bw^T(\bx_t - \bx^\star,\blambda_t-\blambda^\star)}{\sqrt{\bw^T\Xi^\star\bw} \cdot \sqrt{1 + \frac{\bw^T\Xi_t\bw - \bw^T\Xi^\star\bw}{\bw^T\Xi^\star\bw}}}.
\end{equation*}
Thus, we apply Theorem \ref{thm:4} and Lemma \ref{aux:lem:4}, and complete the proof.

\section{Additional Experimental Results}\label{sec:exp:more}

In this section, we provide more implementation details and show additional results. We follow the introduction in Section \ref{sec:5}, and implement the method on eight problems in~CUTEst test set and on linearly/nonlinearly constrained regression problems. For both implementation,~we~run $10^5$ iterations, set $\beta_t =1/t^{0.501}$, $\chi_t=\beta_t^2$, and $\baralpha_t\sim\text{Uniform}([\beta_t, \eta_t])$ with~\mbox{$\eta_t=\beta_t+\chi_t$}.~Regarding the Hessian regularization $\Delta_t$, we let ($\lambda_{\min}(\cdot)$ denotes the least eigenvalue)
\begin{equation*}
\Delta_t \coloneqq (-\lambda_{\min}(Z_t^T\sum_{i=0}^{t-1}\bnabla_{\bx}^2\mL_i Z_t)/t + 0.1)\cdot I\quad \text{whenever}\quad \lambda_{\min}(Z_t^T\sum_{i=0}^{t-1}\bnabla_{\bx}^2\mL_i Z_t)<0.
\end{equation*}
Here, $Z_t\in \mR^{d\times (d-m)}$ has orthonormal columns that span the space $\{\bx\in \mR^d: G_t\bx = \0\}$,~which is obtained from the QR decomposition.

\subsection{CUTEst problems}\label{sec:exp:more:1}

In this section, we testify the convergence rate in Theorem \ref{thm:3}. In particular, we randomly~pick one run across 200 runs, and show the~convergence plots of the KKT residual $\|\nabla\mL_t\|$, the~mean absolute error $\|(\bx_t-\bx^\star, \blambda_t-\blambda^\star)\|$, and~the Hessian error $\|K_t-K^\star\|$. By Theorem \ref{thm:3},~Lemma \ref{lem:9}, and the Lipschitz continuity of~the~Hessian, the theoretical convergence rate for these three quantities is $O(\sqrt{\beta_t\log(1/\beta_t)})$.

The convergence plots are shown in Figures \ref{fig:1} and \ref{fig:2}. We use six problems for illustration.
From the figures, we observe that~the method converges faster for small sampling variance~$\sigma^2$ and converges slower for large $\sigma^2$. Specifically, our theoretical convergence rate~precisely~characterizes asymptotic behavior of the method, and $\sigma^2$ only affects the~rate~as a constant~factor.\;

\begin{figure}[!htp]
\centering     
\subfigure[KKT residual]{\label{C11}\includegraphics[width=0.32\textwidth]{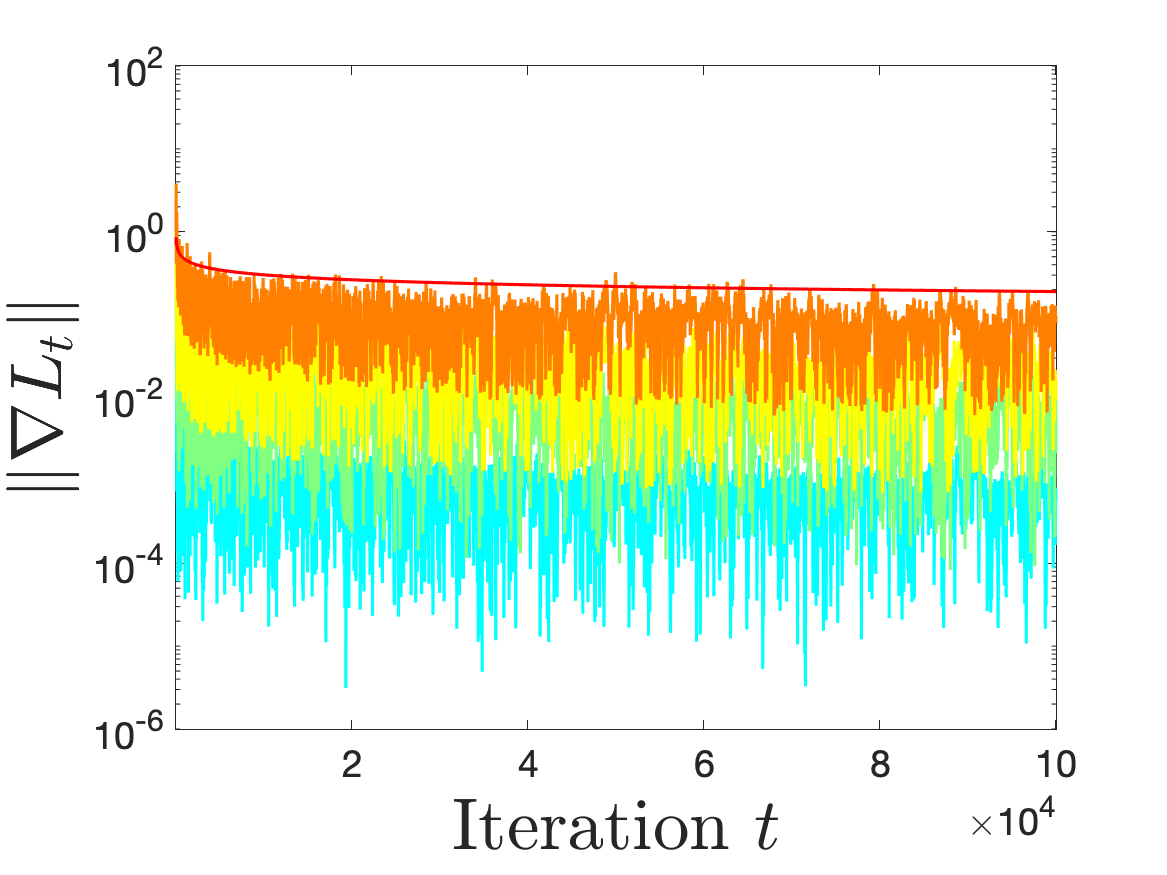}}
\subfigure[Iteration error]{\label{C12}\includegraphics[width=0.32\textwidth]{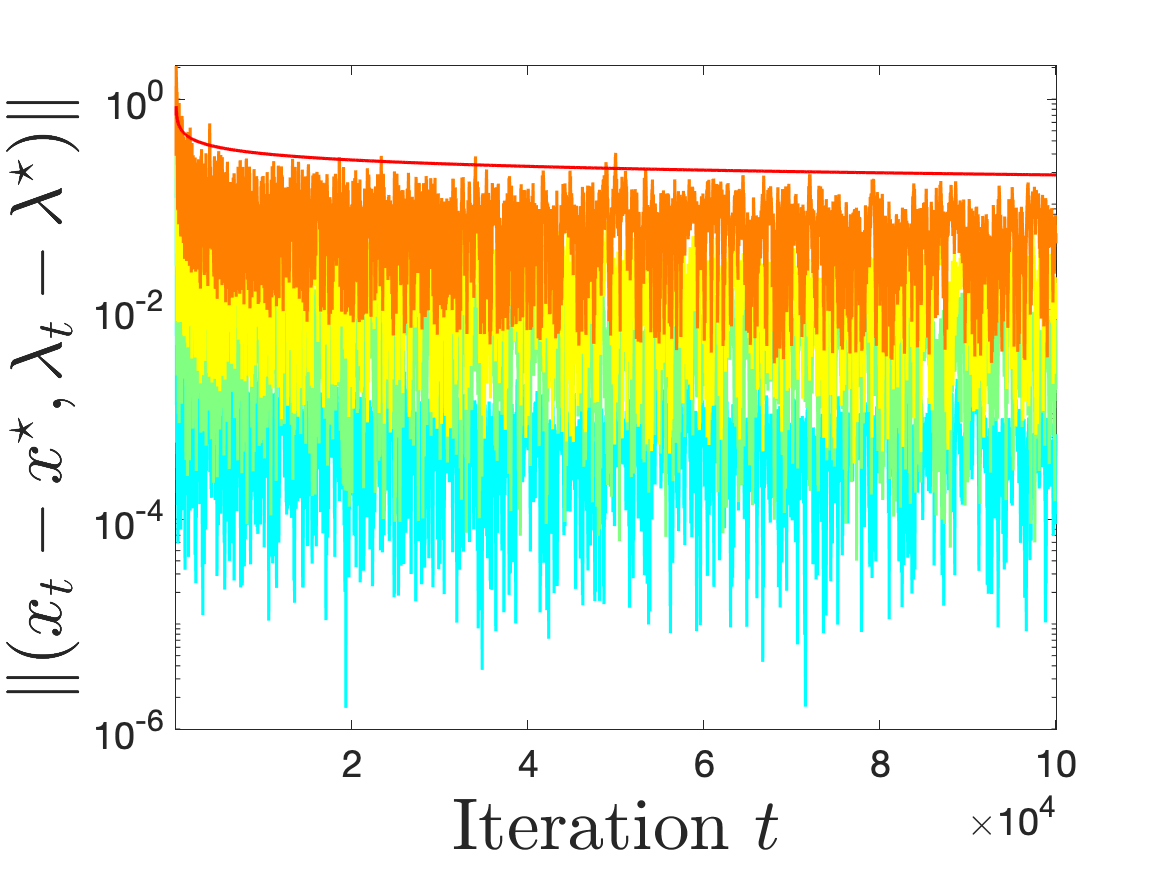}}
\subfigure[Hessian error]{\label{C13}\includegraphics[width=0.32\textwidth]{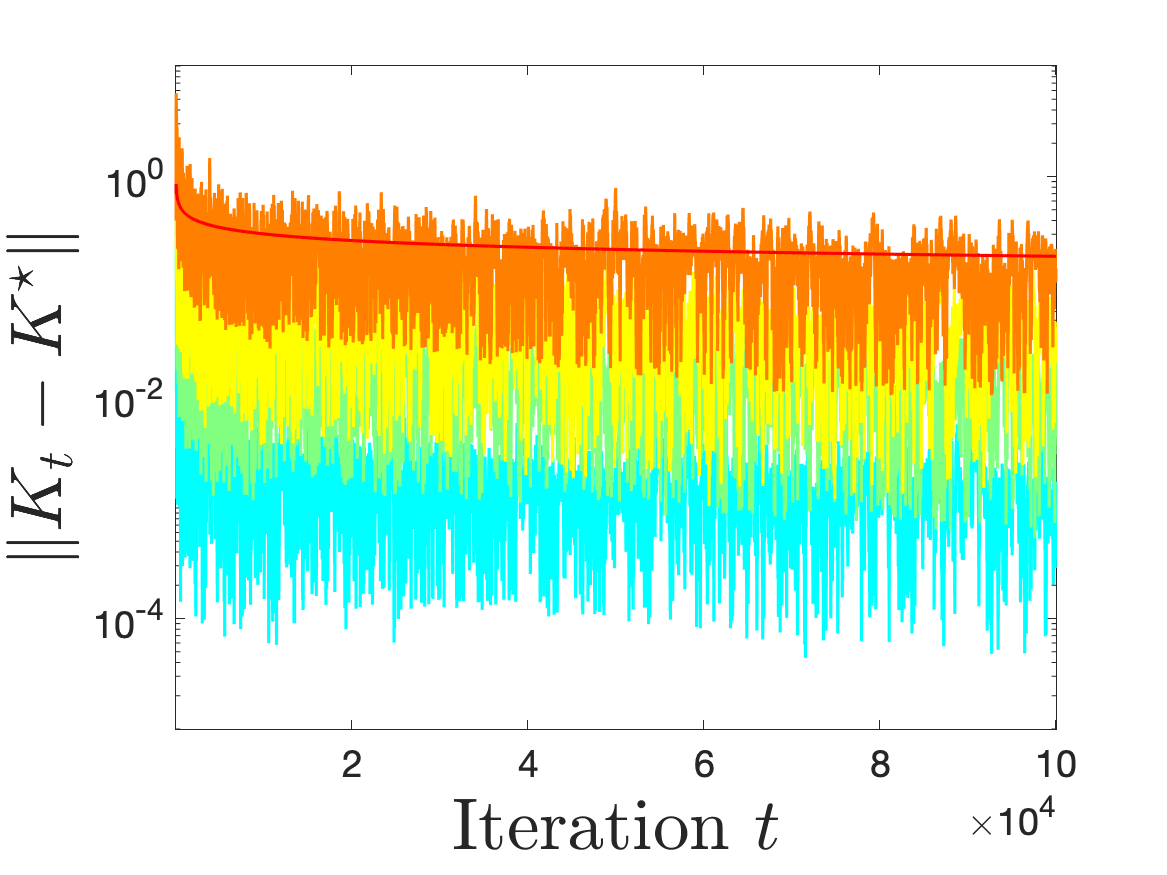}}
\vskip5pt
\centering{Problem \texttt{MARATOS}}
	
\subfigure[KKT residual]{\label{C21}\includegraphics[width=0.32\textwidth]{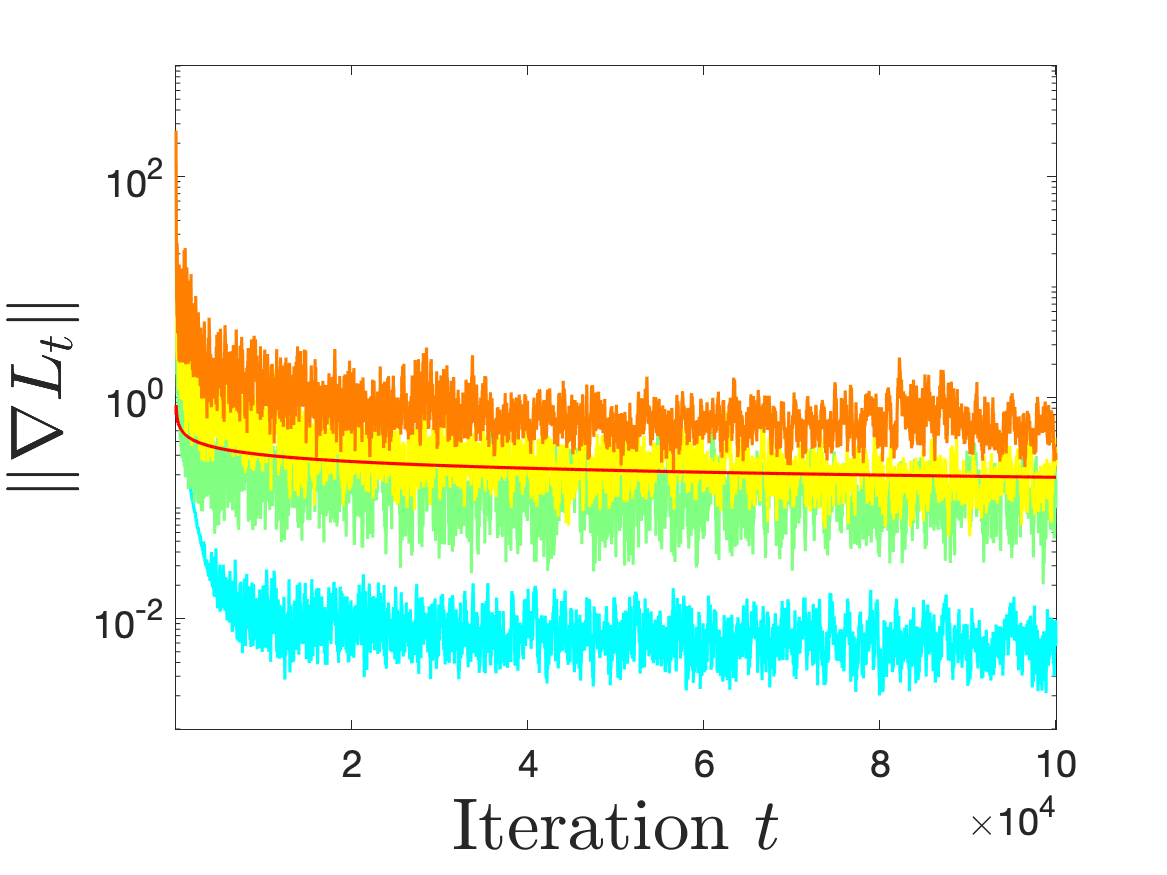}}
\subfigure[Iteration error]{\label{C22}\includegraphics[width=0.32\textwidth]{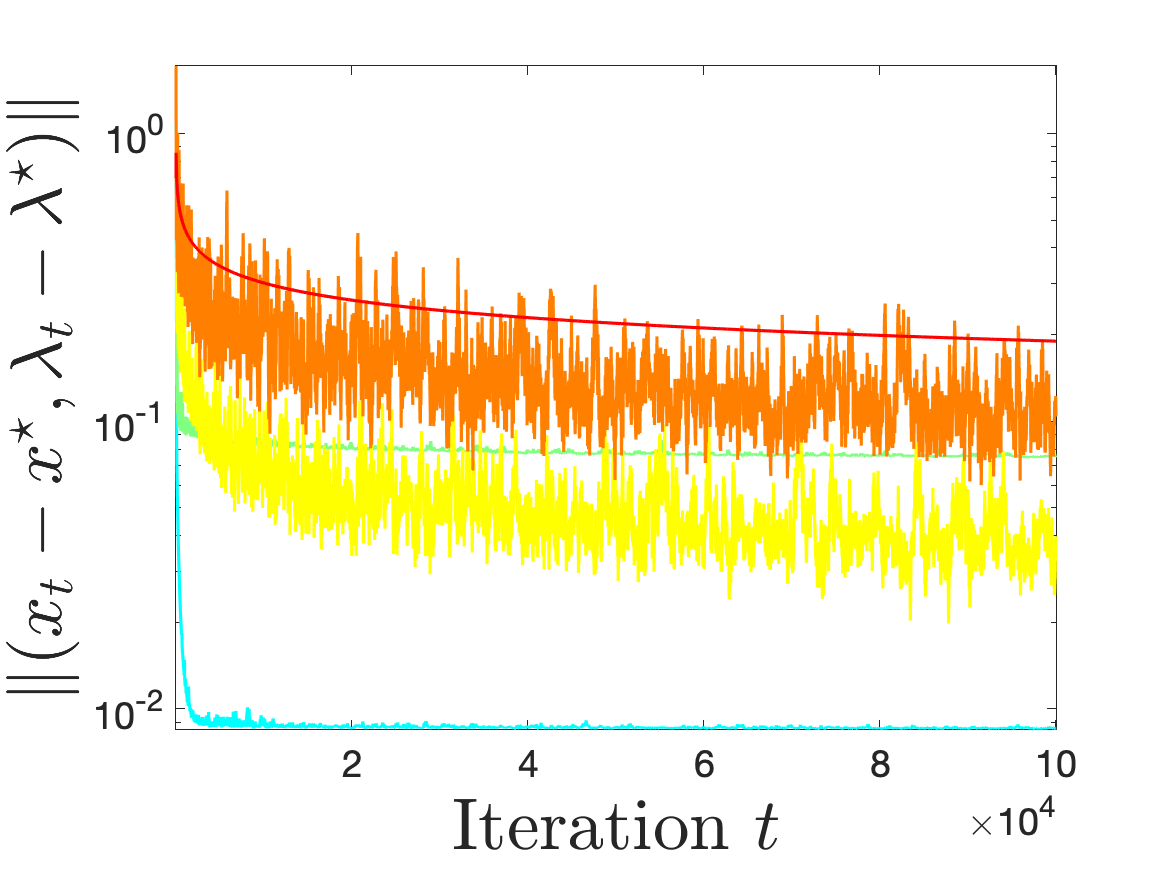}}
\subfigure[Hessian error]{\label{C23}\includegraphics[width=0.32\textwidth]{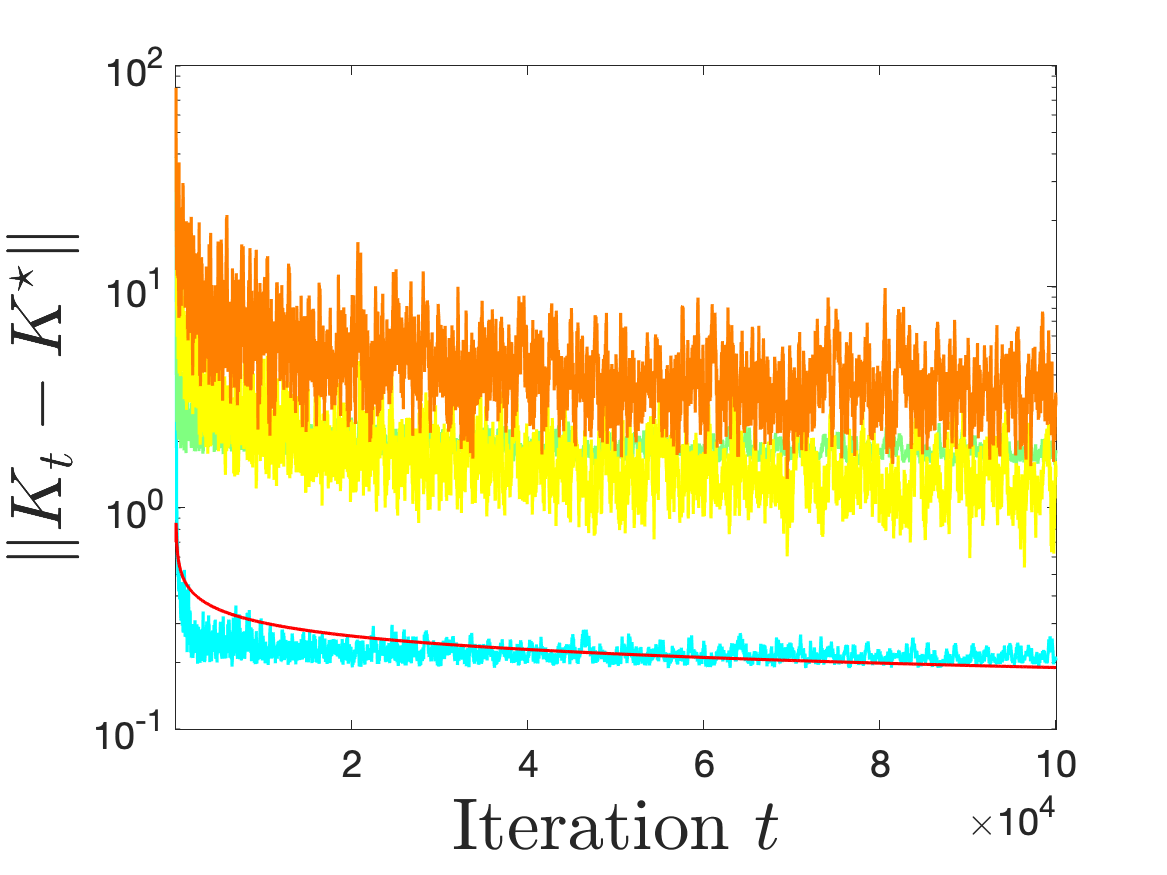}}
\vskip5pt
\centering{Problem \texttt{ORTHREGB}}
	
\subfigure[KKT residual]{\label{C31}\includegraphics[width=0.32\textwidth]{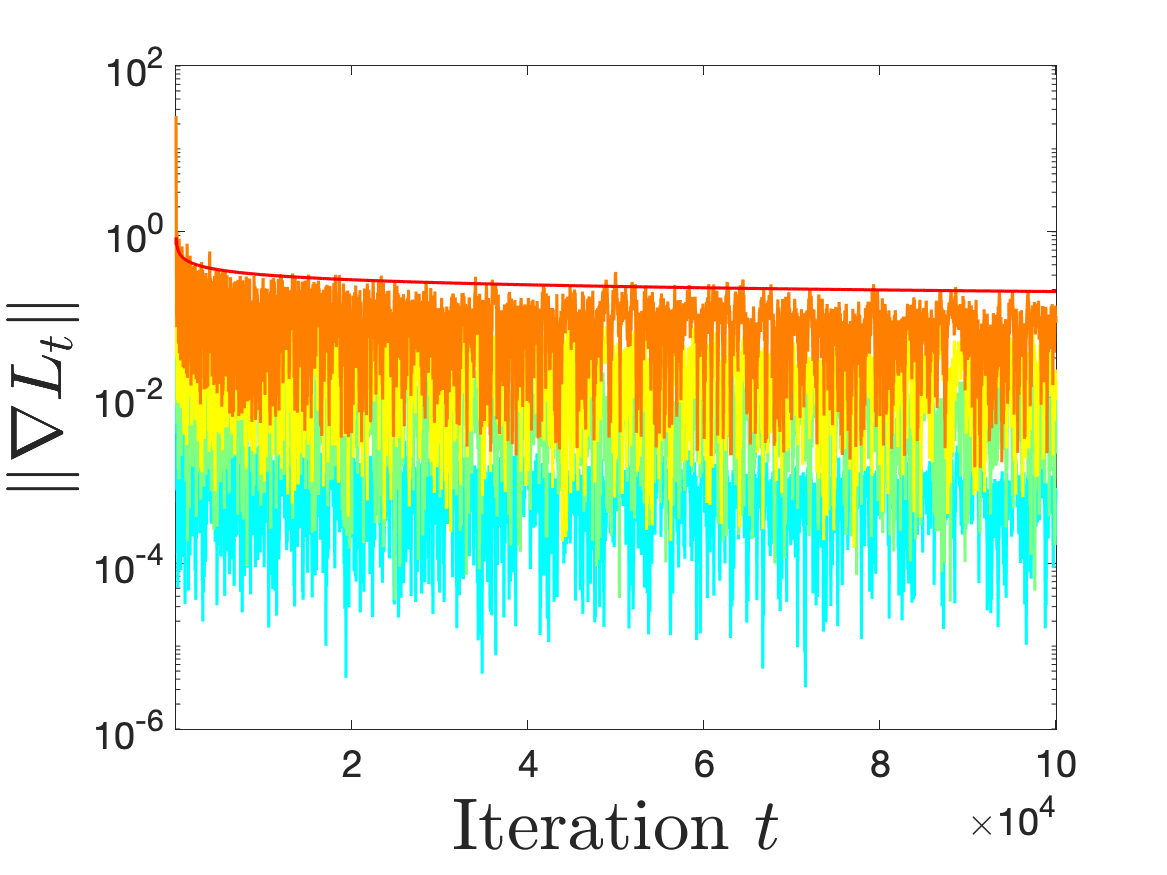}}
\subfigure[Iteration error]{\label{C32}\includegraphics[width=0.32\textwidth]{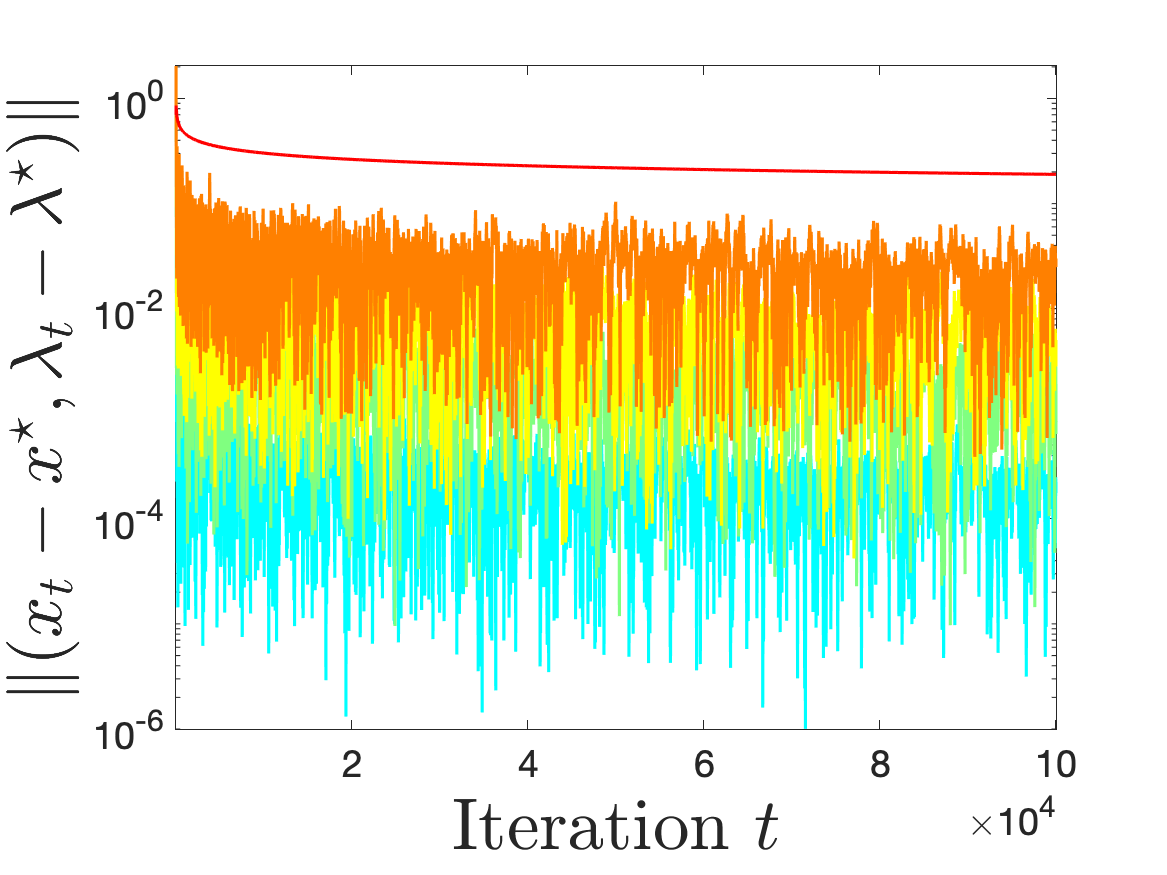}}
\subfigure[Hessian error]{\label{C33}\includegraphics[width=0.32\textwidth]{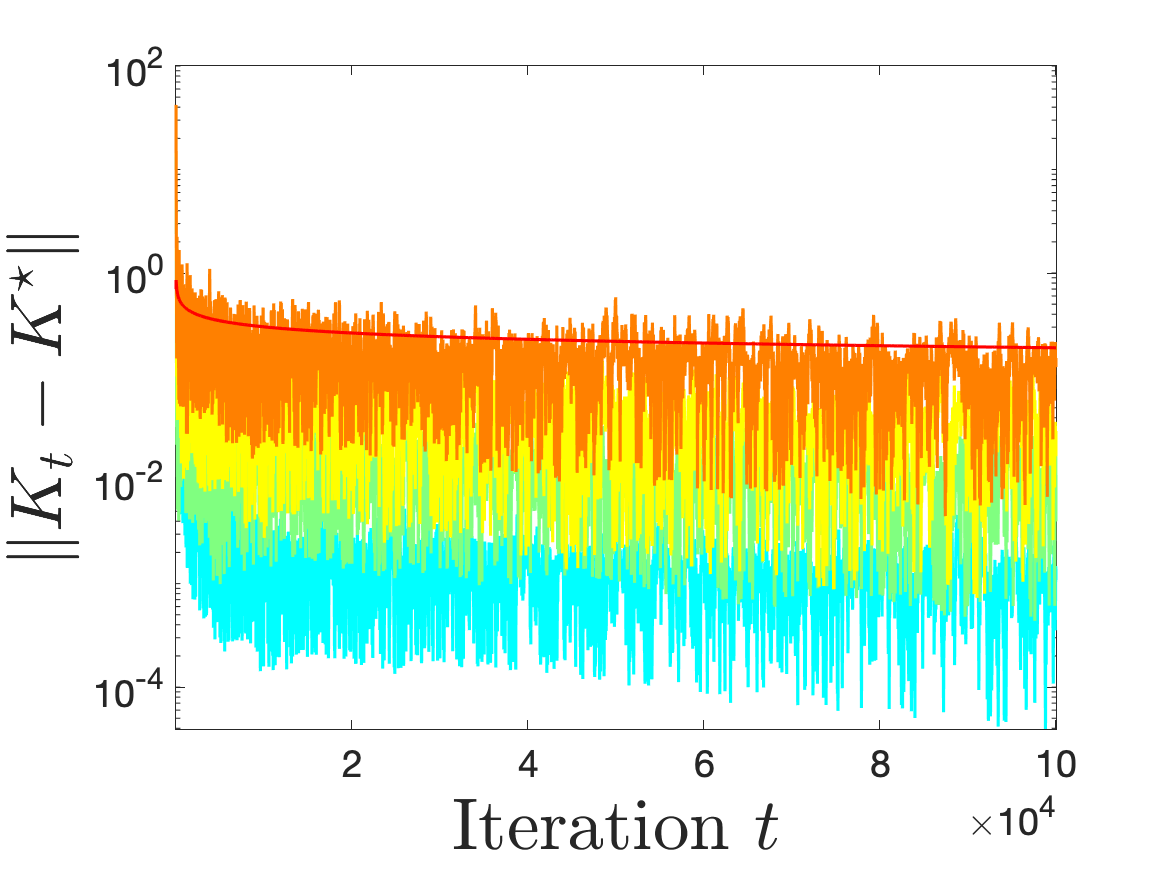}}
\vskip5pt
\centering{Problem \texttt{HS7}}
\vskip10pt
\includegraphics[width=0.6\textwidth]{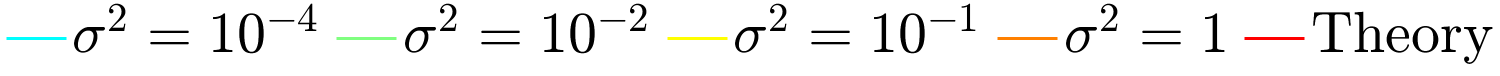}
\caption{\hskip-2pt\textit{Convergence plots of CUTEst problems.~Each row corresponds to one problem~and~has three~\mbox{figures}~in~the~$\log$~scale.~From~the~left~to~the~right,~they~\mbox{correspond}~to~$\|\nabla\mL_t\|$~v.s.~$t$,~$\|(\bx_t-\bx^\star, \blambda_t-\blambda^\star)\|$ v.s. $t$, and $\|K_t-K^\star\|$ v.s. $t$. Each figure has five lines; four lines correspond to four setups of $\sigma^2$, and the red line corresponds to $\sqrt{\beta_t\log(1/\beta_t)}$ v.s. $t$, which is the theoretical asymptotic rate.} }\label{fig:1}
\end{figure}

\begin{figure}[!htp]
\centering     
\subfigure[KKT residual]{\label{C41}\includegraphics[width=0.32\textwidth]{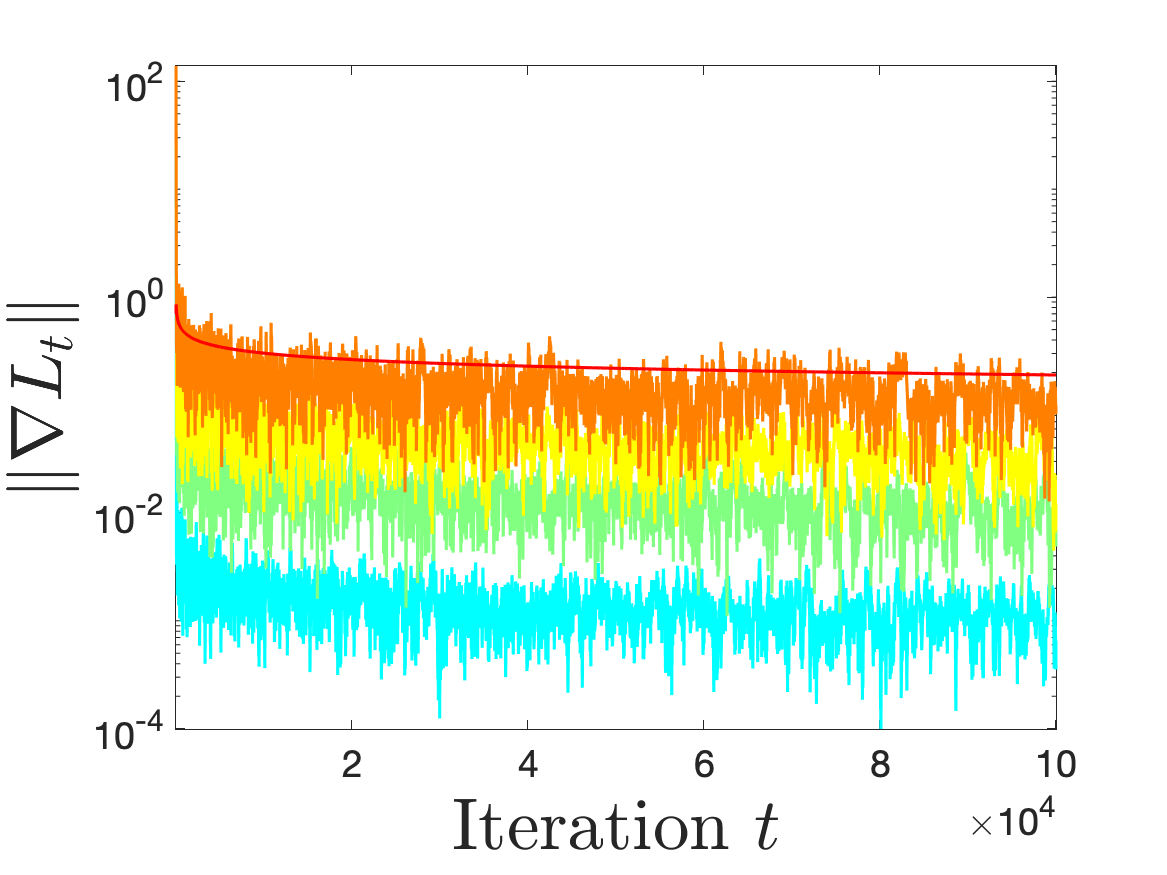}}
\subfigure[Iteration error]{\label{C42}\includegraphics[width=0.32\textwidth]{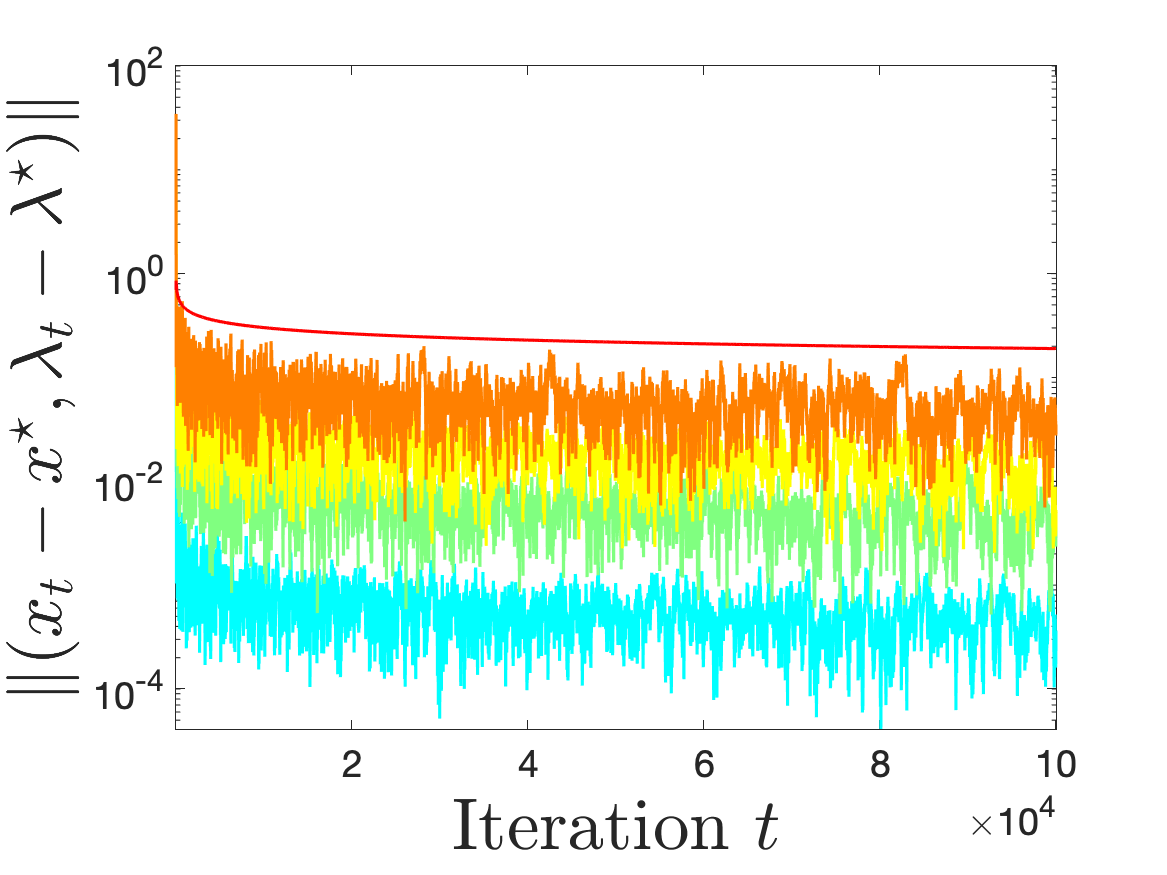}}
\subfigure[Hessian error]{\label{C43}\includegraphics[width=0.32\textwidth]{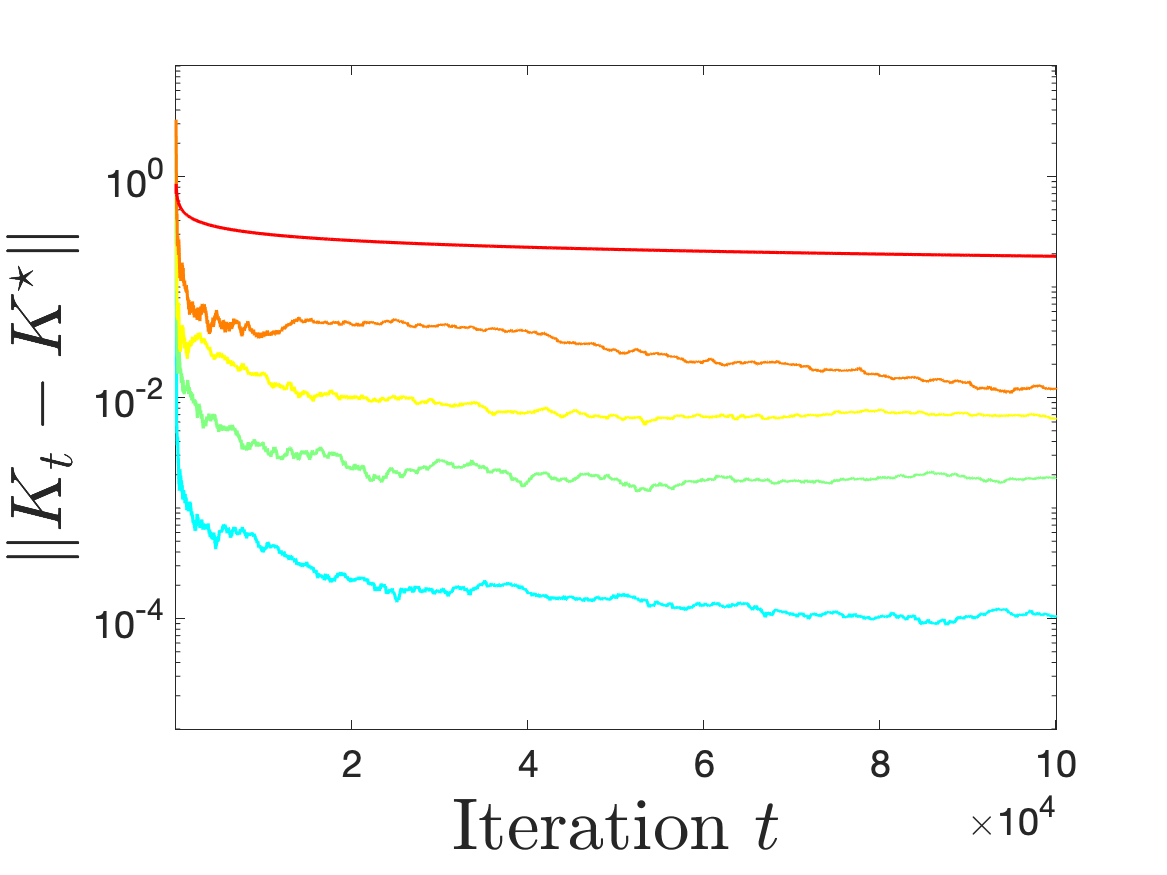}}
\vskip5pt
\centering{Problem \texttt{HS48}}

\subfigure[KKT residual]{\label{C51}\includegraphics[width=0.32\textwidth]{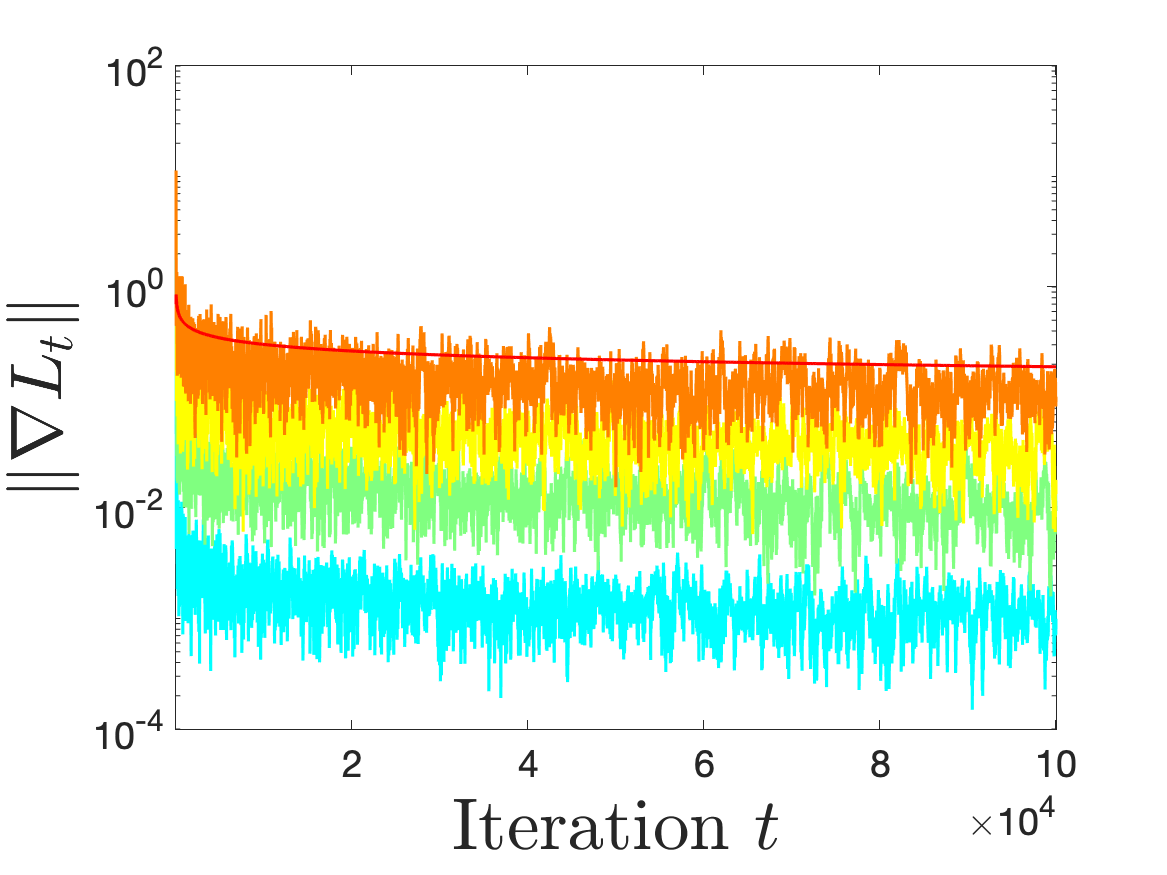}}
\subfigure[Iteration error]{\label{C52}\includegraphics[width=0.32\textwidth]{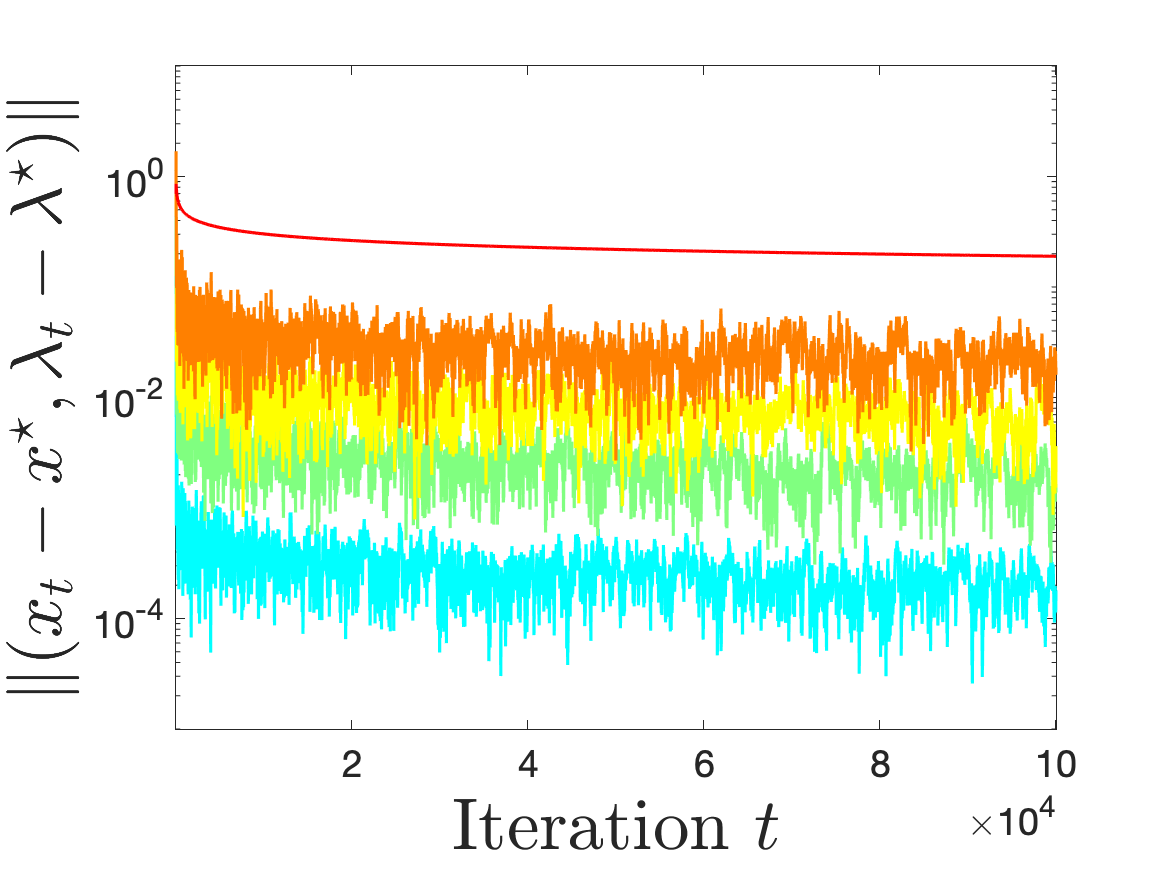}}
\subfigure[Hessian error]{\label{C53}\includegraphics[width=0.32\textwidth]{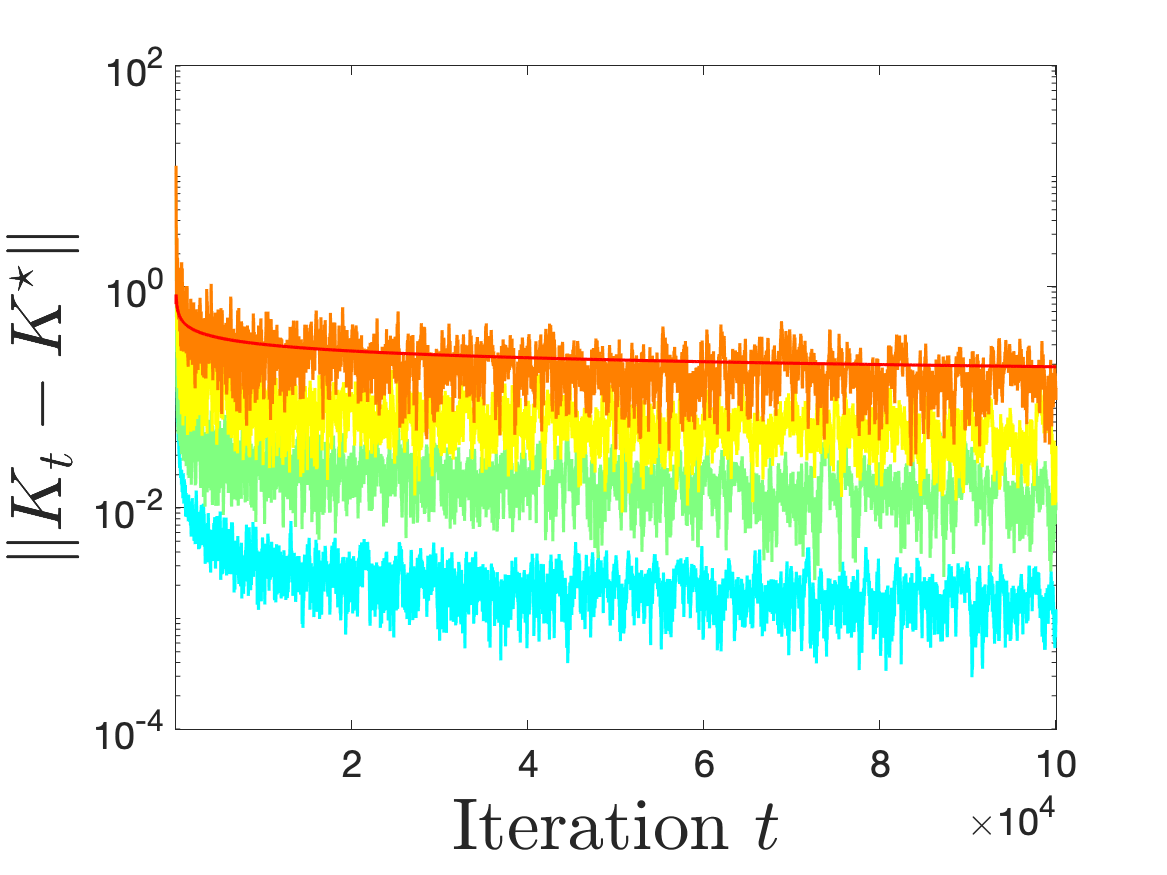}}
\vskip5pt
\centering{Problem \texttt{HS78}}

\subfigure[KKT residual]{\label{C51}\includegraphics[width=0.32\textwidth]{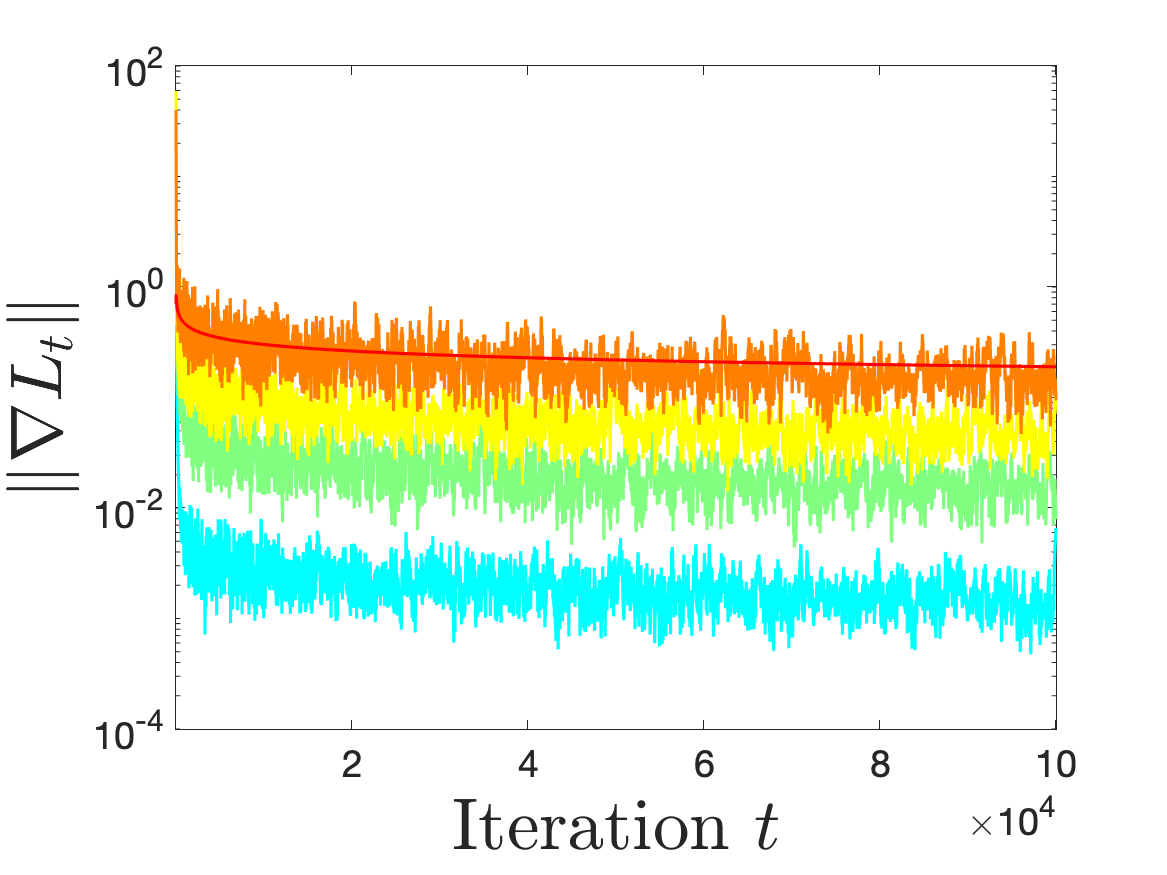}}
\subfigure[Iteration error]{\label{C52}\includegraphics[width=0.32\textwidth]{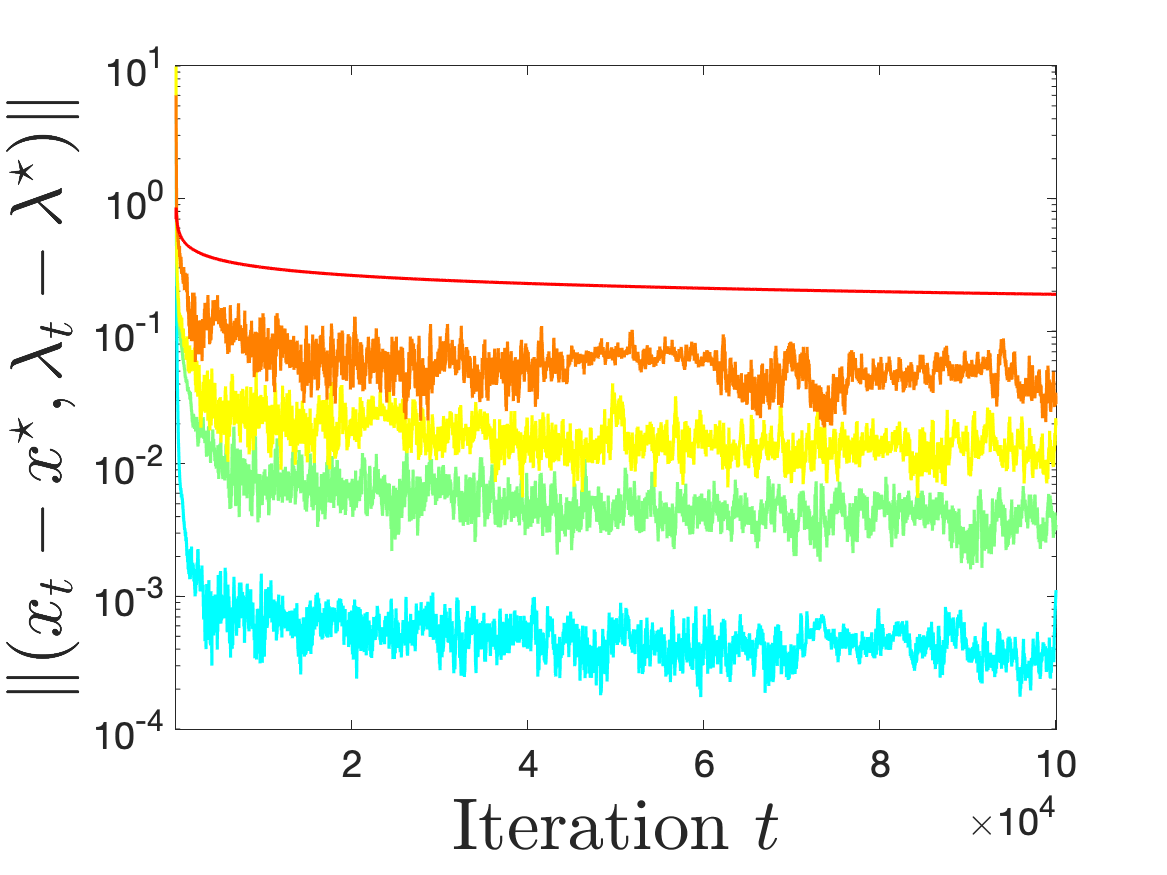}}
\subfigure[Hessian error]{\label{C53}\includegraphics[width=0.32\textwidth]{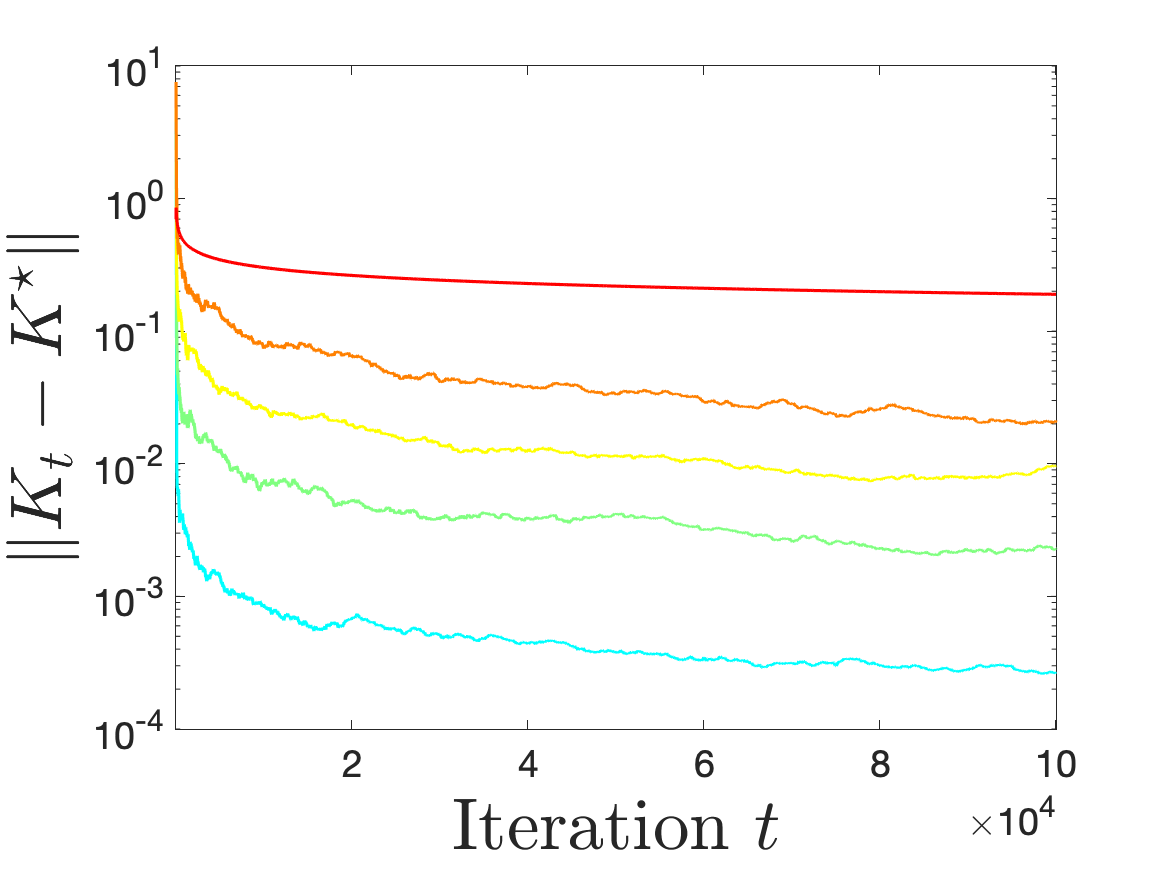}}
\vskip5pt
\centering{Problem \texttt{GENHS28}}
\vskip10pt
\includegraphics[width=0.6\textwidth]{Figure/CUTEst/legend}
\caption{\textit{Convergence plots of CUTEst problems. See Figure \ref{fig:1} for the interpretation.}}\label{fig:2}
\end{figure}

\subsection{Constrained regression problems}\label{sec:exp:more:2}

We follow the experiments in Section \ref{sec:5.2}, and provide comprehensive comparisons between~inexact and exact second-order methods on linearly/nonlinearly constrained regression problems. The coefficient matrix $A$ in linear constraints is sampled from standard normal; the objective of logistic loss is regularized by a quadratic penalty term with a unit~\mbox{parameter}.~We~vary~the~parameters
$d, r$~and $\tau$. In particular, we let $d\in\{5,20,40,60\}$, $r\in\{0.4,0.5,0.6\}$ for Toeplitz~$\Sigma_a$ and $r\in\{0.1,0.2,0.3\}$ for Equi-correlation $\Sigma_a$, and~$\tau \in\{\infty,20,40,60\}$.~We~\mbox{mention}~that~$\tau=\infty$ corresponds to the exact method. For each setup, we perform 200 independent runs.$\hskip1cm$

The extensive comparison results of offline $M$-estimation and StoSQP with different $\tau$~are reported in Tables \ref{tab:4}-\ref{tab:11}. {Specifically, Tables \ref{tab:4} and \ref{tab:5} summarize the results of linear~model~+ linear constraints; Tables \ref{tab:6} and \ref{tab:7} summarize the results of linear model + nonlinear constraints; Tables \ref{tab:8} and~\ref{tab:9} summarize the results of logistic model + \mbox{linear}~constraints;~while~Tables \ref{tab:10} and \ref{tab:11} summarize the results of  logistic model + nonlinear constraints.~For all~four cases, we have the following observations.}

{ 

For MAE, we observe that $M$-estimation achieves results that are an order of magnitude smaller than those of the StoSQP methods.~Among the different setups of $\tau$ of StoSQP,~we~find that exact StoSQP $(\tau=\infty)$ generally yields smaller MAE compared~to~\mbox{inexact} StoSQP.~Furthermore, a larger $\tau$ (i.e., more sketching steps) in StoSQP tends to result in smaller~MAE,~although the differences are less evident than those observed between StoSQP and $M$-estimation methods.~This trend is robust across different setups of the design covariance $\Sigma_a$.~For~instance, for $d=40$ in Table \ref{tab:5}, StoSQP with $\tau=20$ achieves an MAE of approximately~0.2–0.25, while StoSQP with $\tau=40, 60, \infty$ achieves an MAE of less than 0.1. This observation suggests that, given problem parameters such as the condition number of the Lagrangian Hessian and problem dimension, $\tau=20$ may be~\mbox{insufficient}~for~\mbox{solving}~\mbox{Newton}~\mbox{systems}~in~this~\mbox{scenario}.~Specifically, solving exact Newton systems requires $O(46^3) = O(97,336)$ flops while a \mbox{sketching}~solver with $\tau=20$ requires only $O(20 \times 46) = O(920)$ flops.~This substantial reduction in computational cost can result in insufficient precision for the Newton direction at each step,~leading~to a larger MAE when the sample size is fixed.

For coverage rate, we observe that StoSQP with different $\tau$ generally achieves a valid~coverage rate that is very close to the nominal rate 95\%, matching the performance of offline~$M$-estimation. There are two potential exceptions.
The first exception occurs when $d=5$, where 
StoSQP exhibits undercoverage with rates ranging from 87\%-92\%~(cf.~\mbox{Tables}~\ref{tab:4},~\ref{tab:8},~and~\ref{tab:10}).~This undercoverage occurs because the condition numbers of the Lagrangian Hessian in these~problems are significantly larger than in other scenarios, despite the small problem dimension~of~$5$. Consequently, the iteration sequence of StoSQP may not have reached stationarity given~the limited sample size.~As noted in \cite{Zhu2021Online}, even SGD-based estimators require~$3\times 10^5$~to $4\times 10^5$ samples to alleviate undercoverage issue in challenging online inference tasks for~unconstrained problems, while we only have $10^5$ samples for constrained~\mbox{problems}.~Furthermore,~applying a sketching solver to such small-scale problems seems an overkill. We recommend using a simple linear system solver for small-scale problems to reduce the additional uncertainty introduced by the sketching solver.
The second exception occurs when $d=40$, where~StoSQP with $\tau=20$ exhibits undercoverage with rates ranging from 82\%-86\%. In contrast, StoSQP with $\tau=40, 60, \infty$ achieves valid coverage in this case~(cf.~Table \ref{tab:5}).~As~\mbox{explained} for~MAE, this undercoverage results from insufficient sketching steps, which not only make the StoSQP iteration sequence noisy but also make the bias of covariance matrix estimation non-negligible. 
For this scenario, slightly increasing the number of sketching steps (e.g., setting $\tau=40$)~can~resolve the undercoverage issue while still preserving computational efficiency compared to~exact methods.

For average length of confidence intervals, we observe that $M$-estimation produces intervals that are an order of magnitude shorter than those of the StoSQP methods.~Among~the~StoSQP methods, the inexact settings $(\tau < \infty)$ yield average lengths very similar to the exact setting $(\tau = \infty)$, indicating that the inexact methods do not lead to overly conservative intervals.~For both Toeplitz and Equi-correlation $\Sigma_a$, the length of the confidence intervals remains largely unchanged across different setups of $r$. Moreover, for both linear and logistic models, nonlinear constraints tend to result in wider confidence intervals for both offline and online methods,~as shown by comparisons of Tables~\ref{tab:4},~\ref{tab:5} with Tables \ref{tab:6}, \ref{tab:7} and Tables \ref{tab:8}, \ref{tab:9} with Tables \ref{tab:10}, \ref{tab:11}.$\hskip1cm$

For computational flops per iteration, we observe that online StoSQP methods are~significantly more efficient than the offline method. A sketching solver can further reduce~the~computational costs of StoSQP. Choosing the sketching step $\tau$ involves a trade-off between~computational and statistical efficiency. As shown in Table \ref{tab:5}, when $d=40$, $\tau=20$ requires~fewer~flops but achieves larger MAE and lower coverage rates, whereas $\tau=60$ requires more flops~(though still fewer than those of $M$-estimation and $\tau=\infty$) but achieves lower MAE and better~coverage rates. In this case, $\tau=40$ strikes a better balance between the two aspects. We~should~also mention that for small-scale problems $(d=5)$, using a sketching solver with a large $\tau$ is~counterproductive, as it may cause the flops to exceed those of the exact solver.$\hskip3cm$

}

\newpage

\begin{table}[thbp!]
\centering
\miniscule{\resizebox{\linewidth}{!}{
}}
\vspace{-0.2cm}
\caption{\textit{Comparison results of online StoSQP and offline $M$-estimation for constrained~regression problems \textbf{(logistic model + nonlinear constraints)}.}}\label{tab:11}
\vspace{-0.2cm}
\end{table}

\end{document}